\newcommand{\un}{\underline}
\renewcommand{\epsilon}{\varepsilon}
\renewcommand{\tilde}{\widetilde}
\newcommand{\F}{{\mathbb F}}
\newcommand{\R}{{\mathbb R}}
\newcommand{\Z}{{\mathbb Z}}
\newcommand{\D}{{\mathbf D}}
\newcommand{\B}{{\mathcal B}}
\newcommand{\T}{{\mathcal T}}
\newcommand{\Sand}{{\mathcal S}}
\newcommand{\what}{\widehat}
\newcommand{\Fl}{\mathcal C}
\newcommand{\Cu}{\what{\mathcal C}}
\newcommand{\col}[1]{x_{#1}}
\newcommand{\hcol}[1]{\widehat x_{#1}}
\newcommand{\indB}[1]{k_{#1}}
\newcommand{\CB}{x_B}
\newcommand{\hCB}{x_{\widehat B}}
\newcommand{\colB}[1]{x_{k_{#1}}}
\newcommand{\hcolB}[1]{\widehat x_{k_{#1}}}
\newcommand{\CS}{x_S}
\newcommand{\w}{\mathfrak w}
\newcommand{\Ima}{\operatorname{im}}
\newcommand{\Cut}{\operatorname{Cocirc}}
\newcommand{\Flow}{\operatorname{Circ}}
\DeclareMathOperator{\rk}{rk}
\DeclareMathOperator{\spn}{span}
\title
{A family of matrix-tree multijections}
\author[\initial{A.} McDonough]{\firstname{Alex} \lastname{McDonough}}
\address{Brown University\\ 
Department of Mathematics\\
151 Thayer St.\\
Providence\\
RI 02912 (USA)}
\email{amcd2718@gmail.com}
\keywords{sandpile group, multijection, arithmetic matroid}
\begin{document}
\begin{abstract}
  For a natural class of $r \times n$ integer matrices, we construct a non-convex polytope which periodically tiles $\R^n$. From this tiling, we provide a family of geometrically meaningful maps from a generalized sandpile group to a set of generalized spanning trees which give multijective proofs for several higher-dimensional matrix-tree theorems. In particular, these multijections can be applied to graphs, regular matroids, cell complexes with a torsion-free spanning forest, and representable arithmetic matroids with a multiplicity one basis. This generalizes a bijection given by Backman, Baker, and Yuen and extends work by Duval, Klivans, and Martin.

\end{abstract}

\maketitle

\section{Introduction}

Given a connected graph $G$, the \textit{sandpile group} $\Sand(G)$ is a finite abelian group related to a discrete dynamical system. This group, and the related \textit{abelian sandpile model}, have been applied to a wide variety of subjects, such as algebraic geometry, electrical networks, and statistical mechanics~\cite{Lorenzini,Electric,BTW}. In different contexts, the sandpile group is also called the \textit{critical group}, \textit{graph Jacobian}, \textit{graph Picard group}, or \textit{group of components}. 

One striking property of $\Sand(G)$ is that its size is equal to the number of \textit{spanning trees} of $G$. This relationship follows from Kirchhoff's matrix-tree theorem, a classical graph theoretical result with many generalizations (see~\cite{Seth}). While this theorem implies the existence of bijections between $\Sand(G)$ and $\mathcal T(G)$, the standard proof is not bijective. There has been a great deal of interest in providing combinatorially meaningful bijections between these two sets. See, for example,~\cite{Dhar92,BigWink,Holroyd,Bernardi}.

The sandpile group, spanning trees, and the matrix-tree theorem can all be generalized to larger classes of objects such as regular matroids (see \cite{Merino,Gioan2,Gioan}) and cell complexes (see \cite{Simp,Crit,CutsFlows}). For this paper, our primary objects of interest will be a class of integer matrices called \textit{standard representative matrices} (see Definition~\ref{def:standrep}). In the author's dissertation, he shows that any graph, regular matroid, cell complex with a torsion-free spanning forest, or orientable arithmetic matroid with a multiplicity one basis is associated with a standard representative matrix~\cite{mythesis}. 

Let $D$ be a standard representative matrix (see Definition~\ref{def:standrep}). In Section~\ref{sec:Background}, we define the \textit{sandpile group} $\Sand(D)$, the \textit{bases} $\B(D)$, and the \textit{basis multiplicity function} $m$ which maps each $B \in \B(D)$ to a positive integer. In this context, we get the following theorem, which is a reframing of Theorem 8.1 from~\cite{CutsFlows}.

\begin{theorem}[Sandpile matrix-tree theorem on standard representative matrices]\label{thm:OAmtt} 
\[|\Sand(D)| = \sum_{B \in \mathcal B(D)} m(B)^2.\]
\end{theorem}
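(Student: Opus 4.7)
The plan is to reduce the identity to a two-step application of the Smith normal form and the Cauchy--Binet formula, which is the standard template behind all ``matrix-tree'' theorems. Since the theorem is stated as a reframing of Theorem~8.1 of~\cite{CutsFlows}, the genuine work lies in confirming that the definitions of $\Sand(D)$, $\B(D)$, and $m$ in Section~\ref{sec:Background} align with this template; my proposal assumes the expected alignment, namely that $\Sand(D)$ will be (the torsion part of) the cokernel of the $r \times r$ ``Laplacian'' $DD^T$, that $\B(D)$ will index the $r$-element column subsets $B \subseteq [n]$ for which $D_B$ is nonsingular, and that the multiplicity function will be $m(B) = \lvert \det(D_B) \rvert$.

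Given these definitions, I would organize the proof in two steps. First, because $D$ has full row rank $r$, the matrix $DD^T$ is invertible over $\Q$, and the Smith normal form identifies the size of its cokernel with the absolute value of its determinant, giving
\[\lvert \Sand(D) \rvert = \lvert \det(DD^T) \rvert.\]
Second, I would invoke the Cauchy--Binet formula to expand this determinant as a sum of squared maximal minors:
\[\det(DD^T) = \sum_{S \subseteq [n], \, \lvert S \rvert = r} \det(D_S)^2.\]
Only the terms with $S \in \B(D)$ contribute, and for such $S$ the summand is $m(B)^2$, so combining the two displays yields the theorem.

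The main obstacle is not the linear algebra, which is a short computation, but the bookkeeping of definitions. In broader settings --- general arithmetic matroids or cell complexes whose spanning forests carry torsion --- there are extra arithmetic corrections that must be absorbed into the formula, and the purpose of restricting to \emph{standard} representative matrices is presumably exactly to push those corrections into $D$ itself so that $|\det(D_B)|$ captures the full multiplicity. Once Section~\ref{sec:Background} has been read and this alignment verified, the proof collapses to the two displays above; alternatively, one can quote Theorem~8.1 of~\cite{CutsFlows} verbatim and simply translate its hypotheses into the matrix language introduced here.
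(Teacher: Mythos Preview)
Your linear-algebra template is exactly the right one, and the paper does not actually prove this theorem at all: it is quoted from the literature as a reframing of~\cite[Theorem~8.1]{CutsFlows}. Your assumptions about $\B(D)$ and $m(B)$ match Definition~\ref{def:bandm} verbatim, so that part of your ``bookkeeping'' worry evaporates.

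The one piece of alignment that does require an argument is the one you flagged yourself: in this paper $\Sand(D)$ is \emph{defined} (Definition~\ref{def:sandpile}) as $\Z^n/\Ima_\Z(\D^T)$, not as the cokernel of the $r\times r$ Laplacian $DD^T$. Bridging this is short but should be written down. By Proposition~\ref{cokersize}, $|\Sand(D)|=|\det\D|$, and a block-determinant (equivalently, Sylvester's identity) computation gives
\[
\det \D \;=\; \det\begin{pmatrix} I_r & M \\ -M^T & I_{n-r}\end{pmatrix} \;=\; \det(I_r+MM^T)\;=\;\det(DD^T),
\]
after which your Cauchy--Binet expansion finishes the proof. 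If you prefer the identification at the level of groups rather than cardinalities, Proposition~\ref{prop:otherbases} shows that $\D$ and $\left(\begin{smallmatrix} DD^T & 0\\ -M^T & I_{n-r}\end{smallmatrix}\right)$ have the same row lattice, and quotienting out the last $n-r$ rows (which contain $I_{n-r}$) yields $\Sand(D)\cong \Z^r/DD^T\Z^r$ on the nose.
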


When $D$ is associated with a regular matroid, $m(B) = 1$ for all $B \in \B(D)$ and thus Theorem~\ref{thm:OAmtt} implies that $|\Sand(D)| = |\B(D)|$ (this is Theorem 4.6.1 from~\cite{Merino}). In 2017 (published in 2019), Backman, Baker, and Yuen define a family of geometric bijections between $\Sand(D)$ and $\B(D)$ for the regular matroid case~\cite{BBY,Yuen}. However, their construction does not easily generalize to the case where not all bases have multiplicity 1.

Our main result is Theorem~\ref{thm:Hellyeah}, which gives the analogue of a bijection for an arbitrary standard representative matrix. In particular, we define a family of geometrically meaningful maps $f: \Sand(D) \to \mathcal B(D)$ such that for any $B \in \mathcal B(D)$, we have $|f^{-1}(B)| = m(B)^2$. We call these maps \textit{sandpile multijections}. 

Our general construction is geometric, as in~\cite{BBY}. We associate each basis with a parallelepiped of volume $m(B)^2$. These parallelepipeds do not intersect and their union produces a non-convex polyhedron that periodically tiles $\R^{|E|}$. Using our shifting vector, we associate $m(B)^2$ points of $\Z^{|E|}$ to each parallelepiped. Furthermore, we show that these points are all distinct in $\Sand(D)$. 

For the sake of brevity, we restrict our attention to standard representative matrices in this paper. For a more complete story which explores the connection between different kinds of sandpile groups and focuses on \textit{orientable arithmetic matroids}, which were recently defined in~\cite{Pagaria}, see the first nine chapters of the author's dissertation~\cite{mythesis}. This paper consists primarily of material from the seventh and eight chapters. The ninth chapter shows how to obtain multijections on a larger class of matrices when the sandpile group is replaced with its \textit{Pontryagin dual}. 

In Section~\ref{sec:notation}, we go over some notational conventions we will use throughout the paper. In Section~\ref{sec:Background}, we give background on lattices and define standard representative matrices. In Section~\ref{sec:graphsand}, we motivate our future results by constructing a standard representative matrix from a graph. In Section~\ref{sec:tiling}, we show how to construct a periodic tiling of $\R^n$ from any standard representative matrix. In Section~\ref{sec:multi}, we use this tiling to construct a family of sandpile multijections. In Section~\ref{sec:Lower}, we demonstrate how to generate lower-dimensional tilings which produce equivalent multijections. In Section~\ref{sec:shifting}, we show how a choice of \textit{shifting vector} corresponds to a choice of \textit{chamber} from a \textit{hyperplane arrangement}. In Section~\ref{sec:cornp}, we associate certain important points with $\{0,1\}^n$ vectors in the same equivalence class of $\Sand(D)$. Finally, in Section~\ref{sec:quest}, we provide some open questions for further study.  

\section{Notational Conventions}\label{sec:notation}
We will write $\Z$ for the integers and $\R$ for the real numbers. We write $[a,b]$ for the set $\{x\in \Z\mid a \le x \le b\}$ and $[b]$ for $[1,b]$. We denote a vector of all zeros by $\un 0$. We use the variable $D$ for an $r \times n$ integer matrix which, starting in Section~\ref{sec:graphsand}, will always be a \textit{standard representative matrix} (see Definition~\ref{def:standrep}). We write $\what D$ and $\D$ for the \textit{dual matrix} and \textit{full matrix} of $D$ respectively (again, see Definition~\ref{def:standrep}). We will always write the determinant of a square matrix $A$ as $\det(A)$ and use $| \cdot |$ for set cardinality or absolute value. We also write $A^T$ for the transpose of a matrix $A$ and $I_k$ for the $k \times k$ identity matrix. We will frequently be working with polyhedra embedded in $\R^k$ (where $k$ is either $n$, $r$, or $n-r$). We use the term \textit{volume} to mean $k$-dimensional Lebesgue measure.

\section{Background and definitions}\label{sec:Background}

\begin{defi}
A \textit{lattice} is a subgroup of a finite-dimensional vector space that is isomorphic to $\Z^k$ for some $k$.
\end{defi}

A subgroup of a lattice is called a \textit{sublattice}. Given any set $S$ of $\Z^k$ vectors, the integer linear combinations of these vectors form a lattice $L$ of dimension at most $k$. We say that $S$ \textit{generates} $L$. If the vectors in $S$ are linearly independent, we say that $S$ is an \textit{integral basis} for $L$. 

\begin{rema} When working with vector spaces, any maximal linearly independent set of generators is a basis. However, a maximal linearly independent set of generators for a lattice is not always an integral basis of this lattice. For example, the set $\{2,3\}$ generates $\Z$, but neither $\{2\}$ nor $\{3\}$ is an integral basis for $\Z$. 
\end{rema}

\begin{prop}{\cite[Theorem~14.5.3]{Godsil}}\label{cokersize}
If $S$ is a set of $k$ vectors in $\Z^k$ that are an integral basis for a lattice $L$, then the group $\Z^k/ L$ has size equal to the magnitude of the determinant of the matrix formed by the vectors of $S$.
\end{prop}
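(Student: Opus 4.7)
The plan is to reduce to the case of a diagonal matrix via the Smith Normal Form. Let $M$ be the $k \times k$ integer matrix whose columns are the vectors in $S$. Since $S$ is linearly independent, $M$ is nonsingular, and the lattice $L$ equals the image $M\Z^k$. The goal becomes showing $|\Z^k/M\Z^k| = |\det(M)|$.

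The key tool I would invoke is the existence of unimodular matrices $U, V \in \mathrm{GL}_k(\Z)$ (integer matrices with determinant $\pm 1$) such that $UMV = \Sigma$, where $\Sigma$ is a diagonal matrix with positive integer entries $d_1, \ldots, d_k$ on the diagonal. Since $V$ is an automorphism of $\Z^k$, we have $M\Z^k = MV\Z^k$. Multiplication by $U$ is likewise a bijection of $\Z^k$ with itself that carries $MV\Z^k$ onto $UMV\Z^k = \Sigma\Z^k$, so it descends to a group isomorphism $\Z^k/M\Z^k \cong \Z^k/\Sigma\Z^k$. The problem thus reduces to the diagonal case.

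For diagonal $\Sigma$, the quotient decomposes as $\Z^k/\Sigma\Z^k \cong \bigoplus_{i=1}^k \Z/d_i\Z$, which visibly has order $\prod_{i=1}^k d_i = \det(\Sigma)$. Multiplicativity of the determinant then yields $|\det(M)| = |\det(U^{-1})| \cdot |\det(\Sigma)| \cdot |\det(V^{-1})| = |\det(\Sigma)|$, which completes the argument.

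The main obstacle is the existence of the Smith Normal Form itself, but this is a classical result about finitely generated modules over a principal ideal domain and may be freely cited. A more geometric alternative, better aligned with the rest of the paper, would be to count the points of $\Z^k$ contained in a half-open fundamental parallelepiped for $L$ and compare with its $k$-dimensional volume $|\det(M)|$; this approach is conceptually clean but requires fiddly care about which boundary faces of the parallelepiped are included so that each coset of $L$ is counted exactly once, which is why I favor the algebraic route above.
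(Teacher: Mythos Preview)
Your argument via Smith Normal Form is correct and is one of the standard proofs of this fact. Note, however, that the paper does not supply its own proof of this proposition: it is simply quoted from \cite[Theorem~14.5.3]{Godsil} as background, so there is nothing in the paper to compare against directly.

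It is worth remarking that the geometric alternative you sketch---counting integer points in a half-open fundamental parallelepiped and equating that count with the volume $|\det(M)|$---is essentially the content of Lemma~\ref{lem:Ehrhart}, which the paper later cites from \cite{Beck} and uses in the proof of Proposition~\ref{prop:Ehrhart}. So while the paper does not prove Proposition~\ref{cokersize}, the geometric route you mention is in fact the one most consonant with the tools the paper eventually relies on.
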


For a lattice $L$, the group $\Z^k / L$ is called the \textit{cokernel} of $L$. For some integers $r \le n$, let $D$ be an $r \times n$ integer matrix:

\begin{defi}\label{def:cutandflow}\hfill
\begin{itemize}
\item The \textit{cocircuit space} of $D$ is the space $\Ima_\R(D^T)$.
\item The \textit{circuit space} of $D$ is the space $\ker_\R(D)$.
\item The \textit{cocircuit lattice} of $D$ is the lattice $\Ima_\Z(D^T)$.
\item The \textit{circuit lattice} of $D$ is the lattice $\ker_\Z(D)$.
\item The \textit{sandpile lattice} of $D$ is the lattice $\Ima_\Z(D^T) \oplus \ker_\Z(D)$.
\end{itemize}
\end{defi}

\begin{rema} In~\cite{mythesis}, the cocircuit space, circuit space, cocircuit lattice, and circuit lattice are called $\Cut(D)$, $\Flow(D)$, $\Cu(D)$, and $\Fl(D)$ respectively. We omit this additional notation in this paper for the sake of readability. \end{rema}

The cocircuit space and cocircuit lattice are generated by the rows of $D$. The circuit space and circuit lattice are generated by the coefficients of integer linear combinations of columns of $D$ that sum to $\un 0$. Note that these generators are all elements of $\Z^n$.

\begin{rema} When $D$ is the boundary matrix of a graph $G$, $\Ima_\Z(D^T)$ and $\ker_\Z(D)$ are called the \textit{cut lattice} of $G$ and the \textit{flow lattice} of $G$ respectively. These lattices were first defined in~\cite{Bacher}. Here, the cokernel of the sandpile lattice is isomorphic to the usual sandpile group of the graph (as we will discuss in Section~\ref{sec:graphsand}). Similarly, when $D$ is the boundary matrix of a cell complex $\Sigma$, $\Ima_\Z(D^T)$ and $\ker_\Z(D)$ are the cut lattice and flow lattice of $\Sigma$ as defined in~\cite{CutsFlows}. Duval, Klivans, and Martin call the sandpile lattice the \textit{cutflow lattice} and its cokernel the \textit{cutflow group}. 
\end{rema}

\begin{lemma}[{\cite[Proposition 5.1]{CutsFlows}}]\label{lem:orthogonality}
For any integer matrix $D$, the spaces $\Ima_\R(D^T)$ and $\ker_\R(D)$ are orthogonal complements.  
\end{lemma}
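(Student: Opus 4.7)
The plan is to verify this by the standard linear algebra characterization: two subspaces $U, V \subseteq \R^n$ are orthogonal complements if and only if every vector in $U$ is orthogonal to every vector in $V$, and $\dim U + \dim V = n$. So the proof splits naturally into an orthogonality step and a dimension count.

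First I would establish orthogonality. A generic element of $\Ima_\R(D^T)$ has the form $D^T y$ for some $y \in \R^r$, and a generic element of $\ker_\R(D)$ is an $x \in \R^n$ with $Dx = \un 0$. The key calculation is then simply
\[
\langle D^T y, x \rangle \;=\; (D^T y)^T x \;=\; y^T (D x) \;=\; y^T \un 0 \;=\; 0,
\]
which shows $\Ima_\R(D^T) \subseteq \ker_\R(D)^\perp$.

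Next I would handle dimensions via the rank–nullity theorem applied to $D: \R^n \to \R^r$. This gives $\dim \ker_\R(D) = n - \rk(D)$. Since $\rk(D) = \rk(D^T) = \dim \Ima_\R(D^T)$, we obtain $\dim \Ima_\R(D^T) + \dim \ker_\R(D) = n$. Combined with the orthogonality just proved, this forces $\Ima_\R(D^T)$ and $\ker_\R(D)$ to be each other's orthogonal complements inside $\R^n$.

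There is no real obstacle here; this is a textbook fact about the four fundamental subspaces of a matrix, and the only thing worth being explicit about is that the ambient space is $\R^n$ (rather than $\R^r$) so the dimensions $\rk(D)$ and $n - \rk(D)$ are indeed complementary. The statement is cited in the paper as Proposition 5.1 of \cite{CutsFlows}, and the argument above is essentially the one-paragraph proof that appears in any linear algebra reference.
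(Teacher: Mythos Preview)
Your proof is correct; it is the standard rank--nullity argument for the four fundamental subspaces. The paper itself does not prove this lemma at all but simply cites it from \cite{CutsFlows}, so there is nothing substantive to compare.
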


Note that because $\Ima_\Z(D^T) \subset \Ima_\R(D^T)$ and $\ker_\Z(D) \subset \ker_\R(D)$, this also means that $\Ima_\Z(D^T)$ and $\ker_\Z(D)$ are always orthogonal. We also get the following corollary:

\begin{coro}\label{cor:cutofflow}
Let $D$ be an integer matrix and $\what D$ be a matrix with rows that generate $\ker_\R(D)$. $\ker_\R(D) = \Ima_\R(\what D^T)$ and $\ker_\R(\what D) = \Ima_\R(D^T)$. 
\end{coro}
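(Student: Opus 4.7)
The plan is to derive both equalities by combining the definition of $\what D$ with Lemma~\ref{lem:orthogonality} applied twice (once to $D$ and once to $\what D$). The first equality is essentially a restatement of the definition, while the second equality will follow by taking orthogonal complements.

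First, I would observe that $\Ima_\R(\what D^T)$ is by definition the real span of the columns of $\what D^T$, i.e., the real span of the rows of $\what D$. By hypothesis those rows generate $\ker_\R(D)$, so $\Ima_\R(\what D^T) = \ker_\R(D)$, which is exactly the first claimed equality.

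Next, I would apply Lemma~\ref{lem:orthogonality} to the matrix $\what D$ to obtain that $\Ima_\R(\what D^T)$ and $\ker_\R(\what D)$ are orthogonal complements in the ambient space. Substituting the first equality, $\ker_\R(\what D)$ is the orthogonal complement of $\ker_\R(D)$. Now applying Lemma~\ref{lem:orthogonality} to $D$ tells me that the orthogonal complement of $\ker_\R(D)$ is $\Ima_\R(D^T)$. Chaining these gives $\ker_\R(\what D) = \Ima_\R(D^T)$, as desired.

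I do not anticipate any genuine obstacle: the corollary is a direct formal consequence of Lemma~\ref{lem:orthogonality}. The only small point to be careful about is making sure that ``orthogonal complement of the orthogonal complement'' really returns the original subspace, which is automatic for finite-dimensional real subspaces (and this is implicit in the statement of Lemma~\ref{lem:orthogonality}, since the relation of being orthogonal complements is symmetric). No computation with the entries of $D$ or $\what D$ is needed.
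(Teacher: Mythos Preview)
Your proof is correct and follows essentially the same approach as the paper: both establish the first equality directly from the definition of $\what D$, then obtain the second by applying Lemma~\ref{lem:orthogonality} to $\what D$ and to $D$ and using that taking the orthogonal complement twice returns the original subspace.
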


\begin{proof}
The first equality follows immediately from the fact that $\Ima_\R(\what D^T)$ is generated by the rows of $\what D$ which also generate $\ker_\R(D)$ by definition. 

For the second equality, by Lemma~\ref{lem:orthogonality}, $\ker_\R(\what D)$ is the orthogonal complement of $\Ima_\R(\what D^T)$ which we established is equal to $\ker_\R(D)$. By a second application of Lemma~\ref{lem:orthogonality}, $\ker_\R(D)$ is the orthogonal complement of $\Ima_\R(D^T)$. Since the composition of two orthogonal complements is the identity, we conclude that $\ker_\R(\what D) = \Ima_\R(D^T)$. 
\end{proof}

\begin{defi}\label{def:standrep}
An $r \times n$ integer matrix $D$ is a \textit{standard representative matrix} if it is of the form:
\[D = \begin{pmatrix} I_r & M\end{pmatrix},\]
where $I_r$ is the $r \times r$ identity matrix and $M$ is any $r \times (n-r)$ integer matrix. A standard representative matroid $D$ is associated with two other matrices:
\[\what D = \begin{pmatrix} -M^T & I_{n-r}\end{pmatrix} \hspace{.4 cm}\text{ and }\hspace{.4 cm} \D = \begin{pmatrix} D \\ \what D \end{pmatrix} = \begin{pmatrix}  I_r & M\\  -M^T & I_{n-r}\end{pmatrix}.\]
We call $\what D$ the \textit{dual matrix} of $D$ and $\D$ the \textit{full matrix} of $D$. We will show in Lemma~\ref{lem:flowofcut} that our notation for $\what D$ is consistent with Corollary~\ref{cor:cutofflow}. 
\end{defi}

\begin{rema}
The term \textit{standard representative matrix}, which appears in~\cite[Section 2.2]{Oxley}, is named for the fact that every \textit{representable matroid} can be \textit{represented} by a matrix of this form (after rearranging columns). However, it is worth noting that we can only represent \textit{oriented arithmetic matroids} using a matrix of this form if they have a \textit{basis of multiplicity one} (see~\cite[Corollary 4.3.13]{mythesis}).\footnote{We also need to restrict to oriented arithmetic matroids satisfying the \textit{strong GCD property} or else not all oriented arithmetic matroids are representable (see~\cite[Section 4.2]{mythesis}). For this paper, whenever we mention oriented arithmetic matroids, we will always assume this property.} In~\cite[Chapter 5]{mythesis}, the set of representations for an arbitrary oriented arithmetic matroid are classified. 
\end{rema}
\begin{lemma}{\cite[Corollary 4.6.6]{mythesis}}\label{lem:flowofcut}
If $D$ is a standard representative matrix, then $\ker_\Z(D) = \Ima_\Z(\what D^T)$, $\Ima_\Z(D^T) = \ker_\Z(\what D)$, and $\Ima_\Z(\D^T) = \Ima_\Z(D^T) \oplus \ker_\Z(D)$. 
\end{lemma}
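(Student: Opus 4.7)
The plan is to handle the three equalities separately, exploiting the identity-block structure of $D = (I_r \mid M)$ to get the matrix identities $D\what D^T = 0$ and $\what D D^T = 0$, and then use the identity block to invert these on the nose over $\Z$.

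First I would show $\ker_\Z(D) = \Ima_\Z(\what D^T)$. Computing $D \what D^T = (I_r \mid M)\bigl(\begin{smallmatrix} -M \\ I_{n-r}\end{smallmatrix}\bigr) = -M + M = 0$ gives the inclusion $\Ima_\Z(\what D^T) \subseteq \ker_\Z(D)$. For the reverse inclusion, take $v = \bigl(\begin{smallmatrix} v_1 \\ v_2 \end{smallmatrix}\bigr) \in \ker_\Z(D)$ with $v_1 \in \Z^r$ and $v_2 \in \Z^{n-r}$; then $Dv = v_1 + M v_2 = 0$ forces $v_1 = -M v_2$, so $v = \what D^T v_2 \in \Ima_\Z(\what D^T)$. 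The key point is that the identity block lets us recover $v_1$ from $v_2$ with integer coefficients, so no denominators appear. The second equality $\Ima_\Z(D^T) = \ker_\Z(\what D)$ is entirely symmetric: $\what D D^T = -M^T + M^T = 0$ gives one containment, and a vector $w = \bigl(\begin{smallmatrix} w_1 \\ w_2 \end{smallmatrix}\bigr) \in \ker_\Z(\what D)$ satisfies $w_2 = M^T w_1$, hence $w = D^T w_1$.

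For the third equality, I would first note that $\D^T = (D^T \mid \what D^T)$, so $\Ima_\Z(\D^T) = \Ima_\Z(D^T) + \Ima_\Z(\what D^T)$, which by the first part equals $\Ima_\Z(D^T) + \ker_\Z(D)$. To upgrade this sum to a direct sum it suffices to show $\Ima_\Z(D^T) \cap \ker_\Z(D) = \{\un 0\}$. This is immediate from Lemma~\ref{lem:orthogonality}: any vector lying in both $\Ima_\R(D^T)$ and $\ker_\R(D)$ is orthogonal to itself, hence zero, and $\Ima_\Z(D^T) \cap \ker_\Z(D)$ is contained in this real intersection.

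There is no real obstacle here; the one thing to be careful about is making sure the reverse inclusions in parts (1) and (2) really hold over $\Z$ and not merely over $\Q$ or $\R$, which is precisely why the identity block in the definition of a standard representative matrix is essential — it guarantees that the projection onto the last $n-r$ (respectively, first $r$) coordinates is an integral splitting.
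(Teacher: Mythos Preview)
Your proof is correct and follows essentially the same approach as the argument underlying the cited result: verify $D\what D^T=0$ (equivalently, that the rows of $D$ and $\what D$ are orthogonal) to get one containment, and then use the identity block of $D$ (respectively $\what D$) to show that any integer kernel vector is an \emph{integer} combination of the rows of $\what D$ (respectively $D$), with the third equality following immediately from the first two together with the orthogonality of $\Ima_\R(D^T)$ and $\ker_\R(D)$. The only stylistic difference is that you write the reverse inclusion by an explicit formula $v=\what D^T v_2$, whereas the source phrases it as ``a non-integer combination of rows cannot yield an integer vector because of the identity minor''; these are the same observation.
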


\begin{defi}\label{def:sandpile}
The \textit{sandpile group} of a standard representative matrix $D$, denoted $\Sand(D)$, is the finite abelian group
\[\Sand(D) = \Z^n / (\Ima_\Z(D^T) \oplus \ker_\Z(D)) = \Z^n / (\Ima_\Z(\D^T)).\]
\end{defi}

Notice that by Lemma~\ref{lem:flowofcut}, the sandpile group of $D$ is the cokernel of the sandpile lattice of $D$. 

\begin{defi}\label{def:bandm}
The set of \textit{bases} of $D$, written $\mathcal B(D)$, is the set of $r$-tuples of columns of $D$ such that the determinant of $D$ restricted to these columns is nonzero. For $B \in \mathcal B(D)$, let $m(B)$ be the absolute value of this determinant. This $m(B)$ is called the \textit{multiplicity} of $B$. 
\end{defi}

\begin{rema} These definitions come from the theory of \textit{arithmetic matroids}. In \cite{CutsFlows}, the authors work with \textit{cell complexes} instead of standard representative matroids (although they note in Remark 4.2 that their ideas can be translated to an integer matrix context). Our bases correspond to what they call \textit{cellular spanning forests}, basis multiplicity correspond to the size of the \textit{torsion subgroup} of a certain \textit{relative homology}, and the sandpile group corresponds to what they call the \textit{cutflow group}. See~{\cite[Section 6.6]{mythesis}} for more discussion on the sandpile group of a cell complex and how this relates to the sandpile group of a standard representative matrix. 
\end{rema}

Recall that the sandpile matrix-tree theorem for standard representative matrices (Theorem~\ref{thm:OAmtt}) says that:
\[|\Sand(D)| = \sum_{B \in \mathcal B(D)} m(B)^2.\]

In the following example, we give a demonstration of this theorem. 
\begin{exam}\label{ex:fave0}
Suppose that for $r=2$ and $n=3$, we have the following standard representative matrix:
\[ D = \begin{pmatrix}1 & 0 & 3\\ 0 & 1 & 2\end{pmatrix}.\]
Because all of the maximal minors are nonzero, $\B(D) = \{\{1,2\},\{1,3\},\{2,3\}\}$. Furthermore, $m(\{1,2\}) = 1$, $m(\{1,3\}) = 2$, and $m(\{1,2\}) = 3$. Theorem~\ref{thm:OAmtt} says that $|\Sand(D)| = 1^2 + 2^2 + 3^2 = 14$. Recall that by definition of $\Sand(D)$, this is the number of elements in $\Z^3 / \Ima_\Z(\D^T)$ where
\[ \D = \begin{pmatrix}1 & 0 & 3\\ 0 & 1 & 2\\-3 & -2 & 1\end{pmatrix}.\]
\end{exam}
\begin{defi}
An \textit{$\mathfrak m$-multijection} between sets $S$ and $T$ is a map $f:S \to T$ such that for all $t \in T$, $|f^{-1}(t)| = \mathfrak m(t)$. \end{defi}

An $\mathfrak m$-multijection can also be thought of as a bijection between $S$ and a multiset consisting of $\mathfrak m(t)$ copies of each $t \in T$. In this paper, we give an explicit procedure for constructing $\mathfrak m$-multijections between $\Sand(D)$ and $\B(D)$ for $\mathfrak m(B) = m(B)^2$. To do this, we use a geometric construction, which also produces a periodic tiling of $\R^n$. 

\section{Graphs and Standard Representative Matrices}\label{sec:graphsand}

In this section, we show how to obtain a standard representative matrix from a graph $G$ and one of its spanning trees. The results for this section will not be necessary for understanding future sections, but they are intended to provide some context for the relevance of standard representative matrices. For a more thorough analysis of the connection between standard representative matrices and other objects, see~\cite[Chapters 3-6]{mythesis}. 

Throughout this section, we will fix a finite connected undirected graph $G$ with edges $E(G)$ and spanning trees $\T(G)$ (i.e. maximal collections of edges containing no cycles). Let $n = |E(G)|$ and $r =|T|$ for every $T \in \T(G)$ (it is a classical result that all spanning trees of a graph contain the same number of edges). Furthermore, we will write the edges of $G$ as $\{e_1,\dots,e_n\}$ such that $\{e_1,\dots,e_r\}$ forms a spanning tree which we call $T$. 
\begin{defi}\hfill
\begin{itemize}
    \item A \textit{circuit} of $G$ is a minimal (by inclusion) subset of $E(G)$ not contained in any spanning tree. 
    \item A \textit{cocircuit} of $G$ is a minimal (by inclusion) subset of $E(G)$ intersecting every spanning tree. 
\end{itemize}
These definitions come from matroid theory. In the graphic context, circuits are also called \textit{cycles} and cocircuits are also called \textit{bonds} or \textit{minimal cuts}. 
\end{defi}

\begin{lemma}[{\cite[Corollary 1.2.6,~Exercise 2.1.10]{Oxley}}]\label{lem:funcirc}\hfill
\begin{itemize}
\item For any $e \in E(G) \setminus T$, the set of edges $T \cup \{e\}$ contains a unique circuit. 
\item For any $e \in T$, the set of edges $(E(G) \setminus T) \cup \{e\}$ contains a unique cocircuit. 
\end{itemize}
\end{lemma}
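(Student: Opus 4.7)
The plan is to prove the two bullets by essentially dual tree-based arguments, exploiting the structure of $T$ directly rather than invoking general matroid axioms. Both are standard facts about fundamental circuits and fundamental cocircuits of a spanning tree, and the definitions of circuit/cocircuit given in the paper agree with the usual graph-theoretic notions of cycle and minimal cut when $G$ is a graph.

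For the first bullet, let $e = uv \in E(G) \setminus T$. Since $T$ is a spanning tree, there is a unique path $P$ in $T$ from $u$ to $v$, and $C := P \cup \{e\}$ is a cycle, hence (being a minimal edge set not contained in any spanning tree) a circuit of $G$ lying inside $T \cup \{e\}$. For uniqueness, any circuit $C' \subseteq T \cup \{e\}$ must contain $e$, otherwise $C' \subseteq T$ would be contained in the spanning tree $T$, contradicting the fact that $C'$ is not contained in any spanning tree. Deleting $e$ from $C'$ then yields a $u$--$v$ walk inside $T$, which by uniqueness of the tree path equals $P$, so $C' = C$.

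For the second bullet, let $e \in T$. Deleting $e$ from $T$ splits $T$ into two subtrees whose vertex sets partition $V(G) = V_1 \sqcup V_2$. Let $K \subseteq E(G)$ be the set of all edges of $G$ crossing this partition. Then $e \in K$ and every other edge of $K$ lies in $E(G) \setminus T$, so $K \subseteq (E(G) \setminus T) \cup \{e\}$. Every spanning tree must connect $V_1$ to $V_2$ and hence meets $K$; moreover, for any $f \in K$ the set $(T \setminus \{e\}) \cup \{f\}$ is a spanning tree meeting $K$ only in $f$, so no proper subset of $K$ meets every spanning tree. Thus $K$ is a cocircuit contained in $(E(G) \setminus T) \cup \{e\}$.

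The main obstacle is uniqueness of the cocircuit, which requires slightly more care than the circuit case. Any cocircuit $K' \subseteq (E(G) \setminus T) \cup \{e\}$ must contain $e$, since otherwise $K'$ would be disjoint from the spanning tree $T$, contradicting that $K'$ meets every spanning tree. Then $K' \setminus \{e\} \subseteq E(G) \setminus T$, so the graph $G - (K' \setminus \{e\})$ still contains the two subtrees of $T \setminus \{e\}$ on $V_1$ and $V_2$; hence the vertex partition induced by removing $K'$ refines $(V_1, V_2)$, and by minimality of the cut defining the cocircuit it must equal $(V_1, V_2)$, giving $K' = K$. I would frame this last step in terms of deletion rather than invoking cocircuit elimination, to keep the proof self-contained in this introductory graph-theoretic section.
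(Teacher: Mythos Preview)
The paper does not prove this lemma at all; it is stated with a citation to Oxley and no argument is given. Your direct graph-theoretic proof is therefore not something to compare against a paper proof, but it is a correct and self-contained justification, which is a nice addition given the expository role of this section.

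One small slip: in the cocircuit uniqueness argument you write that ``the vertex partition induced by removing $K'$ refines $(V_1,V_2)$.'' The refinement goes the other way. Since $T\setminus\{e\}\subseteq G-K'$, each of $V_1$ and $V_2$ is connected in $G-K'$, so every component of $G-K'$ is a union of blocks of $\{V_1,V_2\}$; together with the fact that $G-K'$ is disconnected (it contains no spanning tree), this forces the components of $G-K'$ to be exactly $V_1$ and $V_2$. Hence every edge crossing $(V_1,V_2)$ lies in $K'$, i.e.\ $K\subseteq K'$, and minimality of the cocircuit $K'$ then gives $K'=K$. With that direction corrected, the argument is complete.
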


\begin{defi}\hfill
\begin{itemize}
\item For any $e \in E(G) \setminus T$, the circuit contained in $T \cup \{e\}$ is called it \textit{fundamental circuit of $e$} and is denoted $C^e$.  
\item For any $e \in T$, the cocircuit contained in $(E(G) \setminus T) \cup \{e\}$ is called the \textit{fundamental cocircuit of $e$} and is denoted $\what C^e$.
\end{itemize}
\end{defi}

Choose an arbitrary orientation for the edges of $G$. Note that the orientation is for bookkeeping purposes and one should not think of $G$ as a directed graph. Each circuit on a graph corresponds to a cyclic set of edges (ignoring orientation). For $e_i \in E(G) \setminus T$ and $e_j \in T\cap C^e$, we say that \textit{$e_i$ matches the orientation of $C^{e_j}$} if the edges of $C^{e_j}$ can be cyclically oriented in a way that matches the orientation of both $e_i$ and $e_j$. We define an $r \times n$ matrix $D$ in the following way.  
\[\text{For $j\le r$, }D_{ij} = \begin{cases}
1 & \text{ if $i=j$,}\\
0 & \text{ if $i\not= j$.}
\end{cases}\]\[\text{For $j> r$, }D_{ij} = \begin{cases}
1 & \text{ if $e_i \in C^{e_j}$ and $e_i$ matches the orientation of $C^{e_j}$,}\\
-1 & \text{ if $e_i \in C^{e_j}$ and $e_i$ does not match the orientation of $C^{e_j}$,}\\
0 & \text{ otherwise.}\\
\end{cases}\]

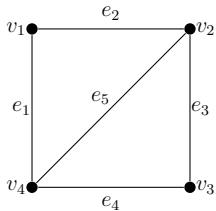
\begin{figure}
\begin{center}
\begin{tikzpicture}[scale = 0.7]
    
    \tikzstyle{every node} = [circle,fill,inner sep=1pt,minimum size = 1.5mm]
    \node(a) at (0,0) {};
    \node(b) at (0,3){};
    \node(c) at (3,3) {};
    \node(d) at (3,0){};

    \tikzstyle{every node} = [draw = none,fill = none,scale = .7]
    \draw (a) -- (b);
    \draw (b) -- (c);
    \draw (c) -- (d);
    \draw (a) -- (c);
    \draw (a) -- (d);
    
    \node (o) at (-.3,3){{\Large $v_1$}};
    \node (o) at (3.3,3){{\Large $v_2$}};
    \node (o) at (3.3,0){{\Large $v_3$}};
    \node (o) at (-.3,0){{\Large $v_4$}};
    
    \node (o) at (-.2,3/2){\Large $e_1$};
    \node (o) at (3/2,3.3){\Large $e_2$};
    \node (o) at (3.2,3/2){\Large $e_3$};
    \node (o) at (3/2,-.3){\Large $e_4$};
    \node (o) at (1.3,1.7){\Large $e_5$};

\end{tikzpicture}
\caption{A graph with 4 vertices and 5 edges.}\label{fig:k4minus1}
\end{center}
\end{figure}

\begin{exam}
Let $G$ be the graph in Figure~\ref{fig:k4minus1}. Choose $T = \{e_1,e_2,e_3\}$ and orient each edge from smaller to larger numbered vertex. This gives the following matrix:
\[D = 
\begin{blockarray}{ccccc}
 e_1 & e_2 & e_3 & e_4 & e_5 \\
\begin{block}{(ccccc)}
  1 & 0 & 0 & -1 & -1 \\
  0 & 1 & 0 & 1 & 1 \\
  0 & 0 & 1 & 1 & 0 \\
\end{block}
\end{blockarray}.
 \]
\end{exam}

It follows immediately from construction that the matrix $D$ is always a standard representative matrix. Notice that the construction of $D$ does not require information about the vertices of $G$. This property means that the construction is \textit{matroidal}. From Definition~\ref{def:sandpile}, it is logical to define the sandpile group of $G$ as:
\[\Z^{E(G)}/(\Ima_\Z(D^T) \oplus \ker_\Z(D)),\] 
a subgroup of the free abelian group on the edges of $G$. In Proposition~\cite[4.1.8]{mythesis}, we show that this definition does not depend on the choice of spanning tree $T$.

The usual definition of sandpile group of a graph is a subgroup of the free abelian group on the \textit{vertices} of $G$. We will not define this group here (see e.g. \cite{Klivans}), but we will call it the \textit{vertex sandpile group of G}. The following proposition follows from results in~\cite{Bacher,Biggs} as well as from~\cite[Proposition~3.2.11 and Proposition~4.1.18]{mythesis}.

\begin{prop}
The boundary map between edges and vertices of $G$ (with respect to the orientation we used to define $D$) induces an isomorphism between $\Z^{E(G)}/(\Ima_\Z(D^T) \oplus \ker_\Z(D))$ and the vertex sandpile group of $G$. 
\end{prop}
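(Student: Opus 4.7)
The plan is to identify the sandpile lattice $\Ima_\Z(D^T) \oplus \ker_\Z(D) \subset \Z^{E(G)}$ with the classical cut-plus-flow sublattice coming from the edge-vertex boundary map, and then perform a routine diagram chase. Let $\partial\colon \Z^{E(G)} \to \Z^{V(G)}$ denote this boundary map (with respect to the chosen orientation of $G$), so by definition $\Ima_\Z(\partial^T) \subset \Z^{E(G)}$ is the cut lattice of $G$ and $\ker_\Z(\partial)$ is the flow lattice. I will use the standard presentation of the vertex sandpile group as $\ker(\epsilon)/\Ima(L)$, where $\epsilon\colon \Z^{V(G)} \to \Z$ is the augmentation (sum) map and $L = \partial\partial^T$ is the graph Laplacian; this is equivalent to the reduced-Laplacian definition by a standard argument for connected $G$.

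The first step is to prove the two lattice identifications
\[\Ima_\Z(D^T) = \Ima_\Z(\partial^T) \quad\text{and}\quad \ker_\Z(D) = \ker_\Z(\partial).\]
For the first, each row of $D$ is by construction the signed indicator vector of a fundamental cocircuit $\what C^{e_i}$, which can be written as $\partial^T(\mathbf 1_A)$ for one of the two tree-components $A$ obtained by deleting $e_i$ from $T$; this gives $\Ima_\Z(D^T) \subset \Ima_\Z(\partial^T)$. For the reverse inclusion, note that $D$ has $I_r$ in its first block, so the projection of $\Ima_\Z(D^T)$ onto the tree coordinates is all of $\Z^r$. Since both lattices have rank $r$, both lie in $\Ima_\R(\partial^T)$, and that projection is injective on each, the inclusion is an equality. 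The analogous projection argument on the non-tree coordinates, using the fundamental circuits $e_j - \sum_{i \le r} D_{ij} e_i$ as an explicit integral basis of $\ker_\Z(D)$, yields the second identification.

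With these identifications in hand, the rest is a short diagram chase through $\partial$. Since $G$ is connected, $\Ima_\Z(\partial) = \ker(\epsilon)$, so $\partial$ descends to an isomorphism $\Z^{E(G)}/\ker_\Z(D) \xrightarrow{\sim} \ker(\epsilon)$. By Lemma~\ref{lem:orthogonality}, $\Ima_\Z(D^T) \cap \ker_\Z(D) = 0$ (justifying the direct-sum notation in the statement), and the second isomorphism theorem identifies the image of $\Ima_\Z(D^T) \oplus \ker_\Z(D)$ in $\Z^{E(G)}/\ker_\Z(D)$ with $\partial(\Ima_\Z(D^T)) = \Ima_\Z(\partial\partial^T) = \Ima(L)$. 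Taking a further quotient yields
\[\Z^{E(G)}/\bigl(\Ima_\Z(D^T) \oplus \ker_\Z(D)\bigr) \;\cong\; \ker(\epsilon)/\Ima(L),\]
and naturality is automatic since the isomorphism is induced by $\partial$ throughout.

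The only real obstacle is the sign bookkeeping for the first lattice identification: one must check that the signs encoded by the ``matches the orientation of $C^{e_j}$'' convention are precisely those which make each row of $D$ land in the integer image of $\partial^T$, and not merely in its rational span. This is essentially the content of the cited results of Bacher, de la Harpe, and Nagnibeda, together with the corresponding material in \cite{mythesis}; once it is verified, the remainder of the argument is formal.
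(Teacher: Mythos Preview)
The paper does not actually prove this proposition; it simply cites \cite{Bacher,Biggs} and two propositions from the author's thesis. Your write-up is therefore not competing against an in-paper argument but rather supplying the standard proof those citations point to, and it does so correctly: identify $\Ima_\Z(D^T)$ and $\ker_\Z(D)$ with the classical cut and flow lattices of $\partial$, then push through $\partial$ to reach $\ker(\epsilon)/\Ima(L)$. This is precisely the Bacher--de la Harpe--Nagnibeda line of reasoning.

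Two minor remarks on presentation. First, your justification that $\Ima_\Z(D^T)=\Ima_\Z(\partial^T)$ via ``both have rank $r$ and the tree-coordinate projection is injective and surjective'' is fine, but it is cleaner to say directly that the projection onto tree coordinates is an isomorphism from the cut lattice to $\Z^r$ (any cut is determined by its intersection with $T$), and the rows of $D$ already hit the standard basis. Second, you correctly flag the sign bookkeeping as the only genuine content; it may be worth noting explicitly that the paper's convention (``matches the orientation of $C^{e_j}$'') is designed so that the $j$-th column of $D$, for $j>r$, records the unique expression of $\partial e_j$ in terms of $\{\partial e_1,\dots,\partial e_r\}$, which is exactly what forces $e_j-\sum_{i\le r}D_{ij}e_i\in\ker_\Z(\partial)$ and, dually, each row of $D$ into $\Ima_\Z(\partial^T)$.
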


We can also define an integral basis for $\ker_\Z(D)$ in terms of the fundamental cocircuits of $e$ for $e \in T$. 

Choose the same orientation on $G$ that we used for defining $D$. Each cocircuit on $G$ corresponds to a minimal set of edges which partition the vertices of $G$ into subsets $V_1$ and $V_2$. For $e_i \in T$ and $e_j \in (E(G) \setminus T)\cap \what C^{e_i}$, we say that \textit{$e_i$ matches the orientation of $\what C^{e_j}$} if $e_i$ and $e_j$ are both oriented from $V_1$ to $V_2$ or both oriented from $V_2$ to $V_1$. We define an $(n-r) \times n$ matrix $\what D$ in the following way.  
\[\text{For $j> r$, }\what D_{ij} = \begin{cases}
1 & \text{ if $i=j-r$,}\\
0 & \text{ if $i\not= j-r$.}
\end{cases}\]\[\text{For $j\le r$, }\what D_{ij} = \begin{cases}
1 & \text{ if $e_{i+r} \in \what C^{e_j}$ and $e_{i+r}$ matches the orientation of $\what C^{e_j}$,}\\
-1 & \text{ if $e_{i+r} \in \what C^{e_j}$ and $e_{i+r}$ does not match the orientation of $\what C^{e_j}$,}\\
0 & \text{ otherwise.}\\
\end{cases}\]

We show in~\cite[Lemma~4.5.12]{mythesis} that $\what D$ is the dual matrix of $D$, so our notation is consistent with the notation given in Definition~\ref{def:standrep}. 

\begin{rema}
The construction of $D$ given above can be applied to any~\textit{regular matroid}, and a version of this construction was used in~\cite{BBY}. We can also generalize this definition to any~\textit{cell complex with a torsion-free spanning forest} or~\textit{representable arithmetic matroid with at least one multiplicity one basis}. See~\cite[Chapter 4-6]{mythesis} for more discussion of this generalization. 
\end{rema}

\section{A Tiling of $\R^n$}\label{sec:tiling}

For the remainder of this paper, we will always let $D = \begin{pmatrix} I_r &M \end{pmatrix}$ be an $r \times n$ standard representative matrix. Furthermore, we let $\widehat D = \begin{pmatrix} -M^T & I_{n-r} \end{pmatrix}$ be the dual matrix of $D$ and 
\[\D = \begin{pmatrix} D \\ \what D \end{pmatrix} = \begin{pmatrix}  I_r & M\\  -M^T & I_{n-r}\end{pmatrix}\]
be the full matrix of $D$. Recall from Definition~\ref{def:bandm} that $\B(D)$ is the set of $r$ element subsets of the columns of $D$ with nonzero determinant and for $B \in \B(D)$, $m(B)$ is the magnitude of the corresponding determinant. In this section, we will associate each $B \in B$ with a lattice parallelepiped and then show that the non-convex polytope formed by their union periodically tiles $\R^n$. In the next section, we will show how to use this tiling to construct a family of multijections. 

We think of $B \in \B(D)$ as a set $\{\indB 1,\dots, \indB r\}$ of column indices. These simultaneously describe a set of columns of $D$, $\widehat D$ or $\D$. Because we are working in $\R^n$, it will be useful to allow for a version of the sandpile group whose representatives are real vectors. 

\begin{defi}
The \textit{continuous sandpile group} of $D$ is the group:
\[\tilde \Sand(D) = \R^n / (\Ima_\Z(D^T) \oplus \ker_\Z(D)) = \R^n/\Ima_\Z(\D^T)\]
\end{defi}

We will also make heavy use of the following lemma, which follows immediately from the definition of sandpile groups and continuous sandpile groups of standard representative matrices. 
\begin{lemma}\label{lem:whenequiv}
Let $D$ be an $r \times n$ standard representative matrix. Two vectors $z,z' \in \Z^{n}$ (resp. $\R^{n}$) are equivalent as elements of $\Sand(D)$ (resp. $\tilde \Sand(D)$) if and only if $z - z' \in \Ima_\Z(\D^T)$.
\end{lemma}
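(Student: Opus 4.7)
The plan is essentially to unpack the definitions of $\Sand(D)$ and $\tilde \Sand(D)$ and invoke the standard fact that two elements of a quotient group $A/H$ are equal if and only if their difference lies in $H$.

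More precisely, Definition~\ref{def:sandpile} gives $\Sand(D) = \Z^n / (\Ima_\Z(D^T) \oplus \ker_\Z(D))$, and by Lemma~\ref{lem:flowofcut} this subgroup equals $\Ima_\Z(\D^T)$. Thus two vectors $z, z' \in \Z^n$ represent the same class in $\Sand(D)$ iff $z + \Ima_\Z(\D^T) = z' + \Ima_\Z(\D^T)$, which happens iff $z - z' \in \Ima_\Z(\D^T)$. For the continuous case, I would mimic the same computation with $\R^n$ in place of $\Z^n$: the only structural fact being used is that $\Ima_\Z(\D^T)$ is a subgroup of the ambient abelian group, and this is true both when we view it inside $\Z^n$ and inside $\R^n$.

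The main (and essentially only) subtlety is confirming that the two presentations of the quotient group in Definition~\ref{def:sandpile}, namely $\Z^n/(\Ima_\Z(D^T) \oplus \ker_\Z(D))$ and $\Z^n/\Ima_\Z(\D^T)$, are literally the same quotient and not merely isomorphic. This is exactly the content of the equality $\Ima_\Z(\D^T) = \Ima_\Z(D^T) \oplus \ker_\Z(D)$ from Lemma~\ref{lem:flowofcut}, so I would cite that lemma and then the result is immediate from the definition of a coset.

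Since there is no real obstacle here, the entire argument is a one-line invocation of the quotient-group criterion together with Lemma~\ref{lem:flowofcut}; I would present it as a short remark/verification rather than a multi-step proof.
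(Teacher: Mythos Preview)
Your proposal is correct and matches the paper's treatment: the paper states that the lemma ``follows immediately from the definition of sandpile groups and continuous sandpile groups of standard representative matrices'' and gives no further proof. Your unpacking via the coset criterion and Lemma~\ref{lem:flowofcut} is exactly the intended one-line justification.
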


We introduce some definitions and notation that can be found in~\cite{Beck}.
\begin{defi}\hfill\label{def:parpip}
\begin{itemize}
    \item The \textit{fundamental parallelepiped} of a square matrix $A$ with column vectors $\{\col 1,\dots,\col k\}$ is the set of points:
\[ \left\{\sum_{i=1}^k a_i \col i \mid 0 \le a_i \le 1\right\}.\]
    \item The \textit{half-open fundamental parallelepiped} of a square matrix $A$ with column vectors $\{\col 1,\dots,\col k\}$ is the set of points:
\[ \left\{\sum_{i=1}^k a_i \col i \mid 0 \le a_i < 1\right\}.\]
\end{itemize}
We use the notation $\Pi_{\bullet}(A)$ to indicate the fundamental parallelepiped of $A$ and  
$\Pi_{\circ}(A)$ to indicate the half-open fundamental parallelepiped of $A$. 
\end{defi}

It is a classical result that the volume of $\Pi_\bullet(A)$ or $\Pi_\circ(A)$ is the magnitude of $\det(A)$. 

\begin{defi} \label{def:p1p2p}
For any basis $B\in \B(D)$:
\begin{itemize}
    \item $P_1(B)$ is the fundamental parallelepiped of $D$ restricted to columns in $B$. 
    \item $P_2(B)$ is the fundamental parallelepiped of $\widehat D$ restricted to columns \textit{not} in $B$.
    \item $P(B)$ is the direct product of $P_1(B)$ and $P_2(B)$.
\end{itemize}
\end{defi}

Note that $P_1(B)$ is $r$-dimensional, $P_2(B)$ is $(n-r)$-dimensional, and $P(B)$ is $n$-dimensional. 

\begin{lemma}[{\cite[Lemma 7.1.5]{mythesis}}]\label{lem:pipedvol}For any basis $B\in \mathcal B(D)$, $P_1(B)$ and $P_2(B)$ each have volume $m(B)$ while $P(B)$ has volume $m(B)^2$.
\end{lemma}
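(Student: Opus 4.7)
The plan is to break the lemma into three steps, one for each claim.

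First, for $P_1(B)$, the computation is immediate: $P_1(B)$ is by definition the fundamental parallelepiped of the $r \times r$ matrix $D_B$ obtained by restricting $D$ to the columns of $B$, so the classical volume formula (noted just after Definition~\ref{def:parpip}) gives $\mathrm{vol}(P_1(B)) = |\det(D_B)|$, which is exactly $m(B)$ by Definition~\ref{def:bandm}.

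Second, and this is where the real work lies, I want to show $\mathrm{vol}(P_2(B)) = m(B)$ as well. By the same classical formula, $\mathrm{vol}(P_2(B)) = |\det(\widehat D_{B^c})|$, so it suffices to prove the duality identity $|\det(\widehat D_{B^c})| = |\det(D_B)|$. My plan is a direct block-matrix argument. Write $A = B \cap [r]$ and $C = B \cap [r+1,n]$, with $|A| = a$ and $|C| = r-a$, so $B^c \cap [r] = [r] \setminus A$ and $B^c \cap [r+1,n] = [r+1,n] \setminus C$. Permuting the rows of $D$ so that indices in $A$ come first and permuting the chosen columns so that the $I_r$-columns come first, $D_B$ takes the block form
\[
\begin{pmatrix} I_a & M_{A,\,C-r} \\ 0 & M_{A^c,\,C-r} \end{pmatrix},
\]
whose determinant is $\pm \det(M_{A^c,\,C-r})$. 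Carrying out the analogous permutation on the rows and columns of $\widehat D_{B^c}$ (indices in $C-r$ for rows first, $I_{n-r}$-columns last), and using that the columns of $\widehat D$ in $[r]$ are the negatives of the transposed rows of $M$, I get
\[
\widehat D_{B^c} \;\sim\; \begin{pmatrix} -M_{A^c,\,C-r}^{T} & 0 \\ -M_{A^c,\,C^c-r}^{T} & I_{n-2r+a} \end{pmatrix},
\]
whose determinant is $\pm \det(M_{A^c,\,C-r})$. Hence the two determinants agree in absolute value and both equal $m(B)$. The main obstacle is just keeping the indexing and row/column permutations straight; once the block forms are written down, the identity is immediate.

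Finally, for $P(B)$, recall that $P(B) = P_1(B) \times P_2(B)$, where $P_1(B) \subset \R^r$ and $P_2(B) \subset \R^{n-r}$ under the obvious embeddings. Since the product sits in $\R^r \times \R^{n-r} = \R^n$, Fubini's theorem gives
\[
\mathrm{vol}(P(B)) \;=\; \mathrm{vol}(P_1(B)) \cdot \mathrm{vol}(P_2(B)) \;=\; m(B) \cdot m(B) \;=\; m(B)^2,
\]
completing the proof. The only substantive step is the duality calculation in Step~2; Steps~1 and~3 are essentially bookkeeping on top of standard facts.
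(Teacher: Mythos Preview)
Your proof is correct. The paper does not actually prove this lemma; it simply cites the author's thesis, so there is no in-paper argument to compare against. Your block-matrix computation establishing the duality identity $|\det(\widehat D_{B^c})| = |\det(D_B)|$ is the standard way to see this for matrices in the form $(I_r\mid M)$, and the bookkeeping in Steps~1 and~3 is straightforward. One minor remark: in the second block display you might note explicitly that the row and column permutations contribute only signs, hence do not affect the absolute value; otherwise the argument is complete as written.
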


We can also describe $P(B)$ in the following way.
For each column of $\D$, if this column corresponds to an index of $B$, replace the last $(n-r)$ entries with 0's. If this column does not correspond to an index of $B$, replace the first $r$ entries with 0's. The fundamental parallelepiped of this matrix is $P(B)$. See Example~\ref{ex:favorite}. 

\begin{exam}\label{ex:favorite}
Consider the matrix 

\[D = \begin{pmatrix}
1 & 0 & 3 \\
0 & 1 & 2\\
\end{pmatrix}
\text{ which is associated with the matrix }
\D = \begin{pmatrix}
1 & 0 & 3 \\
0 & 1 & 2\\
-3&-2 & 1\\
\end{pmatrix}.\]

\noindent As we saw in Example~\ref{ex:fave0}, there are 3 bases of $\mathcal B(D)$, one for every pair of columns. The associated parallelepipeds are given below:

\[
P_1(\{1,2\}) = \Pi_\bullet\begin{pmatrix}
1 & 0 \\
0 & 1 \\
\end{pmatrix}\hspace{.5cm}
P_2(\{1,2\}) = \Pi_\bullet\begin{pmatrix}
1 \\
\end{pmatrix}\hspace{.5cm}
P(\{1,2\}) = \Pi_\bullet\begin{pmatrix}
1 & 0 & 0\\
0 & 1 & 0\\
0 & 0 & 1\\
\end{pmatrix}\]

\[
P_1(\{1,3\}) = \Pi_\bullet\begin{pmatrix}
1 & 3 \\
0 & 2 \\
\end{pmatrix}\hspace{.5cm}
P_2(\{1,3\}) = \Pi_\bullet\begin{pmatrix}
-2 \\
\end{pmatrix}\hspace{.5cm}
P(\{1,3\}) = \Pi_\bullet\begin{pmatrix}
1 & 0 & 3\\
0 & 0 & 2\\
0 & -2 & 0\\
\end{pmatrix}\]

\[
P_1(\{2,3\}) = \Pi_\bullet\begin{pmatrix}
0 & 3 \\
1 & 2 \\
\end{pmatrix}\hspace{.5cm}
P_2(\{2,3\}) = \Pi_\bullet\begin{pmatrix}
-3 \\
\end{pmatrix}\hspace{.5cm}
P(\{2,3\}) = \Pi_\bullet\begin{pmatrix}
0 & 0 & 3\\
0 & 1 & 2\\
-3& 0 & 0\\
\end{pmatrix}.\]

\begin{figure}
\begin{center}

\begin{tikzpicture}
    \tikzstyle{every node} = [circle,fill,inner sep=1pt,minimum size = 1.5mm];
\node(aaa) at (0,0) {};
\node(aba) at (0,1){};
\node(bba) at (1,1){};
\node(baa) at (1,0){}; 
\node(aab) at (.3,.6){};
\node(abb) at (.3,1.6){};
\node(bbb) at (1.3,1.6){};
\node(bab) at (1.3,.6){};

\node(c) at (3.9,1.8){};
\node(d) at (3.6,1.2){};
\node(e) at (4.6,1.2){};

\node(f) at (0,-2){};
\node(g) at (3.6,-.8){};
\node(h) at (4.6,-.8){};
\node(i) at (1,-2){};

\node(j) at (0,-3){};
\node(k) at (3.6,-1.8){};
\node(l) at (3.9,-1.2){};
\node(m) at (0.3,-2.4){};

\draw (aaa) -- (aba) -- (bba) -- (baa) -- (aaa) ;
\draw (abb) -- (bbb) -- (bab);
\draw[dashed] (bab)--(aab);
\draw[dashed] (aaa)--(aab);
\draw[dashed] (aab)--(abb);
\draw(aba) -- (abb);
\draw(bba) -- (bbb);
\draw(baa) -- (bab);

\draw[dashed] (aab)--(1.3,0.6+1/3);
\draw (1.3,0.6+1/3)-- (c) -- (d) -- (1.2,0.4);
\draw[dashed] (1.2,0.4)--(aaa);
\draw (baa)--(e)--(d);
\draw (aaa)--(f) -- (i) -- (baa);
\draw (i) -- (h) -- (e);
\draw[dashed](f) -- (g) -- (d);
\draw[dashed](g) --(h);

\draw (f)--(j) -- (k) -- (l);
\draw[dashed] (j) -- (m) -- (l);
\draw[dashed] (m) -- (aab);
\draw (k) -- (3.6,-1.133);
\draw [dashed] (3.6,-1.133) -- (g);
\draw(c) -- (3.9,1.2);
\draw[dashed] (3.9,1.2) -- (l);
\end{tikzpicture}

\caption{Here is a plot of the three parallelepipeds from Example~\ref{ex:favorite} in $3$-dimensional space. The cube is $P(\{1,2\})$, the smaller of the two remaining parallelepipeds is $P(\{1,3\})$, and the larger is $P(\{2,3\})$. We will see in Corollary~\ref{cor:tiling} that the union of these parallelepipeds periodically tiles the plane.}
\label{favefig}
\end{center}
\end{figure}
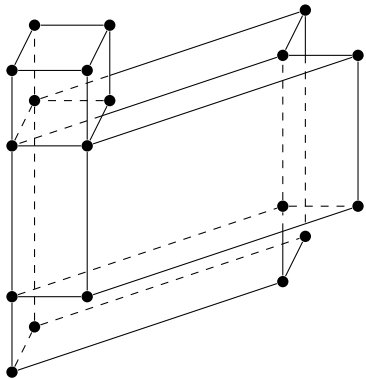

\end{exam}

See Figure~\ref{favefig} for a plot of these three parallelepipeds. Notice that they only intersect at their boundaries. We show that this is true in general. 

\begin{prop}\label{prop:nointersect}
The parallelepipeds $P(B)$ for each basis $B\in \B(D)$ do not intersect except at their boundaries.
\end{prop}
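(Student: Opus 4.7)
The plan is to argue by contradiction: suppose some point $p$ lies in the interior of both $P(B)$ and $P(B')$ for distinct $B,B' \in \B(D)$. Using the description of $P(B)$ as the fundamental parallelepiped of the $n\times n$ matrix $\D_B$ whose $k$th column equals the $k$th column of $\D$ with its last $n-r$ entries zeroed when $k\in B$ and with its first $r$ entries zeroed when $k\notin B$, and noting that $|\det(\D_B)| = m(B)^2 > 0$ by Lemma~\ref{lem:pipedvol}, I can write $p = \D_B x = \D_{B'} x'$ for unique vectors $x,x' \in (0,1)^n$. Separating the top $r$ and bottom $n-r$ blocks of this identity, and writing $y|_S$ for the vector in $\R^n$ obtained by zeroing all coordinates of $y$ outside $S$, it becomes
\[D(x|_B) = D(x'|_{B'}) \quad\text{and}\quad \what D(x|_{\ov B}) = \what D(x'|_{\ov{B'}}),\]
where $\ov B := [n]\setminus B$. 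Thus $u := x|_B - x'|_{B'}$ lies in $\ker_\R(D)$ and $v := x|_{\ov B} - x'|_{\ov{B'}}$ lies in $\ker_\R(\what D) = \Ima_\R(D^T)$ (by Lemma~\ref{lem:flowofcut}), so Lemma~\ref{lem:orthogonality} forces $\langle u,v\rangle = 0$.

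The contradiction would then come from a coordinate-wise sign analysis of $u_i v_i$. On $i \in B \cap B'$ one has $v_i = 0$, and on $i \in \ov B \cap \ov{B'}$ one has $u_i = 0$, so these indices contribute nothing. On $i \in B\setminus B'$ one finds $u_i = x_i > 0$ and $v_i = -x'_i < 0$, and symmetrically on $i \in B'\setminus B$ one gets $u_i = -x'_i < 0$ and $v_i = x_i > 0$; in either case $u_i v_i = -x_i x'_i < 0$. Since $B \neq B'$ forces $B \triangle B' \neq \emptyset$, summing gives $\langle u, v\rangle < 0$, contradicting the orthogonality.

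The main obstacle I expect is bookkeeping: each of the four cases ($B\cap B'$, $B\setminus B'$, $B'\setminus B$, $\ov B \cap \ov{B'}$) must be handled with the correct sign and the correct zero pattern, as the padding convention in $\D_B$ is a natural source of sign errors. Once the signs are unambiguously pinned down, the strict negativity on $B \triangle B'$ clashes with the orthogonality from Lemma~\ref{lem:orthogonality}, and the only escape is $B = B'$.
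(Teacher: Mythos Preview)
Your proof is correct and follows essentially the same approach as the paper: both arguments produce a vector in $\ker_\R(D)$ and a vector in $\ker_\R(\what D)=\Ima_\R(D^T)$ from the two interior representations, invoke the orthogonality of Lemma~\ref{lem:orthogonality}, and then run the same four-case sign analysis on the symmetric difference $B\triangle B'$ to force a strictly negative inner product. The only difference is notational: you package things via the matrix $\D_B$ and the restriction $x|_S$, whereas the paper writes out the sums $\sum a_i x_i$ and $\sum \widehat a_i \widehat x_i$ explicitly.
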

\begin{proof}
Let $\{\col 1,\dots, \col n\}$ be the columns of $D$ and $\{\hcol 1,\dots, \hcol n\}$ be the columns of $\widehat D$. Let $B_1$ and $B_2$ be two distinct bases in $\B(D)$. 

$P(B_1)$ and $P(B_2)$ have intersecting interiors if and only if $P_1(B_1)$ and $P_1(B_2)$ have intersecting interiors and $P_2(B_1)$ and $P_2(B_2)$ have intersecting interiors. Assume that $P_1(B_1)$ and $P_1(B_2)$ have intersecting interiors. Then, for some coefficients $a_1,\dots,a_{n},\\
b_1,\dots,b_{n}$, we have the following equality
\[\sum_{i=1}^{n} a_i \col i = \sum_{i=1}^{n} b_i \col i\]

\noindent where $0 < a_i < 1$ for $i \in B_1$, $a_i = 0$ for $i \not\in B_1$, $0 < b_i < 1$ for $i \in B_2$, and $b_i =0 $ for $i \not\in B_2$. If we subtract the second sum from the first and define $d_i = a_i - b_i$, we get the equation
\[\sum_{i=1}^{n} d_i \col i=\un 0.\]

The above equation implies that
\[(d_1,d_2,\dots,d_n)\in \ker_\mathbb R (D).\]

Similarly, if $P_2(B_1)$ and $P_2(B_2)$ have intersecting interiors then for some coefficients $\widehat a_1,\dots,\widehat a_{n},
\widehat b_1,\dots,\widehat b_{n}$, we have the following equality:
\[\sum_{i=1}^{n} \widehat a_i\hcol i = \sum_{i=1}^{n} \widehat b_i\hcol i\]

\noindent where $0 < \widehat a_i <1$ for $i \not\in B_1$, $a_i = 0$ for $i \in B_1$, $0 < b_i < 1$ for $i \not\in B_2$, and $b_i =0 $ for $i \in B_2$. For each $i \in [n]$, let $\widehat d_i = \widehat a_i - \widehat b_i$. Then,
\[\sum_{i=1}^{n} \widehat d_i\hcol i=\un 0.\]

It follows that:
\[(\widehat d_1,\widehat d_2,\dots,\widehat d_n)\in \ker_\mathbb R (\widehat D) = \Ima_\R(D^T)\]

\noindent where the last equality follows from Lemmas~\ref{lem:flowofcut}.

Lemma~\ref{lem:orthogonality} says that $\Ima_\R(D^T)$ and $\ker_\R(D)$ are orthogonal. This means,
\[0 =  (d_1,d_2,\dots,d_n) \cdot(\widehat d_1,\widehat d_2,\dots,\widehat d_n) = \sum_{i=1}^{n} d_i\widehat d_i.\]

For each $i$, there are 4 possibilities:\\

\textbf{Case 1) $i \in B_1 \cap B_2$}:

$\widehat a_i = \widehat b_i = 0$, so $\widehat d_i = 0$ and $d_i \cdot \widehat d_i = 0$. 

\textbf{Case 2) $i \in B_1 \setminus B_2$}:

$b_i = 0$, so $d_i = a_i$. This means that $0 < d_i < 1$. Furthermore, $\widehat a_i = 0$, so $\widehat d_i = -\widehat b_i$ and $-1 < \widehat d_i  < 0$. It follows that $d_i \cdot \widehat d_i < 0$.

\textbf{Case 3) $i \in B_2 \setminus B_1$}:

$a_i = 0$, so $d_i = -b_i$. This means that $-1 < d_i < 0$. Furthermore, $\widehat b_i = 0$, so $\widehat d_i = \widehat a_i$ and $0 < \widehat d_i < 1$. It follows that $d_i \cdot \widehat d_i < 0$.

\textbf{Case 4) $i \not\in B_1 \cup B_2$}:

$ a_i = b_i = 0$, so $d_i = 0$ and $d_i \cdot \widehat d_i = 0$.\\

$B_1$ and $B_2$ are the same size and distinct, so cases 2 and 3 must each occur at least once. This means that
\[\sum_{i=1}^{n} d_i\widehat d_i < 0.\]
This is a contradiction.
\end{proof}

\begin{defi}
$T(D)$, the \textit{tile associated with $D$}, is 
\[T(D) = \bigcup_{B \in \B(D)} P(B).\]
\end{defi}

\noindent Corollary~\ref{cor:tiling} will justify why we call this non-convex polyhedron a \textit{tile}. 

The following corollary follows directly from Lemma~\ref{lem:pipedvol} which gives the size of each $P(B)$ and Proposition~\ref{prop:nointersect} which says that they don't intersect.

\begin{coro}\label{cor:voltile}
The volume of $T(D)$ is equal to
\[\sum_{B \in \B(D)} m(B)^2.\]
\end{coro}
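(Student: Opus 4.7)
The plan is to use finite additivity of Lebesgue measure together with the two ingredients the author has already supplied. By Proposition~\ref{prop:nointersect}, for distinct bases $B, B' \in \B(D)$ the parallelepipeds $P(B)$ and $P(B')$ meet only on their boundaries. Each $P(B)$ is an $n$-dimensional parallelepiped, so its topological boundary is a finite union of parallelepipeds of strictly smaller dimension, hence has $n$-dimensional Lebesgue measure zero. Consequently all pairwise intersections $P(B) \cap P(B')$ are null sets, and finite additivity (together with inclusion-exclusion, whose higher-order terms all vanish) gives
\[\mathrm{vol}\bigl(T(D)\bigr) = \mathrm{vol}\Bigl(\bigcup_{B \in \B(D)} P(B)\Bigr) = \sum_{B \in \B(D)} \mathrm{vol}\bigl(P(B)\bigr).\]

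I would then invoke Lemma~\ref{lem:pipedvol}, which states that $\mathrm{vol}(P(B)) = m(B)^2$, to rewrite the right-hand side as $\sum_{B \in \B(D)} m(B)^2$. Since $\B(D)$ is a finite set (it is a subset of the $r$-element subsets of the $n$ columns), there are no convergence issues and the argument is complete.

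The only subtlety — and it is a very mild one — is justifying that the boundary of each $P(B)$ has measure zero; this is why I would explicitly note that the boundary is a finite union of lower-dimensional parallelepipeds, each of which is the image of a bounded set in $\R^{n-1}$ under a Lipschitz map to $\R^n$. No serious obstacle is expected, and the proof will be just a few lines.
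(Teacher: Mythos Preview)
Your proposal is correct and matches the paper's own justification exactly: the paper simply states that the corollary follows directly from Lemma~\ref{lem:pipedvol} (giving $\mathrm{vol}(P(B)) = m(B)^2$) and Proposition~\ref{prop:nointersect} (giving that the $P(B)$ overlap only on boundaries). Your added remark that boundaries are finite unions of lower-dimensional parallelepipeds, hence null sets, just makes explicit what the paper leaves implicit.
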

\noindent Note that this sum is also equal to $|\Sand(D)|$ by Theorem~\ref{thm:OAmtt}. 

When considering all of $T(D)$, we can strengthen Proposition~\ref{prop:nointersect} to the following:

\begin{prop}\label{prop:lastDomino}
Two distinct points of $T(D)$ can only be equivalent as elements of $\tilde \Sand(D)$ if they are both on the boundary of $T(D)$. 
\end{prop}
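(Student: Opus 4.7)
The plan is to extend the proof of Proposition~\ref{prop:nointersect} to allow a nonzero translation $v \in \Ima_\Z(\D^T)$. Suppose for contradiction that $p$ lies in the interior of $T(D)$ and $q \in T(D) \setminus \{p\}$ satisfies $\pi(p) = \pi(q)$, where $\pi \colon \R^n \to \tilde\Sand(D)$ is the quotient; put $v = q - p \in \Ima_\Z(\D^T) \setminus \{0\}$. The first step is a measure-theoretic reduction. Choose an open ball $U$ around $p$ contained in the interior of $T(D)$. Because $T(D)$ is a finite union of $n$-dimensional parallelepipeds and $q \in T(D)$, the set $T(D)$ has positive Lebesgue density at $q$, so $\{x \in U : x + v \in T(D)\}$ has positive measure; deleting the measure-zero sets $\bigcup_B \partial P(B)$ and $\bigcup_B (\partial P(B) - v)$, I can find an $x$ with $x$ in the interior of some $P(B_1)$ and $x + v$ in the interior of some $P(B_2)$, for bases $B_1, B_2 \in \B(D)$. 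It now suffices to derive a contradiction in this reduced setting.

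Proceeding as in Proposition~\ref{prop:nointersect}, parametrize $x$ by $a, b \in (0,1)^n$ supported on $B_1$ and $[n] \setminus B_1$, so that the first $r$ coordinates of $x$ equal $Da$ and the last $n - r$ equal $\widehat D b$; similarly write $x + v$ using $a', b'$ supported on $B_2$ and $[n] \setminus B_2$. By Lemma~\ref{lem:flowofcut}, uniquely decompose $v = D^T \alpha + \widehat D^T \beta$ with $\alpha \in \Z^r$ and $\beta \in \Z^{n-r}$, and set $w = (-\alpha, \beta) \in \Z^n$ in block form. A direct computation gives $Dw = -v_1$ and $\widehat D w = v_2$ for $v = (v_1, v_2) \in \R^r \times \R^{n-r}$. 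With $d := a - a'$ and $\hat d := b - b'$, we have $Dd = -v_1$ and $\widehat D \hat d = -v_2$, so $d - w \in \ker_\R(D)$ and $\hat d + w \in \ker_\R(\widehat D) = \Ima_\R(D^T)$ (the latter by Lemma~\ref{lem:flowofcut}). The orthogonality in Lemma~\ref{lem:orthogonality} then yields $(d - w) \cdot (\hat d + w) = 0$, which after expansion becomes
\[
\sum_i f_i = 0, \qquad f_i = w_i^2 + w_i(\hat d_i - d_i) - d_i \hat d_i.
\]

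The heart of the proof is a case-by-case analysis of $f_i$. For $i \in B_1 \cap B_2$ one has $\hat d_i = 0$ and $f_i = w_i(w_i - d_i)$, which is non-negative and vanishes only when $w_i = 0$ since $|d_i| < 1$ and $w_i \in \Z$; the symmetric formula $f_i = w_i(w_i + \hat d_i)$ holds for $i \notin B_1 \cup B_2$. The critical new identity is the factorization
\[
f_i = (w_i - a_i)(w_i - b'_i) \quad (i \in B_1 \setminus B_2), \qquad f_i = (w_i + a'_i)(w_i + b_i) \quad (i \in B_2 \setminus B_1).
\]
Since $a_i, b_i, a'_i, b'_i \in (0, 1)$ strictly and $w_i$ is an integer, in each product the two factors lie in a common open interval of length $1$ with integer endpoints, hence share the same nonzero sign, forcing $f_i > 0$ regardless of $w_i$. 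Combining, $\sum_i f_i = 0$ with every $f_i \ge 0$ forces each $f_i = 0$; strict positivity on $B_1 \triangle B_2$ forces $B_1 = B_2$, and the remaining vanishing then forces $w = 0$, so $\alpha = \beta = 0$ and $v = 0$, contradicting our choice of $v$. The main technical obstacle is spotting the two factorization identities in the mixed cases, which rely essentially on the strict inequalities $a_i, b_i, a'_i, b'_i \in (0, 1)$ supplied by the interior-point reduction in the first step.
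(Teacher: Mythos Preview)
Your proof is correct and follows essentially the same strategy as the paper: reduce to two points lying in the interiors of parallelepipeds $P(B_1)$ and $P(B_2)$, use the orthogonality of $\ker_\R(D)$ and $\Ima_\R(D^T)$ to obtain a sum that must vanish, and then perform a term-by-term sign analysis over the four cases $i\in B_1\cap B_2$, $i\in B_1\setminus B_2$, $i\in B_2\setminus B_1$, $i\notin B_1\cup B_2$. Your integer vector $w=(-\alpha,\beta)$ is exactly the sign-twisted vector the paper produces from its $(z_1,\dots,z_n)$, so your identity $\sum_i f_i=0$ is the negative of the paper's $\sum_i (d_i-z_i)(\widehat d_i+z_i)=0$.

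Two organizational differences are worth noting. First, you invert the order of the two steps: you perform the reduction to interior points of parallelepipeds \emph{first}, via the Lebesgue-density argument, and only then run the algebra; the paper does the algebra first (obtaining that equivalent points must lie on $\partial P(B)$ for some $B$) and then upgrades to $\partial T(D)$ by a short perturbation. Your ordering is arguably cleaner, since it avoids proving an intermediate statement you do not need. Second, your observation that in the mixed cases $f_i$ factors as $(w_i-a_i)(w_i-b'_i)$ or $(w_i+a'_i)(w_i+b_i)$, with both factors lying in the open unit interval $(w_i-1,w_i)$ or $(w_i,w_i+1)$, is a tidy repackaging of the paper's direct sign-chasing and makes the strict positivity transparent.
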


\begin{proof}
First, we show that two points of $T(D)$ can only be equivalent as elements of $\tilde \Sand(D)$ if they are each on the boundary of some $P(B)$. 

For some $B_1,B_2 \in \B(D)$, let $p_1$ and $p_2$ be interior points of $P(B_1)$ and $P(B_2)$ respectively. Using the notation and reasoning from Proposition~\ref{prop:nointersect}, we can write $p_1 - p_2$ as the vector whose first $r$ entries are given by 
\[\sum_{i=1}^{n} d_i\col i,\]

\noindent and whose last $n-r$ entries are given by 
\[\sum_{i=1}^{n} \widehat d_i\hcol i.\]

By Lemma~\ref{lem:whenequiv}, $p_1$ and $p_2$ are equivalent as elements of $\tilde S(D)$ if and only if:
\[ p_1 - p_2 = \D^T(z_1,\dots, z_{n})^T\]

\noindent for some $(z_1,\dots z_{n}) \in \Z^{n}$. 

Let $s_i$ be the restriction of the $i^{th}$ row of $\D$ to the first $r$ entries and $\widehat s_i$ be the restriction of the $i^{th}$ row of $\D$ to the last $n-r$ entries. Then, the first $r$ entries of
\[\D^T(z_1,\dots, z_{n})^T\]
\noindent are given by
\[ \sum_{i=1}^{n} z_is_i,\]
\noindent and the last $n-r$ entries are given by
\[\sum_{i=1}^{n} z_i\widehat s_i.\]
From the structure of $\D$, $s_i$ and $\col i$ as well as $\widehat s_i$ and $\hcol i$ are closely related. In particular, for $i \in [1,r]$, we have $s_i = \col i$ and $\widehat s_i = -\hcol i$. For $i \in [r+1,n]$, we have $s_i = -\col i$ and $\widehat s_i = \hcol i$. 

This means that the first $r$ entries of
\[\D^T(z_1,\dots z_r,-z_{r+1},\dots -z_{n})^T\]
\noindent are given by
\[\sum_{i=1}^{n} z_i \col i,\]
\noindent and the last $(n-r)$ entries are given by
\[\sum_{i=1}^{n} -z_i\hcol i.\]
Hence the points $p_1$ and $p_2$ are equivalent as elements of $\tilde \Sand(D)$ if and only if we have:
\[\sum_{i = 1}^{n}(d_i-z_i) \col i = \un 0 \hspace{ .4 cm} \text{ and } \hspace{ .4 cm} \sum_{i = 1}^{n}(\widehat d_i+z_i) \hcol i =\un 0.\]
By the same logic that we used for Proposition~\ref{prop:nointersect}, the coefficients of the first sum form an element of $\ker_\R(D)$ while the coefficients of the second form an element of $\Ima_\R(D^T)$. Lemma~\ref{lem:orthogonality} again tells us that their dot product is 0. In other words:
\[\sum_{i = 1}^{n}(d_i-z_i) \cdot (\widehat d_i + z_i) =0.\]
For each $i$, there are 4 possibilities:\\

\textbf{Case 1) $i \in B_1 \cap B_2$}:

$\widehat a_i = \widehat b_i = 0$, so $\widehat d_i = 0$. $0 < a_i,b_i <1$ so $-1 < d_i < 1$. If $z_i = 0$, then $(d_i-z_i) \cdot (\widehat d_i + z_i) = 0$. Otherwise, the two factors have a different sign and the product is negative.  

\textbf{Case 2) $i \in B_1 \setminus B_2$}:

$b_i = 0$, so $d_i = a_i$. This means that $0 < d_i <1$. $\widehat a_i = 0$, so $\widehat d_i = -\widehat b_i$. It follows that $-1 < \widehat d_i <0$. If $z_i > 0 $, then $d_i-z_i < 0$ and $\widehat d_i+ z_i > 0$. If $z_i \le 0$, then $d_i-z_i < 0$ and $\widehat d_i+ z_i > 0$. In either case, $(d_i-z_i) \cdot (\widehat d_i + z_i) < 0$.

\textbf{Case 3) $i \in B_2 \setminus B_1$}:

$a_i = 0$, so $d_i = -b_i$. This means that $-1 < d_i < 0$. $\widehat b_i = 0$, so $\widehat d_i = \widehat a_i$. It follows that $0 < \widehat d_i < 1$. If $z_i \ge 0 $, then $d_i-z_i < 0$ and $\widehat d_i+ z_i > 0$. If $z_i < 0$, then $d_i-z_i < 0$ and $\widehat d_i+ z_i > 0$. In either case, $(d_i-z_i) \cdot (\widehat d_i + z_i) < 0.$

\textbf{Case 4) $i \not\in B_1 \cup B_2$}:

$ a_i = b_i = 0$ so $d_i = 0$. $0 < \widehat a_i,\widehat b_i < 1$ so $-1 < \widehat d_i < 1$. If $z_i = 0$, then $(d_i-z_i) \cdot (\widehat d_i + z_i) = 0$. Otherwise, the two factors have a different sign and the product is negative. \\

In all four cases the product is negative, unless we are always in case 1 or case 4 and $z_i = 0$ for all $i$. However, if $z_i = 0$ for all $i$, then $p_1 = p_2$. Thus, our claim holds by contradiction.

We showed that two distinct points $p_1$ and $p_2$ of $T(D)$ that are equivalent as elements of $\tilde \Sand(D)$ must each lie on the boundary of some $P(B)$. We now show by contradiction that they are on both on the boundary of $T(D)$.

Assume that $p_1$ is an interior point of $T(D)$. Since $T(D)$ is the union of non-degenerate parallelepipeds, there is some vector $w\in \R^n$ such that for all sufficiently small $\epsilon>0$, $p_2 + \epsilon w$ is in $T(D)$ but not on the boundary of any $P(B)$. If we make $\epsilon$ small enough, $p_1 + \epsilon w$ must be in $T(D)$ as well, since $p_1$ is an interior point of $T(D)$ by assumption. Moreover, $p_1 + \epsilon w$ and $p_2+ \epsilon w$ are equivalent as elements of $\tilde \Sand(D)$. We get a contradiction because both points are in $T(D)$, but $p_2+\epsilon w$ is not on the boundary of any $P(B)$. This means that $p_1$ and $p_2$ must both be on the boundary of $T(D)$. 
\end{proof}

The next corollary shows that copies of $T(D)$ can be used to periodically tile $\R^{n}$.

\begin{coro}\label{cor:tiling}
The set of translates $T(D) + \D^T(z_1,\dots, z_{n})^T$ for all $(z_1,\dots, z_{n}) \in \Z^{n}$ cover all of $\R^{n}$ and only intersect at their boundaries. 
\end{coro}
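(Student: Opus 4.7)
The plan is to split the corollary into two parts: (i) distinct translates of $T(D)$ by vectors in $\Ima_\Z(\D^T)$ intersect only on their boundaries, and (ii) the translates cover all of $\R^n$. Part (i) is essentially immediate from Proposition~\ref{prop:lastDomino}. If a point $p$ lies in both $T(D) + u$ and $T(D) + v$ for distinct $u, v \in \Ima_\Z(\D^T)$, then $p-u$ and $p-v$ are two distinct points of $T(D)$ whose difference lies in $\Ima_\Z(\D^T)$, so they represent the same element of $\tilde\Sand(D)$ by Lemma~\ref{lem:whenequiv}. Proposition~\ref{prop:lastDomino} then forces each of $p-u$ and $p-v$ onto the boundary of $T(D)$, which means $p$ itself is on the boundary of both translates.

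For part (ii) I would use a volume-and-compactness argument. Since $\Ima_\Z(\D^T) = \Ima_\Z(D^T) \oplus \ker_\Z(D)$ is a direct sum of orthogonal sublattices of ranks $r$ and $n-r$, it is a full-rank sublattice of $\Z^n$, and by construction it has index $|\Sand(D)|$ in $\Z^n$. Hence the covolume of $\Ima_\Z(\D^T)$ in $\R^n$ equals $|\Sand(D)|$. Let $\pi \colon \R^n \to \R^n/\Ima_\Z(\D^T)$ be the quotient map. Part (i) shows that $\pi$ is injective on the interior of $T(D)$, so the image $\pi(T(D))$ has Lebesgue measure equal to $\mathrm{vol}(T(D))$, which by Corollary~\ref{cor:voltile} is also $|\Sand(D)|$, i.e.\ the total volume of the torus $\R^n/\Ima_\Z(\D^T)$.

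To convert this from a measure-theoretic statement to honest coverage, I would then use compactness. Since $T(D)$ is a finite union of compact parallelepipeds it is compact, so $\pi(T(D))$ is a compact, hence closed, subset of the torus having full measure. Its complement is therefore an open set of measure zero, and so must be empty. Consequently $\pi(T(D))$ equals the entire torus, which is exactly the statement that every point of $\R^n$ lies in some translate $T(D) + v$ with $v \in \Ima_\Z(\D^T)$. The main subtlety I expect is justifying that the volumes truly match, that is, ensuring the index computation $[\Z^n : \Ima_\Z(\D^T)] = |\Sand(D)|$ and the application of Proposition~\ref{prop:lastDomino} are clean enough that the density argument forces closed coverage rather than only covering up to a null set.
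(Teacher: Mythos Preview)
Your proposal is correct and follows essentially the same approach as the paper: part (i) is identical, and for part (ii) both arguments match volumes (vol$(T(D)) = |\Sand(D)| =$ covolume of $\Ima_\Z(\D^T)$) and then upgrade ``covers up to measure zero'' to full coverage via a closedness argument. The only cosmetic difference is that you phrase things in terms of the quotient torus and compactness of $\pi(T(D))$, whereas the paper works inside the explicit fundamental domain $\Pi_\circ(\D^T)$ and uses that $T(D)$ is closed to rule out a neighborhood of uncovered points---these are the same idea in different clothing.
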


\begin{proof}
Consider any point $p\in \R^{n}$. By Lemma~\ref{lem:whenequiv}, the points which are equivalent to $p$ as elements of $\tilde S(D)$ are those of the form $p + \D^T(z_1,\dots, z_{n})$ for $(z_1,\dots, z_{n}) \in \Z^{n}$. Since these are exactly the translates of $T(D)$, the condition that the translates do not intersect except at their boundaries follows directly from Proposition~\ref{prop:lastDomino}.

We also have to show that the translates cover all of $\R^{n}$ given that they do not overlap except at their boundaries. We first note that $\Pi_\circ(\D^T)$ must tile $\R^{n}$ under the same translation because for every $p \in \R^{n}$, there is a unique solution to $(\D^T)p' = p$ (in particular $p' = (\D^T)^{-1} p$). We can map each point of $T(D)$ to a point in $\Pi_\circ(\D^T$) by translating it by an integer combination of columns of $\D^T$. Let $t$ be this piecewise translation from $T(D) \to \Pi_\circ(\D^T)$. Each translation preserves the volume of the region we transform and the only overlap is from the boundary of $T(D)$, which is a 0 volume set. It follows that the volume of the image of $t$ is equal to the volume of $T(D)$. Since $\Pi_\circ(\D^T$) has the same volume as $T(D)$, the set of points that are not in the image of $t$ must have volume $0$. 

Let $p$ be a point of $\Pi_\circ(\D^T$) that is not in the image of $t$. The preimage of $p$ is the collection of points in the same equivalence class with respect to $\tilde \Sand(D)$. By assumption, none of these points are in $T(D)$. Since $T(D)$ is closed, this means that none of these points are limit points of $T(D)$ either, so there is a neighborhood of $p$ that is also not in the image of $t$. However, this neighborhood must have positive volume, which is a contradiction. 
\end{proof}

\begin{exam}\label{ex:2dim}
The simplest case is when $r=1$ and $n=2$. Here, $\D$ is of the form:
\[ \D = \begin{pmatrix}
1 & k \\
-k & 1 \\
\end{pmatrix}\]

\noindent for some integer $k$. When $k = 3$, we get the pattern in Figure~\ref{fig:tiling}.

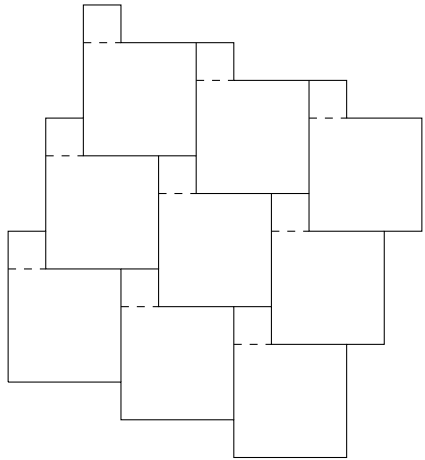
\begin{figure}
\begin{center}
\begin{tikzpicture}[scale = 0.5]
    \tikzstyle{every node} = [shape = none,fill=none,inner sep=1pt,minimum size = .1mm];

\draw (0,2) -- (0,6) --(1,6) -- (1,9)--(2,9) -- (2,12) -- (3,12) -- (3,11) -- (6,11) -- (6,10) -- (9,10) -- (9,9) -- (11,9) -- (11,6) -- (10,6) -- (10,3) -- (9,3) -- (9,0) -- (6,0) -- (6,1) -- (3,1) -- (3,2) -- (0,2);

\draw (1,6) -- (1,5) -- (3,5) -- (3,2);
\draw (3,5) -- (4,5) -- (4,4) -- (6,4) -- (6,1);
\draw (6,4) -- (7,4) -- (7,3) -- (9,3);
\draw (2,9) -- (2,8) -- (4,8) -- (4,5);
\draw (4,8) -- (5,8) -- (5,7) -- (7,7) -- (7,4);
\draw (7,7) -- (8,7) -- (8,6) -- (10,6);
\draw (5,11) -- (5,8);
\draw (8,10) -- (8,7);

\draw[dashed] (0,5) -- (1,5);
\draw[dashed] (1,8) -- (2,8);
\draw[dashed] (2,11) -- (3,11);
\draw[dashed] (3,4) -- (4,4);
\draw[dashed] (4,7) -- (5,7);
\draw[dashed] (5,10) -- (6,10);
\draw[dashed] (6,3) -- (7,3);
\draw[dashed] (7,6) -- (8,6);
\draw[dashed] (8,9) -- (9,9);
\end{tikzpicture}
\caption{Above are 9 copies of $T(D)$ for $D = \protect\begin{pmatrix} 1 & 3 \protect\end{pmatrix}$. 
The dashed lines indicate the boundary between the parallelepipeds that make up $T(D)$ while the solid lines indicate the boundary of translates of $T(D)$. We get a similar pattern whenever $r=1$ and $n=2$.}
\label{fig:tiling}
\end{center}
\end{figure}
\end{exam}

\begin{rema}
Because our tiling is of $n$-dimensional space, it is difficult to present more complicated examples. However, in Section~\ref{sec:Lower}, we will show that we can take an $r$-dimensional or $(n-r)$-dimensional slice of our tiling and get many of the same results. This will allow us to present more interesting tilings of 2-dimensional space (see Figure~\ref{fig:tilings}). 
\end{rema}

\section{Constructing the Sandpile to Basis Multijections}\label{sec:multi}
In order to define our multijections, we will need $T(D)$ and an appropriate $\R^{n}$ direction vector.

\begin{defi}\label{def:shifting}
A \textit{shifting vector} $\w = (\w_1,\dots,\w_{n})$ of $D$ is a vector in $\R^{n}$ that is not in the span of a facet of $P(B)$ for any $B \in \B(D)$. 
\end{defi}

In Section~\ref{sec:shifting}, we will show that a choice of shifting vector is equivalent to a choice of \textit{chamber} from a certain \textit{hyperplane arrangement}. We use the same notation that we used in the previous section and $D$ is still an $r \times n$ standard representative matrix. 

It will sometimes be useful to split our shifting vector into two smaller vectors. Consider the vectors $w = (w_1,\dots,w_r) \in \R^r$ and $\widehat w = (\widehat w_1,\dots, \widehat w_{n-r})\in \R^{n-r}$. We write $(w,\widehat w)$ for their concatenation, which is an $\R^n$ vector.
\begin{lemma}\label{lem:splitshift}
$(w,\widehat w)$ is a shifting vector for $D$ if and only if for all $B \in \B(D)$, $w$ does not lie in the span of any facet of $P_1(B)$ and $\widehat w$ does not lie in the span of any facet of $P_2(B)$. 
\end{lemma}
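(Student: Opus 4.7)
The plan is to unpack the definitions and reduce the statement to a direct observation about facets of a product of parallelepipeds. Since $P(B)$ is the direct product of $P_1(B)$ and $P_2(B)$, where (as visible in Example~\ref{ex:favorite}) $P_1(B)$ is embedded in the first $r$ coordinates $V_1 := \R^r \times \{0\}^{n-r}$ and $P_2(B)$ is embedded in the last $n-r$ coordinates $V_2 := \{0\}^r \times \R^{n-r}$. Note that the $r$ defining column vectors of $P_1(B)$ form a basis of $V_1$ (their matrix has determinant $\pm m(B) \ne 0$), and similarly the $n-r$ defining column vectors of $P_2(B)$ form a basis of $V_2$.

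First, I would describe the facets of $P(B)$ explicitly: they come in two flavors,
\[F_1 \times P_2(B) \qquad \text{and} \qquad P_1(B) \times F_2,\]
where $F_1$ ranges over facets of $P_1(B)$ (each is the translate of the $\R$-span of some $(r-1)$-subset of the defining columns of $P_1(B)$) and $F_2$ ranges over facets of $P_2(B)$. The linear span of a facet $F_1 \times P_2(B)$ is then $\spn(F_1) \oplus V_2$, because $P_2(B)$ already spans $V_2$. Likewise, $\spn(P_1(B) \times F_2) = V_1 \oplus \spn(F_2)$.

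Next, I would translate the membership condition. Writing our candidate vector as $(w,\widehat w)$ with $w \in V_1$ and $\widehat w \in V_2$, we have
\[(w,\widehat w) \in \spn(F_1) \oplus V_2 \iff w \in \spn(F_1),\]
since the $V_2$-component is unconstrained, and symmetrically $(w,\widehat w) \in V_1 \oplus \spn(F_2)$ if and only if $\widehat w \in \spn(F_2)$. Therefore $(w,\widehat w)$ fails to be a shifting vector for $D$ precisely when there exists some $B \in \B(D)$ and some facet $F$ of $P(B)$ with $(w,\widehat w) \in \spn(F)$, which by the above is equivalent to $w$ lying in the span of some facet of $P_1(B)$ or $\widehat w$ lying in the span of some facet of $P_2(B)$ for some $B$. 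Contrapositively, $(w,\widehat w)$ is a shifting vector if and only if neither of these occurs for any $B$, which is exactly the stated condition.

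I do not expect any real obstacle here; the argument is essentially bookkeeping about the product structure $P(B) = P_1(B) \times P_2(B)$ together with the fact that $P_1(B)$ and $P_2(B)$ are full-dimensional in $V_1$ and $V_2$ respectively. The only point requiring a line of care is justifying that every facet of the product is one of the two types listed, which follows because $P(B)$ is the Minkowski sum of its $n$ defining edge vectors and each such vector lies entirely in $V_1$ or entirely in $V_2$.
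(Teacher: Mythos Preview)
Your argument is correct. Both your proof and the paper's rely on the product structure $P(B)=P_1(B)\times P_2(B)$, but the presentations differ slightly. You work directly with the definition of a shifting vector: you classify the facets of the product as $F_1\times P_2(B)$ or $P_1(B)\times F_2$, compute their linear spans as $\spn(F_1)\oplus V_2$ and $V_1\oplus\spn(F_2)$, and read off the componentwise condition. The paper instead phrases the argument in terms of the coordinate projections of $z+\epsilon\w$, implicitly using that $\w$ lies in the span of a facet of $P(B)$ exactly when moving in direction $\w$ can keep a boundary point on that facet; this anticipates Definition~\ref{def:reps} but leaves the connection to the stated definition of shifting vector for the reader to supply. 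Your version is more self-contained and makes the facet structure of the product explicit, which is a small gain in clarity; the paper's version has the virtue of brevity and of foreshadowing how the shifting vector will actually be used.
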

\begin{proof}
By definition, a point is in $P(B)$ if and only if it is in $P_1(B)$ when restricted to the first $r$ coordinates and $P_2(B)$ when restricted to the last $(n-r)$ coordinates. The lemma follows from the fact that $z+ \epsilon w$ is $(z_1,\dots, z_r) + \epsilon (w_1,\dots, w_r)$ when restricted to the first $r$ coordinates and $(\widehat z_1,\dots,\widehat z_{n-r}) + \epsilon (\widehat w_1,\dots, \widehat w_{n-r})$ when restricted to the last $(n-r)$ coordinates.
\end{proof}

\begin{defi}\label{def:reps}
Let $\w = (w,\widehat w)$ be a shifting vector.
\begin{itemize}
\item For any $v \in \R^{n}$, $ v$ is a \textit{$\w$-representative} of $\tilde \Sand(D)$ if $ v+\epsilon \w \in T(D)$ for all sufficiently small $\epsilon>0$. If $ v+ \epsilon \w \in P(B)$, we say that $ v$ is \textit{$\w$-associated} with $B$. 

\item For any $z \in \Z^{n}$, $z$ is a \textit{$\w$-representative} of $\Sand(D)$ if $z+\epsilon \w \in T(D)$ for all sufficiently small $\epsilon>0$. If $z+ \epsilon \w \in P(B)$, we say that $z$ is \textit{$\w$-associated} with $B$. 

\item For any $v \in \R^r$ if $v+ \epsilon w \in P_1(B)$ for all sufficiently small $\epsilon>0$, we say that $v$ is \textit{$w$-associated} with $B$. 

\item For any $\widehat v \in \R^{n-r}$ if $\widehat v+ \epsilon \widehat w \in P_2(B)$ for all sufficiently small $\epsilon>0$, we say that $\widehat v$ is \textit{$\widehat w$-associated} with $B$. 
\end{itemize}
\end{defi}

\begin{lemma}\label{lem:splitassoc}
Suppose $\w = (w,\widehat w)$ is a shifting vector, $v \in \R^r$, and $\widehat v \in \R^{n-r}$. Then, $(v,\widehat v)$ is $\w$-associated with $B$ if and only if $v$ is $w$-associated with $B$ and $\widehat v$ is $\widehat w$-associated with $B$. 
\end{lemma}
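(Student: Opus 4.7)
The plan is to directly unwind the definitions, leveraging the product structure of $P(B)$. By Definition~\ref{def:p1p2p}, $P(B) = P_1(B) \times P_2(B)$, so for any $x \in \R^r$ and $\widehat x \in \R^{n-r}$, the concatenation $(x,\widehat x)$ lies in $P(B)$ if and only if $x \in P_1(B)$ and $\widehat x \in P_2(B)$. Applying this with $x = v + \epsilon w$ and $\widehat x = \widehat v + \epsilon \widehat w$, and noting that $(v,\widehat v) + \epsilon\w = (v + \epsilon w,\ \widehat v + \epsilon \widehat w)$, I obtain a pointwise-in-$\epsilon$ equivalence: for each fixed $\epsilon > 0$,
\[(v,\widehat v) + \epsilon \w \in P(B) \iff \bigl(v + \epsilon w \in P_1(B)\bigr) \text{ and } \bigl(\widehat v + \epsilon \widehat w \in P_2(B)\bigr).\]

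Next I would upgrade this pointwise equivalence to the ``for all sufficiently small $\epsilon$'' quantifier used in Definition~\ref{def:reps}. For the forward direction, if there exists $\epsilon_0 > 0$ such that $(v,\widehat v) + \epsilon\w \in P(B)$ for every $\epsilon \in (0,\epsilon_0)$, then the same $\epsilon_0$ simultaneously witnesses $v + \epsilon w \in P_1(B)$ and $\widehat v + \epsilon \widehat w \in P_2(B)$ on $(0,\epsilon_0)$, so $v$ is $w$-associated with $B$ and $\widehat v$ is $\widehat w$-associated with $B$. For the reverse direction, if $\epsilon_1,\epsilon_2 > 0$ are thresholds for the $P_1$-condition and $P_2$-condition respectively, then taking $\epsilon_0 = \min(\epsilon_1,\epsilon_2)$ makes both conditions hold on $(0,\epsilon_0)$, and the pointwise equivalence gives $(v,\widehat v) + \epsilon\w \in P(B)$ on that interval.

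There is no real obstacle here; the entire argument is a bookkeeping exercise that reduces to the product-of-parallelepipeds decomposition of $P(B)$ together with the fact that a finite intersection of cofinal intervals $(0,\epsilon_i)$ is again cofinal. The shifting-vector hypothesis is not even needed for the statement itself, although in practice Lemma~\ref{lem:splitshift} ensures the pieces $w$ and $\widehat w$ are well-behaved enough that being $w$- or $\widehat w$-associated with $B$ picks out a unique basis, which is implicitly what makes the lemma a useful tool in later arguments.
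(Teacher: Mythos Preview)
Your argument is correct and follows the same route as the paper: both proofs rest on the product decomposition $P(B)=P_1(B)\times P_2(B)$ together with the observation that $(v,\widehat v)+\epsilon\w=(v+\epsilon w,\ \widehat v+\epsilon\widehat w)$. The paper's version is terser and leaves the ``sufficiently small $\epsilon$'' bookkeeping implicit, whereas you spell out the $\min(\epsilon_1,\epsilon_2)$ step explicitly; otherwise the arguments are identical.
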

\begin{proof}
For any $\epsilon>0$, the first $r$ entries of $(v,\widehat v) + \epsilon \w$ are given by $v + \epsilon w$, and the last $n-r$ entries are given by $\widehat v + \epsilon \widehat w$. The lemma follows from the fact that $P_1(B)$ is $P(B)$ restricted to its first $r$ coordinates while $P_2(B)$ is $P(B)$ restricted to its last $n-r$ coordinates.
\end{proof}

\begin{lemma}\label{lem:associated}
Each $\w$-representative of $\tilde \Sand(D)$ or $\Sand(D)$ is $\w$-associated with exactly one $B \in \B(D)$. 
\end{lemma}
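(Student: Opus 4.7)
The plan is to establish existence and uniqueness of the associated basis separately. Since $\Z^{n} \subseteq \R^{n}$ and the definition of $\w$-associated is the same for vectors in either, it suffices to prove the lemma for $\tilde\Sand(D)$; the $\Sand(D)$ statement then follows automatically.

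For existence, the key observation is that each $P(B)$ is a closed convex polytope, cut out by finitely many linear inequalities. Therefore the set
\[
I_B := \{\epsilon > 0 : v + \epsilon \w \in P(B)\}
\]
is an intersection of finitely many half-lines in the variable $\epsilon$, and hence is an interval (possibly empty). By hypothesis $v$ is a $\w$-representative, so $v + \epsilon \w \in T(D) = \bigcup_{B \in \B(D)} P(B)$ for all $\epsilon$ in some interval $(0, \epsilon_0]$. Since $\B(D)$ is finite, at least one $I_B$ must contain $\epsilon$-values arbitrarily close to $0$; combined with the fact that $I_B$ is an interval, this forces $I_B \supseteq (0, \epsilon_B]$ for some $\epsilon_B > 0$, yielding a basis with which $v$ is $\w$-associated.

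For uniqueness, I will argue by contradiction. Suppose distinct $B_1, B_2 \in \B(D)$ both witness $\w$-association, so $v + \epsilon \w \in P(B_1) \cap P(B_2)$ for all $\epsilon \in (0, \min(\epsilon_{B_1}, \epsilon_{B_2})]$. By Proposition~\ref{prop:nointersect} this intersection is contained in the boundaries of both parallelepipeds, which are finite unions of facets. Applying the interval-plus-pigeonhole argument once more to the facets of $P(B_1)$, one obtains a single facet $F$ and a subinterval with $a < b$ on which $v + \epsilon \w \in F$ for all $\epsilon \in (a, b)$. Since $F$ is contained in an affine hyperplane, the direction $\w$ must lie in the linear span of that facet, contradicting the defining property of a shifting vector.

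The main obstacle is the passage from ``for every sufficiently small $\epsilon$, $v + \epsilon \w$ lies in \emph{some} $P(B)$'' to ``there is a \emph{single} $B$ that works on an entire interval $(0, \epsilon_B]$''. Without care, the basis containing $v + \epsilon \w$ could in principle depend on $\epsilon$. What resolves this is the elementary but essential observation that $I_B$ is an interval (a consequence of the convexity of $P(B)$), together with the finiteness of $\B(D)$. The same device reappears in the uniqueness step, where it lets us upgrade ``$v + \epsilon \w$ lies on the boundary of $P(B_1)$'' to ``$v + \epsilon \w$ lies on one specific facet'', at which point the shifting vector hypothesis closes the argument.
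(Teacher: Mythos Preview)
Your proof is correct and follows essentially the same route as the paper: reduce to $\tilde\Sand(D)$, use that $v+\epsilon\w$ lands in some $P(B)$, invoke the shifting vector hypothesis to force the point into the interior, and then apply Proposition~\ref{prop:nointersect} for uniqueness. The paper's version is considerably terser and does not spell out the convexity/pigeonhole step you carefully isolate (that some single $B$ must work on an entire interval $(0,\epsilon_B]$, and likewise that a single facet would have to absorb a whole subinterval), but this is exactly the content hidden in its one-line claim ``since $\w$ is not in the span of any facet of $P(B)$, $p+\epsilon\w$ must be in the interior of $P(B)$.'' Your version makes explicit what the paper leaves to the reader.
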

\begin{proof}
Since $\w$-representatives of $\Sand(D)$ are also $\w$-representatives of $\tilde \Sand(D)$, it suffices to prove the result for $\tilde \Sand(D)$. Let $p$ be a $\w$-representative of $\tilde \Sand(D)$. Because $T(D) = \bigcup P(B)$, we know that $p + \epsilon \w \in P(B)$ for some $B \in \B(D)$. Since $\w$ is not in the span of any facet of $P(B)$, $p + \epsilon \w$ must be in the interior of $P(B)$. By Proposition~\ref{prop:nointersect}, this is true for a unique $B$.\end{proof}

\begin{prop}\label{prop:justone}
For any shifting vector $\w$, there is exactly one $\w$-representative in $\R^{n}$ for each equivalence class of $\tilde \Sand(D)$ and exactly one $\w$-representative in $\Z^{n}$ for each equivalence class of $\Sand(D)$.
\end{prop}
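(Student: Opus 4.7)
The plan is to leverage the periodic tiling from Corollary~\ref{cor:tiling} to convert the question about $\w$-representatives into a question about which translate of $T(D)$ contains the slightly shifted point $v + \epsilon \w$. Existence will come from pushing $v$ infinitesimally in the direction $\w$ and absorbing the resulting lattice translation; uniqueness will follow from Proposition~\ref{prop:lastDomino}, which forces distinct equivalent points of $T(D)$ onto its boundary.

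For existence in the $\tilde \Sand(D)$ case, fix $v \in \R^n$ and consider the ray $\{v + t\w : t > 0\}$. By Corollary~\ref{cor:tiling}, $\R^n$ is tiled by the translates $T(D) + \D^T \lambda$ for $\lambda \in \Z^n$, whose boundaries consist of $\Ima_\Z(\D^T)$-translates of facets of the parallelepipeds $P(B)$. Since $\w$ lies in the span of no facet of any $P(B)$, it is transverse to every facet of every translated tile, so the ray meets these facets only at a discrete set of parameters. Hence for all $t$ in some small interval $(0,\epsilon_0)$, the point $v + t\w$ lies in the interior of a unique translate $T(D) + \D^T \lambda$, and $v' := v - \D^T \lambda$ is a $\w$-representative equivalent to $v$. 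For the $\Sand(D)$ case, note that if $v \in \Z^n$ then $v' \in \Z^n$ as well, since $\D$ has integer entries.

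For uniqueness, suppose $v_1, v_2$ are $\w$-representatives in the same equivalence class of $\tilde \Sand(D)$. For all sufficiently small $\epsilon > 0$, the points $v_1 + \epsilon \w$ and $v_2 + \epsilon \w$ both lie in $T(D)$ and are equivalent in $\tilde \Sand(D)$. Lemma~\ref{lem:associated} places each $v_i + \epsilon \w$ in the interior of some $P(B_i)$, which is contained in the interior of $T(D)$. Proposition~\ref{prop:lastDomino} then forces these two points to coincide, so $v_1 = v_2$. The corresponding $\Z^n$ uniqueness is inherited from the real statement, since $\Z^n$-equivalence implies $\R^n$-equivalence. The main technical subtlety I anticipate is the bookkeeping around facets of translated tiles versus facets of the original $P(B)$, but this dissolves once one notes that every facet of a translate $T(D) + \D^T \lambda$ is itself a lattice translate of a facet of some $P(B)$, and so has the same linear span and is transverse to $\w$ by Definition~\ref{def:shifting}.
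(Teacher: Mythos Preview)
Your argument is correct and follows essentially the same route as the paper's own proof: both rely on Corollary~\ref{cor:tiling} for existence and on Proposition~\ref{prop:lastDomino} (together with the fact that $v+\epsilon\w$ lands in the interior of some $P(B)$) for uniqueness. Your presentation is slightly more streamlined in that you avoid the paper's case split between interior and boundary points of $T(D)$ by pushing every point along $\w$ from the outset; the paper instead first translates into $T(D)$ and only invokes the $\epsilon$-shift when the translate lands on the boundary.
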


\begin{proof}
The second result is a direct corollary of the first (and could also be proven with an enumerative argument). By Corollary~\ref{cor:tiling}, every point $ p \in \R^{n}$ lies on some translation of $T(D)$ by an integer linear combination of the rows of $\D$. We can translate this point to a point on $T(D)$ without changing the equivalence class with respect to $\tilde \Sand(D)$. If $p$ maps to an interior point $p'$ of $T(D)$, then by Proposition~\ref{prop:lastDomino}, this is the unique point on $T(D)$ that is equivalent to $ p$. Furthermore, since $ p'$ is in the interior of $T(D)$, $ p'$ is always a $\w$-representative of $\tilde \Sand(D)$ regardless of $\w$.

If $ p$ maps to a boundary point of $T(D)$, then by Proposition~\ref{prop:lastDomino}, any point of $T(D)$ that is in the same $\tilde \Sand(D)$ equivalence class must also lie on the boundary of $T(D)$. Label these points as $\{ p_1,\dots, p_k\}$. We need to show that exactly one of these points is a $\w$-representative. 

By the condition that $\w$ is not in the span of any facet of $T(D)$, for all sufficiently small $\epsilon>0$, $ p_i + \epsilon \w$ must not lie on the boundary of $T(D)$ for any $i$. If $ p_i + \epsilon \w$ and $ p_j + \epsilon \w$ are both in $T(D)$ for $i \not= j$, then these are two distinct points in the interior of $T(D)$ that are equivalent as elements of $\tilde \Sand(D)$. This is impossible by Proposition~\ref{prop:lastDomino}.  

We have shown uniqueness, so we just need existence. Because $\w$ is not in the span of any facet of $T(D)$, we can choose $\epsilon>0$ so that all points between $p$ and $ p + \w\epsilon$ map to interior points of $T(D)$. Let $p'$ be the point mapped to by $ p + \w\epsilon$. Then, $p' - \epsilon \w$ must be equivalent to $p$ with respect to $\tilde \Sand(D)$. By our condition on $\epsilon$, we see that this point is a $\w$-representative. 
\end{proof}

\begin{prop}\label{prop:Ehrhart}
For any shifting vector $\w$, and for any $B \in \B(D)$, there are exactly $m(B)^2$ $\w$-representatives of $\Sand(D)$ that are $\w$-associated with $B$.
\end{prop}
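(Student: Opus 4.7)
The plan is Ehrhart-style: I will identify the set of $\w$-representatives $\w$-associated with $B$ with the lattice points of a ``shifted half-open'' version of $P(B)$, and count these by viewing that half-open parallelepiped as a fundamental domain for an explicit integer sublattice. Lemma~\ref{lem:splitassoc}, together with the product structure $P(B) = P_1(B) \times P_2(B)$ from Definition~\ref{def:p1p2p}, splits the count into a product of two lower-dimensional counts.

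First, since $P(B) \subseteq T(D)$, every integer point $\w$-associated with $B$ is automatically a $\w$-representative of $\Sand(D)$, so it suffices to enumerate the set $\{z \in \Z^n : z + \epsilon \w \in P(B) \text{ for all small } \epsilon > 0\}$. Writing $\w = (w, \widehat w)$ and invoking Lemma~\ref{lem:splitassoc}, this count factors as the product of $|\{v \in \Z^r : v \text{ is } w\text{-associated with } B\}|$ and $|\{\widehat v \in \Z^{n-r} : \widehat v \text{ is } \widehat w\text{-associated with } B\}|$; I claim each factor equals $m(B)$.

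For the first factor, Lemma~\ref{lem:splitshift} guarantees that $w$ is not in the span of any facet of $P_1(B)$, so the set of $w$-associated integer points consists of the interior lattice points of $P_1(B)$ together with the boundary lattice points at which $w$ points strictly inward. This is a half-open parallelepiped, and a direct analogue of Proposition~\ref{prop:lastDomino} and Corollary~\ref{cor:tiling}, specialized to the single parallelepiped $P_1(B)$, shows that it is a fundamental domain for the sublattice $L_1 \subseteq \Z^r$ generated by the columns of $D$ indexed by $B$. Proposition~\ref{cokersize} then gives exactly $[\Z^r : L_1] = m(B)$ integer points, since by Definition~\ref{def:bandm} $m(B)$ is the magnitude of the determinant of $D$ restricted to $B$. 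An identical argument with $\widehat D$ and $\widehat w$ handles the second factor, relying on Lemma~\ref{lem:pipedvol} to ensure that the corresponding submatrix of $\widehat D$ has determinant of magnitude $m(B)$. Multiplying yields $m(B)^2$.

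The main obstacle is the ``fundamental domain'' claim for the shifted half-open parallelepiped, but this is a routine specialization of the tiling argument already given for $T(D)$: the closed translates of $P_1(B)$ by $L_1$ tile $\R^r$ with only boundary overlaps, and the generic direction $w$ selects a unique translate at each boundary point. Alternatively, one could avoid the factorization by working in $\R^n$ throughout: the columns of $D$ indexed by $B$ (padded with zeros in the last $n-r$ coordinates), together with the columns of $\widehat D$ indexed by $[n]\setminus B$ (padded with zeros in the first $r$ coordinates), assemble into an $n \times n$ block-diagonal matrix of determinant $\pm m(B)^2$, and the same Ehrhart-style counting applied directly in $\R^n$ then yields the result in one step.
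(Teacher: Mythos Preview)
Your proposal is correct and follows essentially the same approach as the paper. Both arguments split $\w=(w,\widehat w)$, identify the set of points $\w$-associated with $B$ as a half-open parallelepiped built from $P_1(B)\times P_2(B)$, and count its lattice points via the determinant. The only stylistic difference is that the paper writes $w=\sum b_i x_{k_i}$ explicitly and reads off which facets are open or closed from the signs of the $b_i$ (then applies the Ehrhart lemma directly), whereas you phrase the same conclusion as ``the shifted half-open parallelepiped is a fundamental domain for the column lattice $L_1$'' and invoke Proposition~\ref{cokersize}; the paper's explicit coordinate computation is exactly what makes your fundamental-domain claim precise.
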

To prove this result, we apply the following lemma from Ehrhart Theory:
\begin{lemma}[{\cite[Lemma 9.2]{Beck}}]\label{lem:Ehrhart}
For any integer matrix $M$, the number of integer points in the half-open fundamental parallelepiped $\Pi_\circ(M)$ is equal to its volume (the magnitude of $\det(M)$). 
\end{lemma}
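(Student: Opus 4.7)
The plan is to exhibit a bijection between $\Pi_\circ(M) \cap \Z^k$ and the cokernel $\Z^k / \Lambda$, where $\Lambda = \Ima_\Z(M)$ denotes the lattice generated by the columns of $M$. Since Proposition~\ref{cokersize} identifies $|\Z^k/\Lambda|$ with $|\det(M)|$, and the volume of $\Pi_\circ(M)$ equals $|\det(M)|$ by the standard change-of-variables formula, establishing such a bijection finishes the proof. Throughout I assume $\det(M) \neq 0$; the degenerate case can be excluded since a half-open fundamental parallelepiped of volume zero corresponds to a noninvertible $M$, which does not arise in the intended applications.

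The content of the bijection is that $\Pi_\circ(M)$ serves as a fundamental domain for the translation action of $\Lambda$ on $\R^k$. First I would show that every $p \in \R^k$ admits a unique decomposition $p = q + \ell$ with $q \in \Pi_\circ(M)$ and $\ell \in \Lambda$. Writing $p = Mt$ for the unique $t \in \R^k$ and splitting each coordinate as $t_i = \lfloor t_i \rfloor + \{t_i\}$, we obtain $p = M\lfloor t \rfloor + M\{t\}$, which is such a decomposition. Uniqueness follows because any two decompositions differ by an element of $\Lambda \cap \Pi_\circ(M)$, and the half-open condition $0 \le a_i < 1$ forces this intersection to be $\{\un 0\}$: an integer combination of the columns of $M$ that also lies in $\Pi_\circ(M)$ must have all coefficients equal to $0$, since $M$ is nonsingular and the only integer in $[0,1)$ is $0$.

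Next, since $\Lambda \subseteq \Z^k$, the decomposition $p = q + \ell$ has $p \in \Z^k$ if and only if $q \in \Z^k$. Consequently the composition $\Pi_\circ(M) \cap \Z^k \hookrightarrow \Z^k \twoheadrightarrow \Z^k/\Lambda$ is a bijection: surjectivity is the existence part of the decomposition applied to integer points, and injectivity is the uniqueness part. Combining this bijection with Proposition~\ref{cokersize} yields $|\Pi_\circ(M) \cap \Z^k| = |\Z^k/\Lambda| = |\det(M)|$, which is precisely the volume of $\Pi_\circ(M)$.

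The only step requiring real care is the uniqueness of decomposition, which is the higher-dimensional analogue of choosing $[0,1)$ as a fundamental interval for $\Z \subset \R$; the half-open condition in Definition~\ref{def:parpip} is calibrated for exactly this purpose. I do not anticipate a deeper obstacle — this is the classical fundamental-domain argument for lattices in $\R^k$.
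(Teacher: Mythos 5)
The paper does not prove this lemma at all---it is quoted verbatim from Beck--Robins \cite[Lemma 9.2]{Beck} and used as a black box---so there is no in-paper proof to compare against. Your fundamental-domain argument is the standard proof and it is correct: the translates of $\Pi_\circ(M)$ by $\Lambda = \Ima_\Z(M)$ partition $\R^k$, integrality of $M$ makes the decomposition $p = q+\ell$ respect $\Z^k$, and the resulting bijection $\Pi_\circ(M)\cap\Z^k \to \Z^k/\Lambda$ reduces the count to Proposition~\ref{cokersize} (which applies because the columns of a nonsingular integer matrix are an integral basis of the lattice they generate). One small imprecision in the uniqueness step: the difference of two points of $\Pi_\circ(M)$ does not lie in $\Lambda\cap\Pi_\circ(M)$ but in $M\cdot(-1,1)^k$; the argument survives unchanged because the only integer in $(-1,1)$ is $0$, but you should state it that way. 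Your decision to exclude $\det(M)=0$ is not merely convenient---the lemma as literally stated (``any integer matrix'') is false for singular $M$ (e.g.\ a $1\times 1$ zero matrix gives one lattice point and volume zero), and the paper only ever invokes it for matrices restricted to a basis, where the determinant is nonzero by definition. So your proof is sound, fills in a citation the paper leaves external, and even flags a hypothesis the paper's phrasing omits.
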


\begin{proof}[Proof of Proposition~\ref{prop:Ehrhart}]
For some $B \in \B(D)$, let $\{\colB 1,\dots, \colB r\}$ be the columns of $D$ corresponding to $B$. Decompose $\w$ into the pair $(w,\widehat w)$ with $w \in \R^r$ and $\widehat w \in \R^{n-r}$. A point $v \in P_1(B)$ can be written as
\[v = \sum_{i=1}^{r} a_i\colB i,\]

\noindent with $0 \le a_i \le 1$ for all $i$. Because the $\colB i$ are linearly independent (otherwise $B$ would not be a basis), there is a unique way to write $w$ in the form:
\[ w = \sum_{i=1}^{r} b_i\colB i,\]

\noindent such that each $b_i \in \R$. By Lemma~\ref{lem:splitshift}, $w$ is not in the span of any facet of $P(B)$. This means that that $b_i \not= 0$ for all $i\in [r]$. For any $\epsilon \in \R$, we have:
\[v+\epsilon w = \sum_i^{n} (a_i + \epsilon b_i)\colB i.\]

From here, we see that $v$ is $w$-associated with $B$ if and only if $a_i \in (0,1]$ for $b_i < 0$ and $a_i \in [0,1)$ for $b_i > 0$. This region is the integer translation of a half-open fundamental parallelepiped with volume equal to the volume of $P_1(B)$. By an analogous line of reasoning, the points which are $\widehat w$-associated with $B$ form the integer translation of a half-open fundamental parallelepiped with volume equal to the volume of $P_2(B)$. It follows that the set of points that are $\w$-associated with $B$ is the direct product of these two regions: the integer translate of a half open parallelepiped with volume equal to the volume of $P(B)$.

By Lemma~\ref{lem:Ehrhart}, the number of integer points in this region is equal to this volume, and the integer translation does not change the number of integer points. Finally, by Lemma~\ref{lem:pipedvol}, the volume is $m(B)^2$, completing the proof. 
\end{proof}

We now define a function $\tilde f_{\w}$ from $\tilde \Sand(D) \to \B(D)$ given a shifting vector $\w$. For any $s \in \tilde \Sand(D)$, we first take the $\w$-representative $z$ of $s$ (which is unique by Proposition~\ref{prop:justone}). Then, we let $\tilde f_{\w}(s) = B$, where $B$ is the $\w$-associated basis of $z$ (which is unique by Lemma~\ref{lem:associated}). 

\begin{defi}\label{def:restrictedf}
$f_{\w}$ is $\tilde f_{\w}$ (as defined above) but with its domain restricted to $\Sand(D)$. 
\end{defi}

The following theorem is the main result of this paper.

\begin{theorem}\label{thm:Hellyeah}
For any $B\in \B(D)$, we have $|f_{\w}^{-1}(B)| = m(B)^2$.
\end{theorem}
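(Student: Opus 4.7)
The plan is to assemble the theorem as a direct consequence of the three preparatory results: Proposition~\ref{prop:justone} (unique $\w$-representative in each class), Lemma~\ref{lem:associated} (each $\w$-representative is $\w$-associated with a unique basis), and Proposition~\ref{prop:Ehrhart} (the count of $\w$-representatives of $\Sand(D)$ that are $\w$-associated with a given $B$ is exactly $m(B)^2$). All of the substantive geometric content---the tiling property from Corollary~\ref{cor:tiling} and the Ehrhart-style counting argument---has already been deployed, so what remains is a clean bookkeeping step.

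First I would reinterpret $f_{\w}^{-1}(B)$ in terms of representatives. By Proposition~\ref{prop:justone}, the assignment sending $s \in \Sand(D)$ to its unique $\w$-representative $z \in \Z^{n}$ is a bijection from $\Sand(D)$ to the set $R_{\w} \subset \Z^n$ of $\w$-representatives. By Definition~\ref{def:restrictedf}, $f_{\w}(s) = B$ precisely when this $z$ is $\w$-associated with $B$. Hence the restriction of this bijection to $f_{\w}^{-1}(B)$ identifies the fiber with the set $R_{\w}(B) \subset R_{\w}$ of $\w$-representatives of $\Sand(D)$ that are $\w$-associated with $B$.

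Then I would finish by invoking Proposition~\ref{prop:Ehrhart}, which asserts exactly that $|R_{\w}(B)| = m(B)^2$, yielding $|f_{\w}^{-1}(B)| = m(B)^2$.

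There is no real obstacle at this stage; the argument is essentially an unwinding of definitions together with the three cited results. The one thing worth being careful about is to explicitly use the \emph{uniqueness} clause of Proposition~\ref{prop:justone}, rather than just existence---this is what ensures that the correspondence between $f_{\w}^{-1}(B)$ and $R_{\w}(B)$ is a bijection (different sandpile classes cannot share a $\w$-representative), and hence that the count of the fiber agrees with the count of $\w$-representatives, rather than being merely bounded above or below by it.
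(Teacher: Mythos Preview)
Your proposal is correct and essentially matches the paper's own proof: both derive the theorem by combining Proposition~\ref{prop:justone} (uniqueness of the $\w$-representative, so the fiber is in bijection with a set of representatives) with Proposition~\ref{prop:Ehrhart} (the representative count is $m(B)^2$). The only cosmetic difference is that for well-definedness of $f_{\w}$ you invoke Lemma~\ref{lem:associated} while the paper cites Proposition~\ref{prop:nointersect}; since Lemma~\ref{lem:associated} is itself proved from Proposition~\ref{prop:nointersect}, this is the same argument.
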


\begin{proof}
We showed in Propositions~\ref{prop:justone} and~\ref{prop:nointersect} that $f_{\w}$ is a well-defined map from $\Sand(D)$ to $\B(D)$. The fact that $|f_{\w}^{-1}(B)| = m(B)^2$ is a corollary of Proposition~\ref{prop:Ehrhart}.
\end{proof}

\begin{exam}\label{ex:fave2}
Consider the matrix and associated tile from Example~\ref{ex:favorite}. One can show that $\w = (1,1,1)$ satisfies the requirements of a shifting vector. There are 14 different $\w$-representatives of $\Sand(D)$ given in the list below:
\begin{align*}
\{&(0,0,0),(0,0,-1),(1,0,-1),(1,1,-1),(2,1,-1),(2,2,-1),(0,0,-2),\\
&(1,0,-2),(1,1,-2),(2,1,-2),(2,2,-2),(0,0,-3),(1,1,-3),(2,2,-3)\}.
\end{align*}
Furthermore, we have:
\begin{align*}
f_\w^{-1}(\{1,2\}) = \{&(0,0,0)\}.\\
f_\w^{-1}(\{1,3\}) = \{&(1,0,-1),(2,1,-1),(1,0,-2),(2,1,-2)\}.\\
f_\w^{-1}(\{2,3\}) = \{&(0,0,-1),(1,1,-1),(2,2,-1),(0,0,-2),(1,1,-2),\\
&(2,2,-2),(0,0,-3),(1,1,-3),(2,2,-3)\}.\\
\end{align*}
\noindent where each $\w$-representative is shorthand for ``the equivalence class of $\Sand(D)$ containing this $\w$-representative''. We can confirm that $f_{\w}$ is a multijection by noting that:
\begin{align*}
|f_\w^{-1}(\{1,2\})| &= 1 = m(\{1,2\})^2.\\
|f_\w^{-1}(\{1,3\})| &= 4 = m(\{1,3\})^2.\\
|f_\w^{-1}(\{2,3\})| &= 9 = m(\{2,3\})^2.\\
\end{align*}

If we use a different shifting vector, some of our representatives may change. For example, for $\w' = (-1,2,-2)$, we have:
\begin{align*}
f_{\w'}^{-1}(\{1,2\}) = \{&(1,0,1)\}.\\
f_{\w'}^{-1}(\{1,3\}) = \{&(1,0,0),(2,1,0),(1,0,-1),(2,1,-1)\}.\\
f_{\w'}^{-1}(\{2,3\}) = \{&(3,2,0),(1,1,0),(2,2,0),(3,2,-1),(1,1,-1),\\
&(2,2,-1),(3,2,-2),(1,1,-2),(2,2,-2)\}.\\
\end{align*}

Note that interior points of $P(B)$ are always associated with $B$, but boundary points depend on the shifting vector. 
\end{exam}

\section{Lower-Dimensional Representatives}\label{sec:Lower}

In Section~\ref{sec:tiling}, we showed how to construct a tiling of $\R^{n}$ and then in Section~\ref{sec:multi}, we used this tiling to produce a set of representatives for $\Sand(D)$ (see Theorem~\ref{thm:Hellyeah}). In this section, we show how to use the tiling of $\R^{n}$ to produce a tiling of $\R^r$ or $\R^{n-r}$ that also (given a shifting vector) produces a set of representatives of $\Sand(D)$. The representatives associated with the tiling of $\R^r$ all have zero in their last $n-r$ entries while the representatives associated with the tiling of $\R^{n-r}$ all have zero in their first $r$ entries. However, even though the representatives of $\Sand(D)$ change, the multijection does not. 

One benefit of this alternate construction is that it is often easier to work in lower dimensional space. In particular, we are now able to produce a wide variety of tilings of $\R^2$ (see Figure~\ref{fig:tilings}). With our original map, all tilings of $\R^2$ were similar to the one given in Example~\ref{ex:2dim}.

The main tool we use in this section is the following lemma. 

\begin{lemma}\label{lem:lowerdim}
Let $D$ be the standard representative matrix
\[ D = \begin{pmatrix}
I_n &M \\
\end{pmatrix}\]

\noindent and let $z = (z_1,\dots, z_r,\widehat z_1,\dots, \widehat z_{n-r})^T\in \Z^{n}$. Then, $z$ is equivalent, with respect to $\Sand(D)$, to the vector whose first $r$ entries are given by
\[(z_1,\dots, z_r)^T + M^T(\widehat z_1,\dots,\widehat z_{n-r})^T,\]

\noindent and whose last $(n-r)$ entries are zero. 

$z$ is also equivalent, with respect to $\Sand(D)$, to the vector whose first $r$ entries are zero and whose last $(n-r)$ entries are given by
\[(\widehat z_1,\dots, \widehat z_{n-r})^T - M(z_1,\dots,z_r)^T.\]

\end{lemma}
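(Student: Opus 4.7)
The plan is to apply Lemma~\ref{lem:whenequiv}, which reduces each claimed equivalence to exhibiting an explicit integer column combination of $\D^T$ that, when added to $z$, produces the target vector. The block form
\[\D^T = \begin{pmatrix} I_r & -M \\ M^T & I_{n-r}\end{pmatrix}\]
shows that every element of $\Ima_\Z(\D^T)$ is of the shape $\bigl(a - Mb,\ M^T a + b\bigr)^T$ for $(a,b) \in \Z^r \oplus \Z^{n-r}$. To zero out either half of $z$, one need only select the appropriate $(a,b)$.

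For the first equivalence, the plan is to pick $a = \un 0$ and $b = -(\what z_1,\dots,\what z_{n-r})^T$. The resulting lattice element has last block $-(\what z_1,\dots,\what z_{n-r})^T$, which cancels the last block of $z$, and first block $M(\what z_1,\dots,\what z_{n-r})^T$. Adding this element to $z$ therefore leaves the last $n-r$ entries equal to $\un 0$ and the first $r$ entries equal to $(z_1,\dots,z_r)^T + M(\what z_1,\dots,\what z_{n-r})^T$, matching the stated formula (dimensional considerations pin down which of $M$ or $M^T$ appears in that block, since $\what z$ has length $n-r$).

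For the second equivalence the plan is symmetric: take $b = \un 0$ and $a = -(z_1,\dots,z_r)^T$. The corresponding element of $\Ima_\Z(\D^T)$ has first block $-(z_1,\dots,z_r)^T$ and last block $-M^T(z_1,\dots,z_r)^T$, so adding it to $z$ produces a vector whose first $r$ entries are $\un 0$ and whose last $n-r$ entries are $(\what z_1,\dots,\what z_{n-r})^T - M^T(z_1,\dots,z_r)^T$, as claimed.

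No step in this argument should pose a real obstacle: once the block decomposition of $\D^T$ is written down, each equivalence is witnessed by a single explicit column combination (the last $n-r$ columns for the first claim, the first $r$ columns for the second), and the rest is just block matrix bookkeeping. The only care required is tracking which of $M$ and $M^T$ appears in each block so that the formulas agree dimensionally with the stated lemma.
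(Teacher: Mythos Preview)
Your argument is correct and is essentially the paper's own proof: the paper simply observes that the two target vectors equal $z-\D^T(0,\dots,0,\widehat z_1,\dots,\widehat z_{n-r})^T$ and $z-\D^T(z_1,\dots,z_r,0,\dots,0)^T$ and invokes Lemma~\ref{lem:whenequiv}, which is exactly your choice of $(a,b)$ in each case. Your remark that dimensional considerations force $M$ (not $M^T$) in the first block and $M^T$ (not $M$) in the second is well taken; the statement as printed has the two transposes swapped, and your computation gives the correct versions.
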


\begin{proof}

The desired vectors are equal to 
\[z - \D^T(0,\dots, 0,\widehat z_1,\dots,\widehat z_{n-r})^T\]
\noindent and 
\[z - \D^T(z_1,\dots,z_r,0,\dots, 0)^T\] \noindent respectively. The lemma follows from Lemma~\ref{lem:whenequiv}.
\end{proof}

We also introduce two alternative integral bases for $\Sand(D)$ which will be useful when working in lower dimensions. 
\begin{prop}\label{prop:otherbases}
The rows of the following matrices are each integral bases for $\Ima_\Z(D^T) \oplus \ker_\Z(D)$:
\[\D' = \begin{pmatrix}
I_r & M\\
0 & \widehat D \widehat D^T\\
\end{pmatrix}
\hspace{4cm}
\D'' = \begin{pmatrix}
D D^T & 0\\
-M^T & I_{n-r}\\
\end{pmatrix}.\]
\end{prop}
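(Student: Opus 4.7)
The plan is to show that $\D'$ and $\D''$ each arise from $\D$ by a unimodular integer row operation. Lemma~\ref{lem:flowofcut} already tells us that the rows of $\D$ form an integral basis of $\Ima_\Z(D^T) \oplus \ker_\Z(D) = \Ima_\Z(\D^T)$. Since left-multiplication by a unimodular integer matrix preserves the integer row span, once we establish unimodular equivalence we will know that the rows of $\D'$ and $\D''$ integrally generate the same lattice; as each matrix is $n \times n$ and the lattice has rank $n$, such a generating set must in fact be an integral basis.

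The key identity for $\D'$ is
\[
\widehat D \widehat D^T = \begin{pmatrix}-M^T & I_{n-r}\end{pmatrix}\begin{pmatrix}-M \\ I_{n-r}\end{pmatrix} = I_{n-r} + M^T M,
\]
which, given the block form of $\D$, motivates the lower-triangular elementary matrix $E = \begin{pmatrix} I_r & 0 \\ M^T & I_{n-r}\end{pmatrix}$. A direct block computation then yields $E\D = \D'$: the top two blocks are unchanged, the bottom-left block becomes $M^T - M^T = 0$, and the bottom-right block becomes $M^T M + I_{n-r} = \widehat D \widehat D^T$. Symmetrically, $DD^T = I_r + MM^T$ suggests $E' = \begin{pmatrix} I_r & -M \\ 0 & I_{n-r}\end{pmatrix}$, and an analogous computation gives $E'\D = \D''$. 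Both $E$ and $E'$ are triangular with ones on the diagonal, so $\det E = \det E' = 1$ and each is unimodular over $\Z$.

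The closest thing to an obstacle is the standard observation that a generating set of $n$ elements for a free $\Z$-module of rank $n$ is automatically an integral basis: expressing the generators in terms of any known integral basis produces an $n \times n$ integer transition matrix, and the existence of an integer inverse forces its determinant to be $\pm 1$ and its rows to be $\Z$-linearly independent. The other point that requires a bit of care is keeping track of the signs in $\widehat D = \begin{pmatrix}-M^T & I_{n-r}\end{pmatrix}$ when writing down $E$ and $E'$; once these are chosen correctly, the rest of the argument is a routine two-by-two block matrix multiplication.
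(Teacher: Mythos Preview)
Your proof is correct and follows essentially the same approach as the paper: exhibit unimodular integer matrices (your $E$ and $E'$ are exactly the paper's $U'$ and $U''$) whose product with $\D$ yields $\D'$ and $\D''$, and then use that such a multiplication preserves the integer row lattice. If anything, you are slightly more careful in writing $E\D=\D'$ and $E'\D=\D''$ as \emph{left} multiplications, which is the side on which unimodular multiplication preserves the row span.
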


\begin{proof}
Consider the following matrices:
\[U' = \begin{pmatrix}
I_r & 0\\
M^T & I_{n-r}\\
\end{pmatrix}
\hspace{4cm}
U'' = \begin{pmatrix}
I_r & -M\\
0 & I_{n-r}\\
\end{pmatrix}.\]
By the equalities $M^T M + I_{n-r} = \widehat D \widehat D^T$ and $M M^T + I_r =  D D^T$, we have $\D' = \D U'$ and $\D'' = \D U''$. Furthermore, $U'$ and $U''$ each have determinant 1 because they are triangular with ones along the diagonal. Thus, the proposition follows by the fact that the row lattice of a matrix doesn't change after multiplying by an integer matrix of determinant $1$. 
\end{proof}

Recall from Definition~\ref{def:p1p2p} that for any $B \in \B(D)$, we have parallelepipeds $P_1(B)$, $P_2(B)$, and $P(B)$, where $P(B)$ is the direct product of $P_1(B)$ and $P_2(B)$. Consider the vectors $w = (w_1,\dots,w_r) \in \R^r$, $\widehat w = (\widehat w_1,\dots, \widehat w_{n-r})\in \R^{n-r}$, and $\w = (w,\widehat w)$. Recall from Lemma~\ref{lem:splitshift} that $(w,\widehat w)$ is a shifting vector if $w$ is not in the span of any facet of $P_1(B)$ and $\widehat w$ is not in the span of any facet of $P_2(B)$. 

By a slight adjustment of Proposition~\ref{prop:Ehrhart}, one can show that there are $m(B)$ integer vectors $w$-associated with $P_1(B)$ and $m(B)$ integer vectors $\widehat w$-associated with $P_2(B)$. We now show how to construct an $r$-dimensional tile and an $(n-r)$-dimensional tile. For both constructions, we use a standard representative matrix $D$ and a shifting vector $(w,\widehat w) = (w_1,\dots, w_r, \widehat w_1,\dots, \widehat w_{n-r})$. 

\begin{defi}
\[ T'(D) = \bigcup_{B \in \B(D)} \left(\bigcup_{z\in \Z^{n-r} \text{ $\widehat w$-associated with } P_2(B)} \left(P_1(B) + M^T z^T\right)\right)\]
\end{defi}
 $T'(D)$ is made up of $m(B)$ parallelepipeds for each $B \in \B(D)$ and depends on $(\widehat w_1,\dots, \widehat w_{n-r})$ but not $(w_1,\dots, w_r)$. Figure~\ref{fig:T'D} gives an example of $T'(D)$.

\begin{defi}
\[ T''(D) = \bigcup_{B \in \B(D)} \left(\bigcup_{ z\in \Z^r \text{ $w$-associated with } P_1(B)} \left(P_2(B) - M z^T\right)\right)\]
\end{defi}

$T''(D)$ is made up of $m(B)$ parallelepipeds for each $B \in \B(D)$ and depends on $(w_1,\dots, w_r)$ but not $(\widehat w_1,\dots,\widehat w_{n-r})$. Figure~\ref{fig:T''D} gives an example of $T''(D)$.

The following theorem says that $T'(D)$ and $T''(D)$ have many similar properties to $T(D)$. This is the main result of this section.
\begin{theorem}
\hspace{2pt}

\begin{itemize}
    \item The parallelepipeds that make up $T'(D)$ only intersect at their boundaries. 
    \item The parallelepipeds that make up $T''(D)$ only intersect at their boundaries. 
    \item The set of translates $T'(D) + DD^T(z_1,\dots,z_r)^T$ for all $(z_1,\dots,z_r) \in \Z^r$ cover all of $\R^r$ and only intersect at their boundaries. 
    \item The set of translates $T''(D) + \widehat D\widehat D^T(\widehat z_1,\dots,\widehat z_{n-r})^T$ for all $(\widehat z_1,\dots,\widehat z_{n-r}) \in \Z^{n-r}$ cover all of $\R^{n-r}$ and only intersect at their boundaries. 
    \item For each $B\in \B(D)$, there are exactly $m(B)^2$ integer points $(z_1,\dots, z_n)$ of $T'(D)$ such that for all sufficiently small $\epsilon>0$, $(z_1,\dots, z_n) + \epsilon(w_1,\dots,w_n)$ is in one of the translates of $P_1(B)$ that make up $T'(D)$. 
    \item For each $B\in \B(D)$, there are exactly $m(B)^2$ integer points $(\widehat z_1,\dots, \widehat z_m)$ of $T''(D)$ such that for all sufficiently small $\epsilon>0$, $(\widehat z_1,\dots, \widehat z_m) + \epsilon(\widehat w_1,\dots,\widehat w_m)$ is in one of the translates of $P_2(B)$ that make up $T''(D)$. 
\end{itemize}
\end{theorem}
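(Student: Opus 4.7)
Plan: The statements for $T'(D)$ and $T''(D)$ are symmetric under swapping $(D, r, M)$ with $(\what D, n-r, -M^T)$ via Lemma~\ref{lem:flowofcut}, so I focus on $T'(D)$. The unifying observation is that each piece $P_1(B) + M\what z$ of $T'(D)$ (where $\what z \in \Z^{n-r}$ is $\what w$-associated with $P_2(B)$) is obtained by projecting the slice $P_1(B) \times \{\what z\} \subset P(B) \subset T(D)$ along the map $(v,\what v) \mapsto v + M\what v$ furnished by Lemma~\ref{lem:lowerdim}. This lifting procedure lets me import Propositions~\ref{prop:nointersect} and~\ref{prop:lastDomino} together with the tiling argument of Corollary~\ref{cor:tiling} from $\R^n$ down to $\R^r$.

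For the non-intersection bullet concerning $T'(D)$, suppose interior points of two distinct pieces $P_1(B_1)+M\what z_1$ and $P_1(B_2)+M\what z_2$ coincide at a point $p = v_1 + M\what z_1 = v_2 + M\what z_2$, where $v_i$ is interior to $P_1(B_i)$. For small $\epsilon > 0$, I form the lifts $(v_i + \epsilon w,\, \what z_i + \epsilon \what w) \in \R^n$. The $\what w$-association of $\what z_i$ with $P_2(B_i)$, together with the facet-avoidance condition on $(w, \what w)$, guarantees each lift is interior to $P(B_i)$ and hence to $T(D)$. A direct block computation shows the two lifts differ by $\D^T(0, \what z_1-\what z_2)^T \in \Ima_\Z(\D^T)$, so they are equivalent in $\tilde\Sand(D)$ by Lemma~\ref{lem:whenequiv}. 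If the lifts are distinct this contradicts Proposition~\ref{prop:lastDomino}; if they coincide, then $\what z_1 = \what z_2$ and $v_1 = v_2$, whence the single perturbed point lies simultaneously in the interiors of $P(B_1)$ and $P(B_2)$, contradicting Proposition~\ref{prop:nointersect}. The same argument handles the analogous bullet for $T''(D)$.

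For the tiling bullets, non-intersection gives
\[\mathrm{vol}(T'(D)) = \sum_B m(B) \cdot \mathrm{vol}(P_1(B)) = \sum_B m(B)^2 = \det(DD^T)\]
by Lemma~\ref{lem:pipedvol} and the Cauchy--Binet formula; this matches the covolume of $\Ima_\Z(DD^T)$. Non-overlap of distinct translates $T'(D) + DD^T z_1$ and $T'(D) + DD^T z_2$ follows from the same lifting recipe: the relevant lifts in $\R^n$ differ by $\D^T(z_1-z_2,\, \what z_1-\what z_2 - M^T(z_1-z_2))^T \in \Ima_\Z(\D^T)$, and must be distinct since $p_1 \ne p_2$ precludes their coincidence. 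Coverage of $\R^r$ then mirrors the closing argument of Corollary~\ref{cor:tiling}: piecewise translation by elements of $\Ima_\Z(DD^T)$ sends $T'(D)$ into $\Pi_\circ(DD^T)$; the image has measure equal to $\mathrm{vol}(T'(D)) = \mathrm{vol}(\Pi_\circ(DD^T))$; and closedness of $T'(D)$ forbids any uncovered open region. Finally, the counting bullets follow from the bijection (via Lemma~\ref{lem:lowerdim}) $(z_1,\dots,z_n) \mapsto (z_1,\dots,z_r) + M(z_{r+1},\dots,z_n)$ between the $m(B)^2$ integer $\w$-representatives of $\Sand(D)$ $\w$-associated with $B$ (counted by Theorem~\ref{thm:Hellyeah}) and the integer points $p \in T'(D)$ satisfying $p + \epsilon (w_1,\dots,w_r) \in P_1(B)+M\what z$ for some $\what w$-associated $\what z$ and small $\epsilon > 0$.

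The main obstacle will be the careful boundary bookkeeping needed to invoke Proposition~\ref{prop:lastDomino}: both perturbed lifts must land in the topological interior of $T(D)$, not merely in $T(D)$. The critical technical input is that $\what z_i$ being $\what w$-associated with $P_2(B_i)$, combined with the avoidance of facets, forces $\what z_i + \epsilon \what w$ into the interior of $P_2(B_i)$ for all sufficiently small $\epsilon > 0$; and $v_i + \epsilon w$ is interior to $P_1(B_i)$ because $v_i$ was assumed to lie in its interior. When $p_i$ is interior to $T'(D)$ but sits on the boundary of its individual piece, the ambiguity must be resolved by choosing the lift whose piece contains $p_i + \epsilon w$ for small $\epsilon > 0$ -- the same chamber-selection phenomenon that underlies the definition of $\w$-associated representatives.
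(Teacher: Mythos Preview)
Your proposal is correct and follows the same strategy as the paper's (terse) proof: lift via Lemma~\ref{lem:lowerdim} and import the results about $T(D)$ from Sections~\ref{sec:tiling} and~\ref{sec:multi}; indeed, your use of Proposition~\ref{prop:lastDomino} for the first two bullets is more precise than the paper's citation of Proposition~\ref{prop:nointersect}, since different pieces of $T'(D)$ can come from the same basis. One small slip: in the tiling bullet, the phrase ``$p_1 \ne p_2$ precludes their coincidence'' is garbled---you are considering a single point $p$ lying in two translates, so what you actually need is that the two lifts in $\R^n$ are distinct, which follows from $z_1 \ne z_2$ together with the invertibility of $DD^T$.
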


\begin{proof}
The general strategy for every part of this proof is to apply Lemma~\ref{lem:lowerdim} to results from Section~\ref{sec:multi} about $T(D)$. 

The first 2 parts follow from Proposition~\ref{prop:nointersect} and Lemma~\ref{lem:lowerdim}. 

For the next 2 parts, Proposition~\ref{prop:otherbases} implies that two $\R^{n}$ vectors that end with $(n-r)$ zeros are equivalent if and only if their difference when restricted to the first $r$ entries is in $\Ima_\Z(D D^T)$. Similarly, two $\R^{n}$ vectors that begin with $r$ zeros are equivalent if and only if their difference when restricted to the last $(n-r)$ entries is in $\Ima_\Z(\widehat D \widehat D^T)$. The results follow from this observation as well as Corollary~\ref{cor:tiling} and Lemma~\ref{lem:lowerdim}. 

Finally, for the last 2 parts, the integer points we obtain are exactly the $w$-representatives of $T(D)$ translated by Lemma~\ref{lem:lowerdim} so that either the first $r$ or last $(n-r)$ coordinates are 0. Thus, we can just apply Theorem~\ref{thm:Hellyeah}.
\end{proof}

\begin{exam} \label{ex:fave3}
Consider the matrix 

\[D = \begin{pmatrix}
1 & 0 & 3 \\
0 & 1 & 2\\
\end{pmatrix}
\text{ which is associated to the matrix }
\D = \begin{pmatrix}
1 & 0 & 3 \\
0 & 1 & 2\\
-3&-2 & 1\\
\end{pmatrix}.\]

In Example~\ref{ex:favorite}, we gave a perspective drawing for the 3-dimensional $T(D)$. In Example~\ref{ex:fave2}, we gave the set of $\w$-representatives when $\w = (1,1,1)$. Here, we will show how to construct $T'(D)$ and $T''(D)$ and find a set of $\w$-representatives for these lower-dimensional tiles. 

To construct $T'(D)$, we first look at $P_2(B)$ for each $B \in \B(D)$. Because $n-r=1$, these are intervals.

\begin{align*}
P_2(\{1,2\}) & = [0,1].\\ 
P_2(\{1,3\}) & = [-2,0].\\ 
P_2(\{2,3\}) & = [-3,0].\\ 
\end{align*}

Then, for each $B \in \B(D)$, we find the set of integer points that are mapped into $P_2(B)$ by the shifting vector $(1)$ (the last $(n-r)$ entries of $\w$). For $P_2(\{1,2\})$, this is $\{(0)\}$.  For $P_2(\{1,3\})$, this is $\{(-2),(-1)\}$.  For $P_2(\{2,3\})$, this is $\{(-3),(-2),(-1)\}$. Then, we multiply each of these by $(3,2)^T$ and shift $P_1(B)$ by these amounts. The resulting tile is given in Figure~\ref{fig:T'D}.

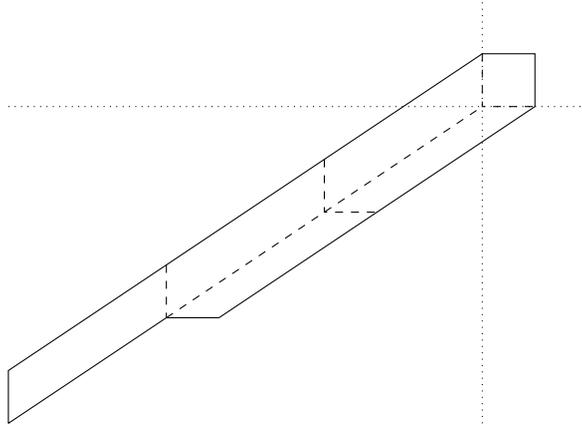
\begin{figure}
\begin{center}
\begin{tikzpicture}[scale = 0.7]
    
    \tikzstyle{every node} = [circle,fill,inner sep=1pt,minimum size = 1.5mm]
    \draw (0,0) -- (0,1) -- (9,7) -- (10,7) --(10,6) --(4,2) -- (3,2) --(0,0);
    
    \draw[dashed] (3,3) -- (3,2);
    \draw[dashed] (6,5) -- (6,4) -- (7,4);
    \draw[dashed] (9,7) -- (9,6) -- (10,6);
    \draw[dashed] (3,2) -- (9,6);
    \draw[dotted] (9,0) -- (9,8);
    \draw[dotted] (0,6) -- (11,6);

\end{tikzpicture}
\caption{This is $T'(D)$ for ${\widehat w} = (1)$. It is made up of 1 parallelogram of area 1 corresponding to $\{1,2\}$, 2 parallelograms of area 2 corresponding to $\{1,3\}$, and 3 parallelograms of area 3 corresponding to 
$\{2,3\}$. The dotted lines are the coordinate axes.}
\label{fig:T'D}
\end{center}
\end{figure}

Finally, to find a set of representatives for $\Sand(D)$, we take all of points $(z_1,z_2)\in \Z^2$ such that for all sufficiently small $\epsilon>0$, $(z_1,z_2) + \epsilon(1,1) \in T'(D)$ (where the shifting vector $(1,1)$ is from the first two elements of $\w$).
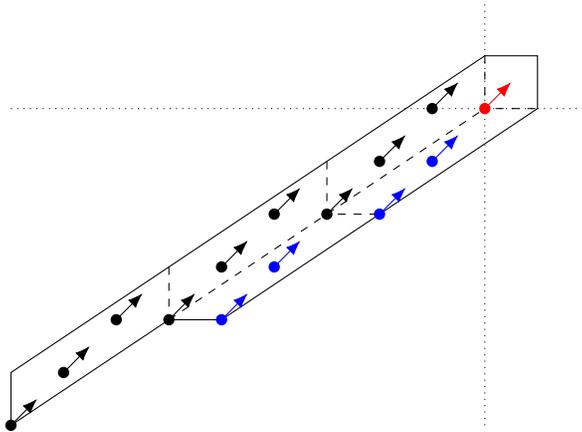
\begin{figure}
\begin{center}
\begin{tikzpicture}[scale = 0.7]
    
    \tikzstyle{every node} = [circle,fill,inner sep=1pt,minimum size = 1.5mm]
    \draw (0,0) -- (0,1) -- (9,7) -- (10,7) --(10,6) --(4,2) -- (3,2) --(0,0);
    
    \draw[dashed] (3,3) -- (3,2);
    \draw[dashed] (6,5) -- (6,4) -- (7,4);
    \draw[dashed] (9,7) -- (9,6) -- (10,6);
    \draw[dashed] (3,2) -- (9,6);
    \draw[dotted] (9,0) -- (9,8);
    \draw[dotted] (0,6) -- (11,6);
    
    \node (o) at (0,0){};
    \draw[-{Latex[length=2mm,width=1.5mm]}] (0,0)--(.5,.5);
    \node (o) at (1,1){};
    \draw[-{Latex[length=2mm,width=1.5mm]}] (1,1)--(1.5,1.5);
    \node (o) at (2,2){};
    \draw[-{Latex[length=2mm,width=1.5mm]}] (2,2)--(2.5,2.5);
    
    \node (o) at (3,2){};
    \draw[-{Latex[length=2mm,width=1.5mm]}] (3,2)--(3.5,2.5);
    \node (o) at (4,3){};
    \draw[-{Latex[length=2mm,width=1.5mm]}] (4,3)--(4.5,3.5);
    \node (o) at (5,4){};
    \draw[-{Latex[length=2mm,width=1.5mm]}] (5,4)--(5.5,4.5);
    
    \node (o) at (6,4){};
    \draw[-{Latex[length=2mm,width=1.5mm]}] (6,4)--(6.5,4.5);
    \node (o) at (7,5){};
    \draw[-{Latex[length=2mm,width=1.5mm]}] (7,5)--(7.5,5.5);
    \node (o) at (8,6){};
    \draw[-{Latex[length=2mm,width=1.5mm]}] (8,6)--(8.5,6.5);
    
    \node[color = blue] (o) at (4,2){};
    \draw[color = blue,-{Latex[length=2mm,width=1.5mm]}] (4,2)--(4.5,2.5);
    \node[color = blue] (o) at (5,3){};
    \draw[color = blue,-{Latex[length=2mm,width=1.5mm]}] (5,3)--(5.5,3.5);
    
    \node[color = blue] (o) at (7,4){};
    \draw[color = blue,-{Latex[length=2mm,width=1.5mm]}] (7,4)--(7.5,4.5);
    \node[color = blue] (o) at (8,5){};
    \draw[color = blue,-{Latex[length=2mm,width=1.5mm]}] (8,5)--(8.5,5.5);
    
    \node[color = red] (o) at (9,6){};
    \draw[color = red,-{Latex[length=2mm,width=1.5mm]}] (9,6)--(9.5,6.5);
\end{tikzpicture}
\caption{We show which integer points map into $T'(D)$ by the shifting vector $(1,1)$. The color of the point corresponds to which basis the point is mapped to. As expected from Theorem~\ref{thm:Hellyeah}, there is 1 point mapped to $\{1,2\}$, 4 points mapped to $\{1,3\}$, and 9 points mapped to $\{2,3\}$. If we append 0 to each of these points, we get a set of representatives for $\Sand(D)$.}
\label{fig:2dbij}
\end{center}
\end{figure}

Let $f_{\w}'$ be the map that sends $\Sand(D)$ to $\B(D)$ by mapping the lattice points in Figure~\ref{fig:2dbij} to bases associated to the parallelograms they are shifted into. We get the following set of representatives for $\Sand(D)$: 

\begin{align*}
f_{\w}'^{-1}(\{1,2\}) = \{&(0,0,0)\}.\\
f_{\w}'^{-1}(\{1,3\}) = \{&(-2,-2,0),(-1,-1,0),(-5,-4,0),(-4,-3,0)\}.\\
f_{\w}'^{-1}(\{2,3\}) = \{&(-3,-2,0),(-2,-1,0),(-1,0,0),(-6,-4,0),(-5,-3,0),\\
&(-4,-2,0),(-9,-6,0),(-8,-5,0),(-7,-4,0)\}.\\
\end{align*}

Note that these are the same representatives that we get if we apply the first part of Lemma~\ref{lem:lowerdim} to the representatives we obtained in Example~\ref{ex:fave2} with the same shifting vector. 

We can also find a set of representatives by using the tiling $T''(D)$ of $\R$. For each $B \in \B(D)$, we find the set of lattice points that are mapped into $P_1(B)$ by the shifting vector $(1,1)$. 

\begin{align*}
\text{For }P_1(\{1,2\}) &\text{ this is } \{(0,0)\}.\\ 
\text{For }P_1(\{1,3\}) &\text{ these are } \{(1,0),(2,1)\}.\\ 
\text{For }P_1(\{2,3\}) &\text{ these are } \{(0,0),(1,1),(2,2)\}.\\ 
\end{align*}

Then, we multiply each of these points by $(-3,-2)$ and shift $P_2(B)$ by these amounts. This gives the following collection of intervals that form $T''(D)$ (where the different intervals are separated by dashed lines):

\begin{center}
\begin{figure}[ h!]
\begin{tikzpicture}
    
    \tikzstyle{every node} = [circle,fill,inner sep=1pt,minimum size = 1.5mm]
    \draw (-13,0)--(1,0);
    
    \draw (-13,-.5) -- (-13,.5);
    \draw[dashed] (-10,-.5) -- (-10,.5);
    \draw[dashed] (-8,-.5) -- (-8,.5);   
    \draw[dashed] (-5,-.5) -- (-5,.5);  
    \draw[dashed] (-3,-.5) -- (-3,.5);
    \draw[dashed] (0,-.5) -- (0,.5);  
    \draw (1,-.5) -- (1,.5); 
    \tikzstyle{every node} = [fill = none,inner sep=1pt,minimum size = 1.5mm]
    \node (o) at (-13,-.8) {-13};
    \node (o) at (-10,-.8) {-10};
    \node (o) at (-8,-.8) {-8};
    \node (o) at (-5,-.8) {-5};
    \node (o) at (-3,-.8) {-3};
    \node (o) at (0,-.8) {0};
    \node (o) at (1,-.8) {1};

\end{tikzpicture}
\caption{This is $T''(D)$ for $w = (1,1)$. It is made up of 1 interval of length 1 corresponding to $\{1,2\}$, 2 intervals of length 2 corresponding to $\{1,3\}$, and 3 intervals of length 3 corresponding to 
$\{2,3\}$.} 
\label{fig:T''D}
\end{figure}
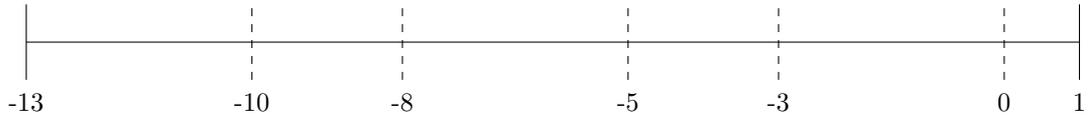
\end{center}

Finally, to find a set of representatives for $\Sand(D)$, we take all points $z$ such that for all sufficiently small $\epsilon>0$, $z + \epsilon(1) \in T''(D)$.
\begin{center}
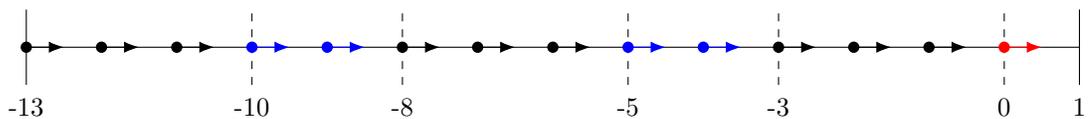
\begin{figure}[ h!]
\begin{tikzpicture}
    
    \tikzstyle{every node} = [fill = none,inner sep=1pt,minimum size = 1.5mm]
    \draw (-13,0)--(1,0);
    \draw (-13,-.5) -- (-13,.5);
    \draw[dashed] (-10,-.5) -- (-10,.5);
    \draw[dashed] (-8,-.5) -- (-8,.5);   
    \draw[dashed] (-5,-.5) -- (-5,.5);  
    \draw[dashed] (-3,-.5) -- (-3,.5);
    \draw[dashed] (0,-.5) -- (0,.5);  
    \draw (1,-.5) -- (1,.5); 
    \node (o) at (-13,-.8) {-13};
    \node (o) at (-10,-.8) {-10};
    \node (o) at (-8,-.8) {-8};
    \node (o) at (-5,-.8) {-5};
    \node (o) at (-3,-.8) {-3};
    \node (o) at (0,-.8) {0};
    \node (o) at (1,-.8) {1};
\tikzstyle{every node} = [circle,fill,inner sep=1pt,minimum size = 1.5mm]
\node (o) at (-13,0){};
    \draw[-{Latex[length=2mm,width=1.5mm]}] (-13,0)--(-12.5,0);
    \node (o) at (-12,0){};
    \draw[-{Latex[length=2mm,width=1.5mm]}] (-12,0)--(-11.5,0);
    \node (o) at (-11,0){};
    \draw[-{Latex[length=2mm,width=1.5mm]}] (-11,0)--(-10.5,0);
    
    \node (o) at (-8,0){};
    \draw[-{Latex[length=2mm,width=1.5mm]}] (-8,0)--(-7.5,0);
    \node (o) at (-7,0){};
    \draw[-{Latex[length=2mm,width=1.5mm]}] (-7,0)--(-6.5,0);
    \node (o) at (-6,0){};
    \draw[-{Latex[length=2mm,width=1.5mm]}] (-6,0)--(-5.5,0);
    
    \node (o) at (-3,0){};
    \draw[-{Latex[length=2mm,width=1.5mm]}] (-3,0)--(-2.5,0);
    \node (o) at (-2,0){};
    \draw[-{Latex[length=2mm,width=1.5mm]}] (-2,0)--(-1.5,0);
    \node (o) at (-1,0){};
    \draw[-{Latex[length=2mm,width=1.5mm]}] (-1,0)--(-.5,0);
    
    \node[color = blue] (o) at (-10,0){};
    \draw[color = blue,-{Latex[length=2mm,width=1.5mm]}] (-10,0)--(-9.5,0);
    \node[color = blue] (o) at (-9,0){};
    \draw[color = blue,-{Latex[length=2mm,width=1.5mm]}] (-9,0)--(-8.5,0);
    
    \node[color = blue] (o) at (-5,0){};
    \draw[color = blue,-{Latex[length=2mm,width=1.5mm]}] (-5,0)--(-4.5,0);
    \node[color = blue] (o) at (-4,0){};
    \draw[color = blue,-{Latex[length=2mm,width=1.5mm]}] (-4,0)--(-3.5,0);
    
    \node[color = red] (o) at (0,0){};
    \draw[color = red,-{Latex[length=2mm,width=1.5mm]}] (0,0)--(.5,0);

\end{tikzpicture}
\caption{We show which integer points map into $T''(D)$ by the shifting vector $(1)$. The color of the point corresponds to which basis the point is mapped to. As expected from Theorem~\ref{thm:Hellyeah}, there is 1 point mapped to $\{1,2\}$, there are 4 points mapped to $\{1,3\}$, and there are 9 points mapped to $\{2,3\}$. If we prepend $(0,0)$ to each of these points, we get a set of representatives for $\Sand(D)$.} 
\label{fig:1dbij}
\end{figure}
\end{center}

Let $f_{(w,\widehat w)}''$ be the map that sends $\Sand(D) \to \B(D)$ by mapping the lattice points in Figure~\ref{fig:1dbij} to bases associated to the intervals they are shifted into. We get the following set of representatives for $\Sand(D)$:

\begin{align*}
f_{\w}''^{-1}(\{1,2\}) = \{&(0,0,0)\}.\\
f_{\w}''^{-1}(\{1,3\}) = \{&(0,0,-10),(0,0,-9),(0,0,-5),(0,0,-4)\}.\\
f_{\w}''^{-1}(\{2,3\}) = \{&(0,0,-13),(0,0,-12),(0,0,-11),(0,0,-8),(0,0,-7),\\
&(0,0,-6),(0,0,-3),(0,0,-2),(0,0,-1)\}.\\
\end{align*}
\end{exam}

Note that these are the same representatives that we get as if we apply the second part of Lemma~\ref{lem:lowerdim} to the representatives we obtained in Example~\ref{ex:fave2} with the same shifting vector. 

Figure~\ref{fig:tilings} gives some examples of tiles in $\R^2$ computed using Sage. On the left is the tile with different colors indicating different bases and on the right is 9 copies of the tile to show how the tiling works.  

\begin{rema}
When $m(B) = 1$ for every $B \in \B(D)$, the tile $T'(D)$ consists of a single parallelepiped for each $B \in \B(D)$. It is possible to translate each of these parallelepipeds by vectors that are trivial with respect to $\Sand(D)$ and obtain the \textit{zonotope} formed by the columns of $D$. In~\cite{BBY}, the authors use this zonotope to construct bijections between $\B(D)$ and $\Sand(D)$ (when $m(B) = 1$ for all $B \in \B(D)$).  
\end{rema}

\begin{figure}
 \includegraphics[width=.48\linewidth]{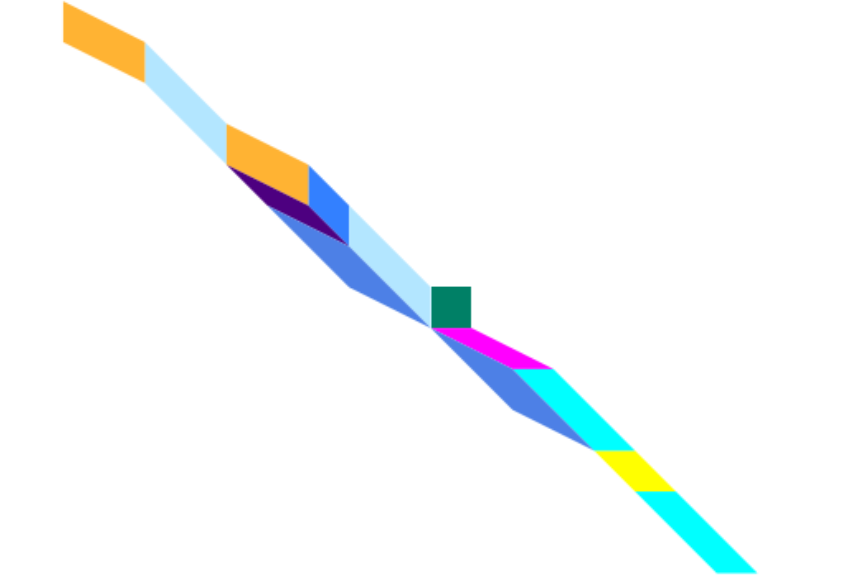}
\includegraphics[width=.48\linewidth]{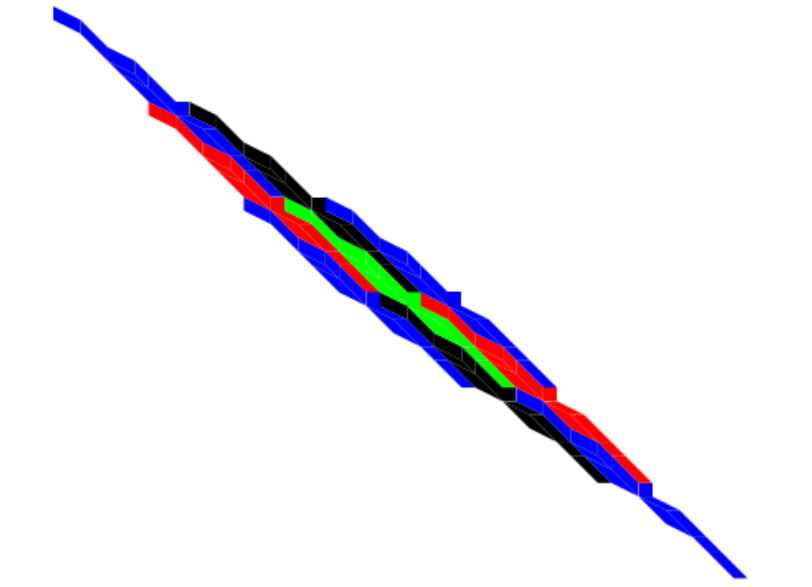}
\centering

\vspace{.5cm}

$D = \begin{pmatrix}
1 & 0 & -1 & -2 & 2 \\
0 & 1 & 1 & 2 & -1\\
\end{pmatrix} \hspace{1cm}\w = (1,1,1,1,1)$

\vspace{.5cm}

\centering
\includegraphics[width=.48\linewidth]{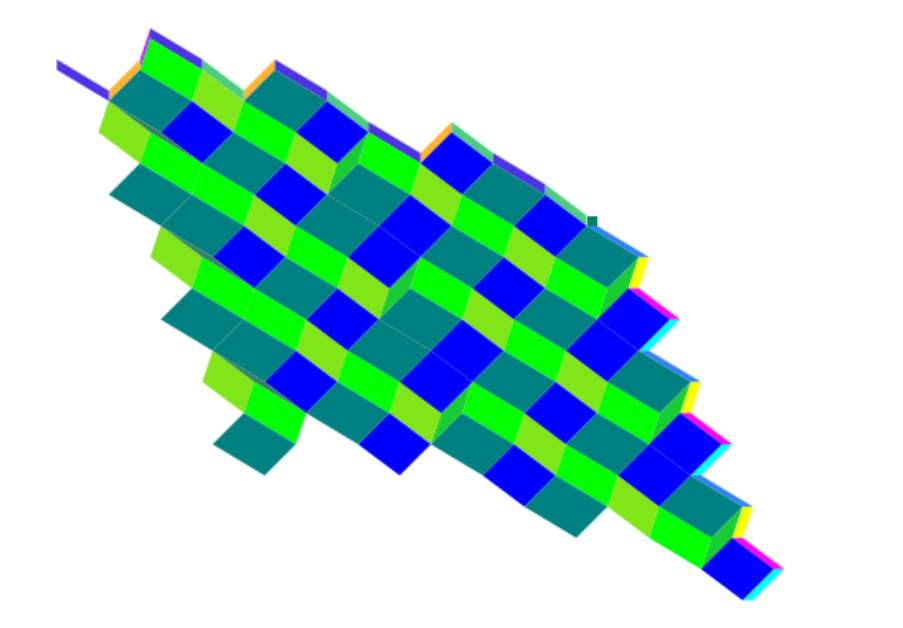}
\includegraphics[width=.48\linewidth]{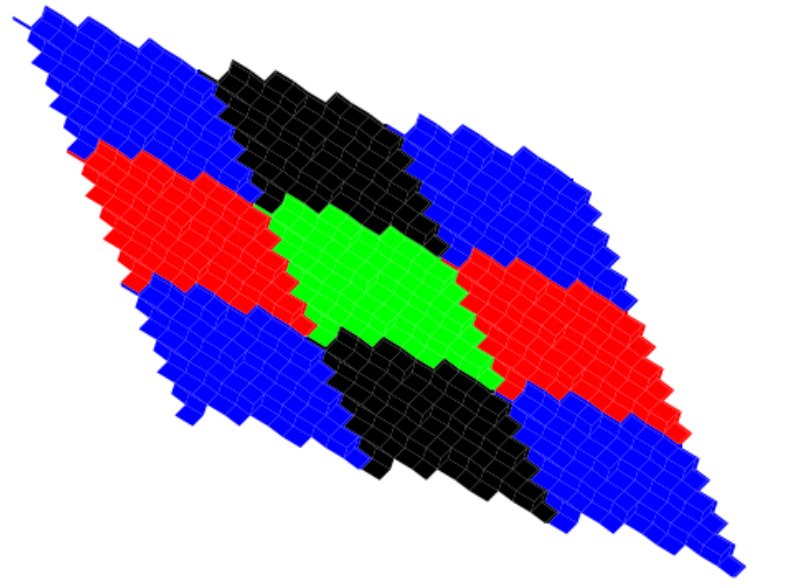}
\linebreak
\centering

\vspace{.5cm}

$D = \begin{pmatrix}
1 & 0 & 1 & 3 & -4 & 5 \\
0 & 1 & 3 & 3 & 3 & -3\\
\end{pmatrix}  \hspace{1cm}\w = (4,1,5,2,3,2)$

\vspace{.5cm}

\centering
\includegraphics[width=.48\linewidth]{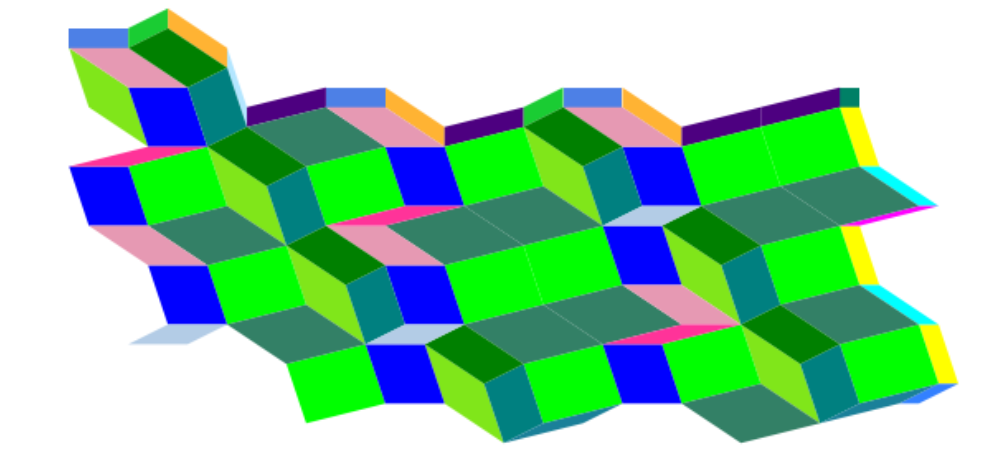}
\includegraphics[width=.48\linewidth]{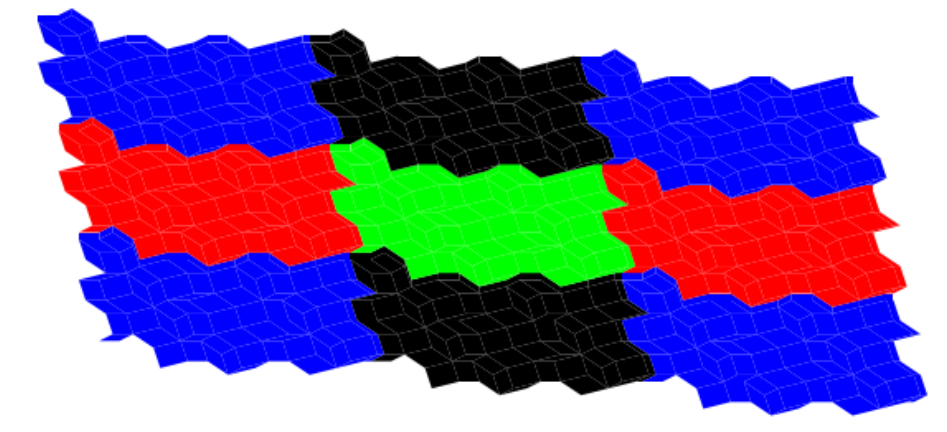}
\linebreak
\centering

\vspace{.5cm}

$D = \begin{pmatrix}
1 & 0 & 1 & 3 & -4 & 3 & 2 \\
0 & 1 & -3 & -2 & -1 & 0 & 1\\
\end{pmatrix}  \hspace{1cm}\w = (1,1,5,4,3,2,2)$

\vspace{.5cm}

\caption{Above are 3 examples of tiles that we obtain by applying Lemma~\ref{lem:lowerdim} to the higher-dimensional tiling from Section~\ref{sec:tiling}. }
\label{fig:tilings}
\end{figure}

\section{Shifting vectors and hyperplane arrangements}\label{sec:shifting}

In this section, we associate classes of shifting vectors producing the same multijection with \textit{chambers} of a \textit{hyperplane arrangement}. We also show that for a shifting vector $\w$, each basis is $\w$-associated with a unique \textit{corner point}. In the next section, we will show that each corner point is equivalent with respect to $\Sand(D)$ to a $\{0,1\}^n$ vector.

Recall that a \textit{standard representative matrix} is a matrix of the form
\[ D = \begin{pmatrix}
I_r &M \\
\end{pmatrix},\]
\noindent for some $r \times (n-r)$ integer matrix $M$. As in previous sections, we let $\widehat D$ be the $(n-r) \times n$ matrix
\[\widehat D = \begin{pmatrix}
-M^T & I_{n-r} \\
\end{pmatrix},\]
\noindent and $\D$ be the $n\times n$ matrix
\[ \D = \begin{pmatrix}
D \\
\widehat D \\
\end{pmatrix} =\begin{pmatrix}
I_r & M \\
-M^T & I_{n-r} \\
\end{pmatrix}.\]

Let $D$ be a rank $r$ standard representative matrix and let $\B(D)$ be its set of bases. For $B \in \B(D)$, we write $\CB$ as shorthand for the columns of $D$ which correspond to the indices of $B$. Similarly, $\B(\widehat D)$ is the set of bases of $\what D$, and for $\what B \in \B(\what D)$, we write $\hCB$ as shorthand for the columns of $\widehat D$ which correspond to the indices of $\what B$. 

\begin{defi}
For a positive integer $k$, a \textit{central hyperplane} in $\R^k$ is a $(k-1)$-dimensional linear subspace of $\R^k$. An \textit{affine hyperplane} is a translated central hyperplane. We use the blanket term \textit{hyperplane} when we allow both central and affine hyperplanes. For a hyperplane $H$ and vector $v \in \R^k$, we define the affine hyperplane
\[H+v = \{p+v\mid p \in H\}.\]
A \textit{hyperplane arrangement} $\mathcal H$ is a collection of hyperplanes in $\R^k$. A \textit{chamber} of a hyperplane arrangement $\mathcal H$ is a connected component of
\[\R^k \setminus \left(\bigcup_{H \in \mathcal H} H\right ) .\]
\end{defi}

Let $S$ be a subset of $[n]$ and $\CS$ be the corresponding columns of $D$. We write $\spn(\CS)$ for the subspace of $\R^r$ generated over $\R$ by the vectors in $\CS$. Let $\rk(S)$ be the dimension of the space $\spn(\CS)$. We will be primarily working with the case where $\rk(S) = r-1$, in which case $\spn(\CS)$ is a central hyperplane in $\R^r$. 

\begin{defi}
$\mathcal H(D)$ is the hyperplane arrangement defined by:
\[ \mathcal H(D) = \bigcup_{\{S \subset [n] ~\mid~ \text{rk}(S) = r-1\}} \spn(x_S).\]
\end{defi}

The arrangement $\mathcal H(\what D)$ is defined analogously (but with $r-1$ replaced by $n-r-1$). For each $B \in \B(D)$, recall the parallelepiped $P_1(B)$ from Definition~\ref{def:p1p2p} (i.e. the fundamental parallelepiped of $D$ restricted to columns in $B$). 

\begin{lemma}\label{lem:p1hyp} $P_1(B)$ is the region bounded by the following set of $2r$ hyperplanes:
\[\{\spn(\CB \setminus x) \mid x \in \CB\} \cup \{\spn(\CB \setminus x) + x  \mid x \in \CB\}.\]
\end{lemma}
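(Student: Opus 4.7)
The plan is to use the definition of $P_1(B)$ as a fundamental parallelepiped and translate it into an explicit description in terms of the coordinates furnished by $\CB$. Since $B \in \B(D)$, the matrix $D$ restricted to the columns in $B$ is an invertible $r \times r$ matrix, so $\CB = \{\colB 1, \dots, \colB r\}$ is a basis of $\R^r$. Consequently every point $p \in \R^r$ has a unique expression $p = \sum_{i=1}^r a_i(p) \, \colB i$, and by Definition~\ref{def:parpip} we have
\[
P_1(B) = \Bigl\{\, p \in \R^r \,\Bigm|\, 0 \le a_i(p) \le 1 \text{ for all } i \in [r] \,\Bigr\}.
\]

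Next I would identify the facets of $P_1(B)$. Since $P_1(B)$ is cut out by the $2r$ linear inequalities $0 \le a_i \le 1$, its boundary decomposes into the $2r$ facets $F_{j,0}$ and $F_{j,1}$, where $F_{j,c} = \{p \in P_1(B) : a_j(p) = c\}$ for $j \in [r]$ and $c \in \{0,1\}$. I would then compute the affine hull of each facet: $F_{j,0}$ consists of vectors of the form $\sum_{i \ne j} a_i \colB i$ with $0 \le a_i \le 1$, whose affine span is exactly $\spn(\CB \setminus \colB j)$; and $F_{j,1}$ consists of vectors of the form $\colB j + \sum_{i \ne j} a_i \colB i$, whose affine span is $\spn(\CB \setminus \colB j) + \colB j$. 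Since the $r-1$ vectors in $\CB \setminus \colB j$ are linearly independent (as a subset of a basis), each such affine span is genuinely a hyperplane in $\R^r$.

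Finally, I would assemble the conclusion. The $2r$ facets of $P_1(B)$ are precisely contained in the $2r$ hyperplanes listed in the statement, and $P_1(B)$ is the intersection of the corresponding closed half-spaces, so it is the region bounded by them. The only subtlety worth checking is that the $2r$ hyperplanes are distinct and each supports exactly one facet; this follows because $\colB j \notin \spn(\CB \setminus \colB j)$, so the translate by $\colB j$ is a parallel but distinct hyperplane. No step presents a genuine obstacle: the whole argument is a direct unpacking of the definition of the fundamental parallelepiped together with the fact that $\CB$ is a basis of $\R^r$.
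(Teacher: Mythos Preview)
Your proof is correct and follows essentially the same approach as the paper: both expand an arbitrary point $p$ in the basis $\CB$ as $p=\sum_i a_i\colB i$, identify the hyperplanes $\spn(\CB\setminus\colB j)$ and $\spn(\CB\setminus\colB j)+\colB j$ with the loci $a_j=0$ and $a_j=1$, and conclude that lying between all $2r$ hyperplanes is equivalent to $0\le a_i\le 1$ for every $i$. The only cosmetic difference is that you phrase the argument in terms of facets and their affine hulls, whereas the paper speaks directly of lying between parallel hyperplanes.
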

\begin{proof}
Let $\CB = \{\colB 1,\dots, \colB r\}$. Since $B$ is a basis, we can write any point $p \in \R^n$ uniquely in the form:
\[p = \sum_{i=1}^r a_i \colB i.\]

For each $\colB i\in \CB$, $\spn(\CB \setminus \colB i)$ and $\spn(\CB \setminus \colB i)+\colB i$ are parallel hyperplanes. Furthermore, for $\colB j \in \CB$ with $j \not= i$, the vector $\colB j$ is parallel to both hyperplanes. This means that we can determine whether or not $p$ is between $\spn(\CB \setminus \colB i)$ and $\spn(\CB \setminus \colB i)+\colB i$ while only considering $a_i$. If $a_i = 0$, $p$ lies on the first hyperplane, while if $a_i = 1$, $p$ lies on the second hyperplane. It follows that $p$ lies between the two hyperplanes precisely when $0 \le a_i \le 1$. Since this is true for every $i$, we conclude that $p$ lies in the region bounded by the hyperplanes precisely when $0 \le a_i \le 1$ for all $i \in [r]$. This is the same condition that determines whether or not $p \in P_1(B)$. 
\end{proof}

\begin{defi}
Fix some $B \in \B(D)$. Let $\phi_B$ be the map from $P_1(B) \times \CB$ to $\{0,1,2\}$ defined in the following way:
\[\phi_B(p,x) = \begin{cases}
    1 & \text{ if } p \in \spn(\CB\setminus x),\\
    2 &\text{ if } p \in (\spn(\CB\setminus x)+x),\\
    0 &\text{ if } p \notin \spn(\CB\setminus x) \cup (\spn(\CB\setminus x)+x).
\end{cases}\]
\end{defi}

This map is well-defined since a point cannot lie in two parallel hyperplanes. 

\begin{defi}\label{def:corner}
A \textit{corner point} of $P_1(B)$ is a $p \in P_1(B)$ such that for every $\colB i \in \CB$, we have $\phi_B(p,\colB i) \not= 0$.
\end{defi}

\begin{lemma}
For every $B \in \B(D)$, there are exactly $2^r$ corner points of $P_1(B)$ (one for each element of $\{1,2\}^r$) and they are all in $\Z^r$. 
\end{lemma}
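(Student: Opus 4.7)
The plan is to decode the corner-point condition into a statement about the unique coordinate expansion of $p$ in the basis $x_B$, and then read off the count and integrality directly.

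Since $B$ is a basis, the columns $x_B = \{\colB 1,\dots,\colB r\}$ are linearly independent, so every $p \in P_1(B)$ admits a unique expansion $p = \sum_{i=1}^r a_i \colB i$ with $0 \le a_i \le 1$ (this is exactly what is shown in the proof of Lemma~\ref{lem:p1hyp}). First I would recall from that proof the explicit identification of the two parallel bounding hyperplanes: $p \in \spn(x_B \setminus \colB i)$ iff $a_i = 0$, and $p \in \spn(x_B \setminus \colB i) + \colB i$ iff $a_i = 1$, while $p$ lies strictly between them iff $0 < a_i < 1$. Translating this into the definition of $\phi_B$ gives
\[
\phi_B(p,\colB i) = \begin{cases} 1 & \text{if } a_i = 0,\\ 2 & \text{if } a_i = 1,\\ 0 & \text{if } 0 < a_i < 1.\end{cases}
\]

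Next, I would apply Definition~\ref{def:corner}: $p$ is a corner point of $P_1(B)$ iff $\phi_B(p,\colB i) \neq 0$ for every $i$, which by the display above is equivalent to $a_i \in \{0,1\}$ for every $i$. The map $p \mapsto (\phi_B(p,\colB 1),\dots,\phi_B(p,\colB r)) \in \{1,2\}^r$ is therefore a bijection between corner points and elements of $\{1,2\}^r$, yielding exactly $2^r$ corner points as claimed.

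Finally, for integrality I would observe that each corner point has the form $\sum_{i=1}^r a_i \colB i$ with $a_i \in \{0,1\}$, i.e.\ it is a sum of a subset of the columns of $D$. Since $D$ is an integer matrix its columns lie in $\Z^r$, so every corner point lies in $\Z^r$. There is no real obstacle here beyond carefully invoking the uniqueness of the expansion from Lemma~\ref{lem:p1hyp} in the first step; once that identification is in place, the rest is immediate bookkeeping.
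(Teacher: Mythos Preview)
Your proposal is correct and follows essentially the same approach as the paper: both identify corner points with the $2^r$ choices of $a_i\in\{0,1\}$ via the coordinate expansion in the basis $\CB$, and deduce integrality from the fact that the $\colB i$ are integer vectors. The paper is simply terser, writing down the explicit point $p=\sum_{\{i:\xi_i=2\}}\colB i$ for each $\xi\in\{1,2\}^r$ without spelling out the $\phi_B$ translation you make explicit.
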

\begin{proof}
For each $\xi \in \{1,2\}^r$ there is exactly one point $p$ such that for every $i \in [r]$, $\phi_B(p,\colB i)$ is the $i^{th}$ entry of $\xi$. This point is explicitly given by
\[p = \sum_{\{i~\mid ~\text{ the $i^{th}$ entry of $\xi$ is $2$}\}} \colB i.\]
Since each $\colB i$ is in $\Z^r$, the point $p$ is also in $\Z^r$. 
\end{proof}

We recover analogous results and  definitions as above when we replace $D$ with $\widehat D$, $\B(D)$ with $\B(\widehat D)$, $r$ with $n-r$, and $P_1(B)$ with $P_2(B)$. In particular, we get a hyperplane arrangement $\mathcal H(\widehat D)$ whose hyperplanes are spanned by sets of $n-r-1$ columns of $\widehat D$. Corner points of $P_2(B)$ are defined analogously to corner points of $P_1(B)$.

\begin{defi}
A \textit{corner point} of $P(B)$ is a $\Z^n$ vector whose first $r$ entries form a corner point of $P_1(B)$ and whose last $n-r$ entries form a corner point of $P_2(B)$. 
\end{defi}

Consider the vectors $w = (w_1,\dots,w_r) \in \R^r$ and $\widehat w = (\widehat w_1,\dots, \widehat w_{n-r})\in \R^{n-r}$. We write $(w,\widehat w)$ for their concatenation, which is an $\R^n$ vector. 

Recall from Definition~\ref{def:shifting} that $(w,\widehat w)$ is a \textit{shifting vector} if and only if for all $B \in \B(D)$, $(w,\widehat w)$ is not in the span of any facet of $P(B)$. By Lemma~\ref{lem:splitshift}, this is equivalent to the condition that for all $B \in \B(D)$, $w$ is not in the span of any facet of $P_1(B)$ and $\widehat w$ is not in the span of any facet of $P_2([n] \setminus B)$.

\begin{lemma}\label{lem:shiftnothyp}
$(w,\widehat w)$ is a shifting vector if and only if $w$ does not lie on any $H \in \mathcal H(D)$ and $\widehat w$ does not lie on any $\widehat H \in \mathcal H(\widehat D)$. 
\end{lemma}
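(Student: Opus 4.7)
The plan is to split the biconditional using Lemma~\ref{lem:splitshift}: $(w,\widehat w)$ is a shifting vector if and only if $w$ is not in the span of any facet of $P_1(B)$ and $\widehat w$ is not in the span of any facet of $P_2(B)$, for every $B\in\B(D)$. These two parts are structurally symmetric, since the complements $\{[n]\setminus B : B\in\B(D)\}$ are exactly the bases of $\widehat D$ (a standard matroid-duality fact that one can read off directly from the block structure of $\D$), and under this identification the parallelepiped $P_2(B)$ plays the role for $\widehat D$ that $P_1$ plays for $D$. So I will prove the $w$-part in detail and obtain the $\widehat w$-part by running the same argument with $D$ replaced by $\widehat D$.

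For the $w$-part, Lemma~\ref{lem:p1hyp} identifies the facets of $P_1(B)$ as lying in the $2r$ affine hyperplanes $\spn(\CB\setminus x)$ and $\spn(\CB\setminus x)+x$ for $x\in\CB$. These come in parallel pairs with common linear direction equal to the central hyperplane $\spn(\CB\setminus x)$, so for the direction vector $w$, being parallel to a facet is the same as lying on this central hyperplane. Thus the $w$-condition reduces to
\[w \notin \spn(\CB\setminus x) \quad\text{for every}\quad B\in\B(D),\ x\in\CB,\]
and it remains to identify the resulting collection of central hyperplanes with $\mathcal H(D)$, i.e.\ to prove
\[\{\spn(\CB\setminus x) : B\in\B(D),\ x\in\CB\} \;=\; \mathcal H(D).\]

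The forward inclusion is immediate: since $B$ is a basis of $D$, the $r-1$ vectors in $\CB\setminus x$ are linearly independent, so $\spn(\CB\setminus x)$ has rank $r-1$ and belongs to $\mathcal H(D)$. The reverse inclusion is the only genuine content. Given $S\subseteq[n]$ with $\rk(S)=r-1$, I will first extract a linearly independent subset $S'\subseteq S$ of size $r-1$ with $\spn(x_{S'})=\spn(\CS)$, then use the rank-$r$ hypothesis on $D$ to pick a column index $j\in[n]$ with $x_j\notin\spn(x_{S'})$. Setting $B=S'\cup\{j\}$ then yields a basis of $D$ with $\spn(\CB\setminus x_j)=\spn(x_{S'})=\spn(\CS)$, as required. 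None of this is hard; the only place for care is the matroid-duality reduction for the $\widehat w$-half, but that can be kept clean by exploiting the built-in symmetry between $D$ and $\widehat D$ in the full matrix $\D$.
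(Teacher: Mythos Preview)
Your proof is correct and follows essentially the same route as the paper's: reduce via Lemma~\ref{lem:splitshift} and Lemma~\ref{lem:p1hyp} to the set equality $\{\spn(\CB\setminus x):B\in\B(D),\,x\in\CB\}=\mathcal H(D)$, check both inclusions (independence of $\CB\setminus x$ for one direction, extension of a rank-$(r-1)$ set to a basis for the other), and handle the $\widehat w$-part by the $D\leftrightarrow\widehat D$ symmetry. The only difference is that you spell out the basis-extension step more explicitly than the paper does.
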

\begin{proof}
By Lemma~\ref{lem:p1hyp}, each facet of $P_1(B)$ is contained in the hyperplane $\spn(\CB \setminus x)$ for some $x \in \CB$ (or its translation). Furthermore, every hyperplane of this form is the span of a facet of $P_1(B)$. It follows that $w$ satisfies the conditions for a shifting vector if and only if $w$ does not lie in any of the hyperplanes:
\[\bigcup_{B \in \B(D)}\left (\bigcup_{x \in \CB} \spn(\CB \setminus x)\right).\]

We claim that these are exactly the hyperplanes that make up $\mathcal H(D)$. This is true because $\CB \setminus x$ is always a set of $r-1$ linearly independent columns of $D$ and every set of $r-1$ linearly independent columns of $D$ can be extended to form a basis. It is analogous to show that the spans of the facets of $P_2([n] \setminus \widehat B)$ over all $\widehat B \in \B(\widehat D)$ correspond to the hyperplanes in $\mathcal H(\widehat D)$. The lemma follows. 
\end{proof}

From Lemma~\ref{lem:shiftnothyp}, we see that if $(w,\widehat w)$ is a shifting vector, $w$ must lie in a chamber of $\mathcal H(D)$ and $\widehat w$ must lie in a chamber of $\mathcal H(\widehat D)$. Let $B \in \B(D)$, $z = (z_1,\dots,z^r) \in \Z^r$, and $\widehat z = (\widehat z_1,\dots, \widehat z_{n-r})\in \Z^{n-r}$. Recall from Definition~\ref{def:reps} that for any $v \in \R^r$ (resp. $\widehat v \in \R^{n-r}$), $v$ (resp. $\widehat v$) is \textit{$w$-associated} with $B$ if $v+ \epsilon w \in P_1(B)$ (resp. $\widehat v + \epsilon \widehat w \in P_2(B)$) for all sufficiently small $\epsilon>0$.

\begin{prop}\label{prop:onecorner}
For any shifting vector $(w,\widehat w)$ and any choice of $B \in \B(D)$, there is a unique corner point of $P_1(B)$ that is $w$-associated with $B$, a unique corner point of $P_2(B)$ that is $\widehat w$-associated with $B$, and a unique corner point of $P(B)$ that is $\w$-associated with $B$. 
\end{prop}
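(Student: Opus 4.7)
The plan is to prove the statement for $P_1(B)$ directly, obtain the statement for $P_2(B)$ by applying the same argument with $D$ replaced by $\what D$, and then deduce the statement for $P(B)$ from the previous two using Lemma~\ref{lem:splitassoc} together with the definition of a corner point of $P(B)$.

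For the $P_1(B)$ case, I would write $\CB = \{\colB 1, \ldots, \colB r\}$ and use that these columns are linearly independent (since $B$ is a basis) to expand $w$ uniquely as $w = \sum_{i=1}^r b_i \colB i$. By Lemma~\ref{lem:shiftnothyp}, $w$ lies in no hyperplane of $\mathcal H(D)$, and in particular not in $\spn(\CB \setminus \colB i)$ for any $i$, which forces $b_i \neq 0$ for every $i \in [r]$. Every corner point of $P_1(B)$ has the form $p_S = \sum_{i \in S} \colB i$ for some $S \subseteq [r]$, so $p_S + \epsilon w = \sum_i (a_i(S) + \epsilon b_i) \colB i$ where $a_i(S) = 1$ if $i \in S$ and $a_i(S) = 0$ otherwise. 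Arguing as in the proof of Lemma~\ref{lem:p1hyp}, this lies in $P_1(B)$ for all sufficiently small $\epsilon > 0$ if and only if $0 \le a_i(S) + \epsilon b_i \le 1$ for every $i$. This happens precisely when $b_i < 0$ for each $i \in S$ and $b_i > 0$ for each $i \notin S$. Since all $b_i$ are nonzero, there is a unique such $S$, namely $S = \{i \in [r] : b_i < 0\}$, establishing both existence and uniqueness.

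The $P_2(B)$ statement follows by the exact same argument applied to $\what D$, using that $[n]\setminus B$ indexes a basis of the column space of $\what D$ and that the shifting condition on $\widehat w$ (again via Lemma~\ref{lem:shiftnothyp}) ensures the analogous expansion coefficients are all nonzero. For the $P(B)$ statement, recall that by definition a corner point $q$ of $P(B)$ is an $n$-vector $(v,\widehat v)$ such that $v$ is a corner point of $P_1(B)$ and $\widehat v$ is a corner point of $P_2(B)$. By Lemma~\ref{lem:splitassoc}, $q$ is $\w$-associated with $B$ if and only if $v$ is $w$-associated with $B$ and $\widehat v$ is $\widehat w$-associated with $B$. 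Pairing the unique corner points produced by the first two parts yields the unique corner point of $P(B)$ that is $\w$-associated with $B$.

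I do not anticipate a genuine obstacle here; the content of the proposition is essentially bookkeeping about sign patterns once the shifting condition has been translated into the non-vanishing of the coordinates $b_i$ in the basis $\CB$. The only step worth being careful about is invoking Lemma~\ref{lem:shiftnothyp} (rather than only Lemma~\ref{lem:splitshift}) to justify that $w$ avoids every hyperplane $\spn(\CB \setminus \colB i)$, not just those that happen to be facet spans of a single $P_1(B)$.
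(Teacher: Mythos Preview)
Your proposal is correct and follows essentially the same argument as the paper's proof: expand $w$ in the basis $\CB$, use the shifting condition to get all coefficients $b_i$ nonzero, and read off the unique sign pattern $a_i \in \{0,1\}$ compatible with $0 \le a_i + \epsilon b_i \le 1$ for small $\epsilon>0$; the $P_2(B)$ case is analogous and the $P(B)$ case follows from Lemma~\ref{lem:splitassoc}. One small remark: your final caution is unnecessary, since for the fixed $B$ under consideration the hyperplanes $\spn(\CB\setminus \colB i)$ are exactly the spans of the facets of $P_1(B)$ (Lemma~\ref{lem:p1hyp}), so Lemma~\ref{lem:splitshift} already suffices.
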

\begin{proof}
The proof of this Proposition is similar to the proof of Proposition~\ref{prop:Ehrhart}. 

Let $\{\colB 1,\dots, \colB r\}$ be the columns of $D$ corresponding to $B$. An integer point $z \in P(B)$ can be written as
\[z = \sum_{i=1}^{r} a_i\colB i,\]

\noindent with $0 \le a_i \le 1$ for all $i$. Because the $\colB i$ are linearly independent (otherwise $B$ would not be a basis), there is a unique way to write $w$ in the form:
\[ w = \sum_{i=1}^{r} b_i\colB i,\]

\noindent for $b_i \in \R$. Because $(w,\widehat w)$ is a shifting vector, $b_i \not= 0$ for all $i\in [r]$. For any $\epsilon \in \R$, we have:
\[z+\epsilon w = \sum_i^{n} (a_i + \epsilon b_i)\colB i.\]

From here, we see that $z$ is $w$-associated with $B$ if and only if $0 < a_i \le 1$ for $b_i < 0$ and $0 \le a_1 < 1$ for $b_i > 0$. Furthermore, $z$ can only be a corner point if $a_i \in \{0,1\}$ for all $i$. Thus, the unique corner point $w$-associated with $B$ is given by taking $a_i = 0$ for $b_i>0$ and $a_i = 1$ for $b_i < 0$. 

The proof is analogous for $P_2(B)$ and $\widehat w$. From here, the fact that $P(B)$ has a unique $\w$-associated corner point follows from Lemma~\ref{lem:splitassoc}.
\end{proof}


\begin{defi}\label{def:wequiv}
Two shifting vectors $(w,\widehat w)$ and $(w',\widehat w')$ are \textit{equivalent} if $w$ and $w'$ lie in the same chamber of $\mathcal H(D)$ and $\widehat w$ and $\widehat w'$ lie in the same chamber of $\mathcal H(\widehat D)$. 
\end{defi}

\begin{prop}\label{prop:tfaeshifthyp}
Let $\w = (w,\widehat w)$ and $\w' = (w',\widehat w')$ be shifting vectors. The following are equivalent:
\begin{enumerate}
    \item $\w$ and $\w'$ are equivalent (in the sense of Definition~\ref{def:wequiv}).
    \item For every $B \in \B(D)$, the lattice points $\w$-associated to $B$ and the lattice points $\w'$-associated to $B$ coincide.
    \item The set of $\w$-representatives and the set of $\w'$-representatives coincide.
\end{enumerate}
\end{prop}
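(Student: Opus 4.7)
The plan is to establish the three implications cyclically, proving $(1) \Rightarrow (2) \Rightarrow (3) \Rightarrow (1)$, with the last implication being the main obstacle. The first two implications are direct consequences of the sign-pattern analysis developed in the proof of Proposition~\ref{prop:Ehrhart} together with the disjoint decomposition of representatives furnished by Lemma~\ref{lem:associated}.

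For $(1) \Rightarrow (2)$, I would unpack the explicit description from the proof of Proposition~\ref{prop:Ehrhart}: for a basis $B$ with columns $\{\colB{1},\dots,\colB{r}\}$ and expansion $w = \sum_i b_i \colB{i}$, the lattice points $w$-associated with $B$ in $P_1(B)$ are precisely the integer points of the half-open parallelepiped $\Pi_\circ(D|_B) + \sum_{i: b_i < 0} \colB{i}$. This description depends only on the sign pattern $(\mathrm{sgn}(b_i))_i$, and for each pair $(B,i)$ the sign of $b_i$ records the side of the hyperplane $\spn(\CB \setminus \colB{i}) \in \mathcal H(D)$ on which $w$ lies. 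Consequently, if $w$ and $w'$ lie in the same chamber of $\mathcal H(D)$, the sign patterns agree across every basis, so the lattice-point sets coincide. The symmetric argument for $\widehat w, \widehat w'$ against $\mathcal H(\widehat D)$, combined via Lemma~\ref{lem:splitassoc}, yields (2). The implication $(2) \Rightarrow (3)$ is immediate: by Lemma~\ref{lem:associated} the set of $\w$-representatives decomposes as the disjoint union $\bigsqcup_{B \in \B(D)} \{\w\text{-associated lattice points with } B\}$, so matching fibers force matching unions.

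For $(3) \Rightarrow (1)$, I would argue by contrapositive. Suppose $\w \not\equiv \w'$; without loss of generality, $w$ and $w'$ lie in distinct chambers of $\mathcal H(D)$, separated by a hyperplane $H = \spn(\CS)$ with $\rk(S) = r-1$. Extending $S$ to a basis $B = S \cup \{k\}$ by adjoining a column index $k$ whose column is not in $H$, the coefficients $b_k$ and $b_k'$ in the expansions of $w$ and $w'$ in basis $\CB$ have opposite signs. I would then select the unique $w$-associated corner $p$ of $P_1(B)$ from Proposition~\ref{prop:onecorner}, explicitly $p = \sum_{i: b_i < 0} \colB{i}$, which lies on the facet of $P_1(B)$ contained in $H$. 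Pairing $p$ with any $\widehat w$-associated lattice point $\widehat p$ of $P_2(B)$ produces a lattice point $z = (p, \widehat p)$ that is $\w$-associated with $B$ and hence a $\w$-representative. The sign flip of $b_k$ immediately shows $z$ is not $\w'$-associated with $B$, and the task becomes exhibiting $z \notin R_{\w'}$.

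The main obstacle is ruling out that $z$ might be $\w'$-associated with some \emph{other} basis $B^*$, which would keep $z \in R_{\w'}$. The geometric intuition is that $z$ sits on a facet in $H \times \R^{n-r}$ and the direction $\w'$ pushes $z$ to the side of $H$ opposite from $\w$, so $z + \epsilon \w'$ exits $P(B)$ transversally through this facet; if $z$ also lay in some $P(B^*)$ with $\w'$ pushing it into the interior, then by Proposition~\ref{prop:nointersect} $B^*$ would have to share this facet with $B$, forcing a sign conflict in the expansion of $w'$ relative to $B^*$. Making this rigorous is the most delicate step and will likely require a case analysis depending on whether the facet in $H$ is interior to $T(D)$ (bordering another $P(B^*)$) or on the outer boundary of $T(D)$; in the latter case $z \notin R_{\w'}$ is clear, while in the former one must carefully track how the swap $S \cup\{k\} \leftrightarrow S \cup \{k^*\}$ interacts with the sign patterns of $w, w'$ across the relevant bases. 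An appealing fallback, should this case analysis become unwieldy, is to reduce to the lower-dimensional tilings $T'(D), T''(D)$ of Section~\ref{sec:Lower}, where the geometry at boundary lattice points is more readily inspected.
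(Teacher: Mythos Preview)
Your treatment of $(1)\Rightarrow(2)$ and $(2)\Rightarrow(3)$ is correct and essentially the same as the paper's. The difficulty, as you identify, is $(3)\Rightarrow(1)$, and here your sketch has a genuine gap. After producing the point $z=(p,\widehat p)$ that is $\w$-associated with $B$ but not $\w'$-associated with $B$, you must rule out that $z$ is $\w'$-associated with some other basis $B^*$. Your claim that ``by Proposition~\ref{prop:nointersect}, $B^*$ would have to share this facet with $B$'' is not justified: Proposition~\ref{prop:nointersect} only forces $z$ onto the boundary of $P(B^*)$, not onto any particular facet of it, and certainly not onto one contained in $H\times\R^{n-r}$. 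Without that, the promised sign conflict never materializes, and the case analysis you postpone is the entire content of the implication. The fallback to $T'(D)$ or $T''(D)$ does not obviously help, since the same boundary ambiguity persists in the lower-dimensional picture.

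The paper avoids this local analysis with an iterative argument. Starting from a basis $B_1$ at which the $\w$- and $\w'$-associated corner points differ, let $z_1$ be the $\w$-associated corner of $P(B_1)$. If $z_1$ is not a $\w'$-representative, we are done; otherwise $z_1$ is $\w'$-associated with some $B_2\neq B_1$, and one lets $z_2$ be the $\w$-associated corner of $P(B_2)$, then iterates. The key observation, drawn from Proposition~\ref{prop:onecorner}, is that the $\w$-associated corner of any $P(B)$ extremizes a fixed linear functional (in the direction $\w$) over $P(B)$; since each $z_i$ lies in $P(B_{i+1})$ without being its $\w$-associated corner, this functional is strictly monotone along the sequence $z_1,z_2,\dots$. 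As $\B(D)$ is finite, the bases would eventually repeat, but the $z_i$ cannot, so the process must terminate at some $\w$-representative that fails to be a $\w'$-representative. This monotone-sequence device replaces your facet-sharing case analysis entirely.
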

\begin{proof}
Let $B \in \B(D)$ and $z= (z_1,\dots, z_r) \in \Z^r \cap P_1(B)$. By Lemma~\ref{lem:p1hyp}, $(z_1,\dots, z_r) + \epsilon (w_1,\dots, w_r) \in P_1(B)$ if this sum is between $\spn(\CB \setminus x)$ and $\spn(\CB \setminus x) + x$ for every $x \in \CB$. This holds for sufficiently small $\epsilon>0$ precisely when the following two conditions hold:
\begin{itemize}
    \item For all $x \in \CB$ with $\phi_B(z,x) = 1$, $w$ is on the same side of $\spn(\CB\setminus x)$ as $x$. 
    \item For all $x \in \CB$ with $\phi_B(z,x) = 2$, $w$ is on the opposite side of $\spn(\CB\setminus x)$ as $x$. 
\end{itemize}
These conditions only depend on the chamber of $w$. We can make an analogous statement about when $(\widehat z_1,\dots, \widehat z_{n-r}) + \epsilon (\widehat w_1,\dots, \widehat w_{n-r}) \in P_2(B)$. It follows that 1 implies 2. The fact that 2 implies 3 is immediate. 

Lastly, we need to show that 3 implies 1. We prove the contrapositive. Suppose that $(w,\widehat w)$ and $(w',\widehat w')$ are not equivalent. Without loss of generality, we can assume $w$ and $w'$ are not in the same chamber of $\mathcal H(D)$ (otherwise we could make an analogous argument regarding $\what w$ and $\what w'$. This means that for some $S \subset [n]$ with $\rk(S) = r-1$, $w$ and $w'$ are on opposite sides of $\spn(x_S)$. Choose a $B_1 \in \B(D)$ such that $B_1 \cap S = r-1$ (this is always possible by choosing a maximal independent subset of $S$ and extending to a basis). Then, since $\spn(x_S)$ is the span of a facet of $P_1(B_1)$, $w$ and $w'$ do not associate the same corner point with $B_1$. This also means that $\w$ and $\w'$ do not associate the same corner point with $B_1$. 

let $z_1$ be the corner point of $P(B_1)$ that is $\w$-associated with $B_1$. If $z_1$ is not a $\w'$-representative, we are done. Otherwise, $z_1$ is $\w'$-associated with some basis $B_2 \not= B_1$. Let $z_2$ be the corner point of $P(B_2)$ that is $\w$-associated with $B_2$. Again, if $z_2$ is not a $\w'$-representative, we are done. Otherwise, $z_2$ is $\w'$-associated with some $B_3$. 

By repeating this process, we get a sequence $(z_1,z_2,\dots)$ of points in $\Z^n$ and a sequence $(B_1,B_2,\dots)$ of bases in $\B(D)$ such that for every $i$, $z_i$ is the $\w$-associated corner point of $B_i$ and is $\w'$-associated with $B_{i+1}$. 

If this process terminates, we reach a $\w$-representative that is not a $\w'$-representative and the proposition follows. If the process does not terminate, we get an infinite sequence of $B_i$. Since $\B(D)$ is finite, this sequence must repeat. We will show that this is impossible.

Notice that for each $i$, $z_i$ must be on the boundary of both $P(B_i)$ and $P(B_{i+1})$. Using the ideas from Proposition~\ref{prop:onecorner}, the corner point of $P(B)$ is the minimum value of $p \cdot \w$ over all points of $P(B)$. This means that $z_i \cdot \w$ must be an increasing sequence, and it is impossible for the sequence of $z_i$'s to repeat. 
\end{proof}

Proposition~\ref{prop:tfaeshifthyp} implies that if $(w,\widehat w)$ and $(w',\widehat w')$ are equivalent, then $f_{(w,\widehat w)}$ and $f_{(w',\widehat w')}$ are equivalent multijections. However, the converse does not quite hold in general. For example, consider the matrix:
\[\D = \begin{pmatrix} 1 & 1 \\ -1 & 1 \end{pmatrix}.\]
The shifting vectors $(w,\widehat w) = (1,1)$ and $(w',\widehat w') = (-1,-1)$ are not equivalent, but they do induce the same sandpile multijection. 

From Proposition~\ref{prop:tfaeshifthyp}, the number of classes of equivalent shifting vectors is equal to the number of chambers of $\mathcal H(D)$ multiplied by the number of chambers of $\mathcal H(\widehat D)$. This quantity is known to depend only on the \textit{oriented matroid} represented by $D$ (not depending on basis multiplicities) and can be calculated using Zaslavsky's Theorem (see~\cite{Zaslavsky}). 
\begin{rema}
Each equivalence class of shifting vectors can be associated with a choice of \textit{acyclic circuit and cocircuit signatures} (see~\cite[Section 8.2]{mythesis}). These are what Backman, Baker, and Yuen use to define their bijections when restricting to regular matroids. 
\end{rema}

\section{Corner Points as $\{0,1\}^n$ Vectors}\label{sec:cornp}

Let $D$ be a standard representative matrix and $(w,\widehat w)$ be a shifting vector. We showed in Proposition~\ref{prop:onecorner} that there is a unique \textit{corner point} of $P_1(B)$ that is \textit{$w$-associated} with $B$ and a unique corner point of $P_2(B)$ that is \textit{$\widehat w$-associated with $B$}. Using ideas from the proof of Proposition~\ref{prop:onecorner}, we can explicitly construct this corner point. We can also construct a $\{0,1\}^n$ vector that is in the same sandpile group equivalence class as this corner point. 

Let $\CB = \{\colB 1,\dots, \colB r\}$ be the columns of $D$ corresponding to $B$ and $\hCB = \{\hcolB 1, \dots, \hcolB {n-r}\}$ be the columns of $\widehat D$ corresponding to $\widehat B = E \setminus B$. Because $B$ is a basis, $\begin{pmatrix} \colB 1 \cdots \colB r \end{pmatrix}$ and $\begin{pmatrix} \hcolB 1 \cdots \hcolB {n-r} \end{pmatrix}$ are both invertible matrices. It follows that there is a unique vector $a = (a_{k_1},\dots,a_{k_r})$ such that $\begin{pmatrix} \colB 1\cdots \colB r \end{pmatrix} a^T = w^T$ and a unique vector $\widehat a = (\widehat a_{\widehat k_1}, \dots, \widehat a_{\widehat k_{n-r}})$ such that $\begin{pmatrix} \hcolB 1\hdots \hcolB {n-r} \end{pmatrix} \widehat a^T = \widehat w^T$. The shifting vector condition tells us that for all $i\in [n]$, $a_i \not= 0$ and $\widehat a_i \not= 0$. 

Let $v\in\Z^r$ be the sum,
\[\sum_{\{i \in [r]~\mid~a_{k_i} >0\}} \colB i,\]
and let $\widehat v \in \Z^{n-r}$ be the sum,
\[\sum_{\{i \in [n-r]~\mid~\hat a_{\hat k_i} >0\}} \hcolB i.\]
Let $\tilde p_{(B,(w,\widehat w))}$ be the concatenation $(v,\widehat v) \in \Z^n$. The following lemma is immediate from the proof of Proposition~\ref{prop:onecorner}.

\begin{lemma}
$\tilde p_{(B,(w,\widehat w))}$ is the unique corner point of $P(B)$ that is $(w,\widehat w)$-associated with $B$. 
\end{lemma}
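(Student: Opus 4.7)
The plan is to reduce the statement to two symmetric one-sided claims, one about $P_1(B)$ and one about $P_2(B)$, and then extract both from the analysis already performed inside the proof of Proposition~\ref{prop:onecorner}.

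First I would apply Lemma~\ref{lem:splitassoc}: the concatenation $(v,\widehat v)$ is $(w,\widehat w)$-associated with $B$ if and only if $v$ is $w$-associated with $B$ (in $\R^r$) and $\widehat v$ is $\widehat w$-associated with $B$ (in $\R^{n-r}$). Combining this with the definition of a corner point of $P(B)$ as a lattice vector whose first $r$ coordinates form a corner point of $P_1(B)$ and whose last $n-r$ coordinates form a corner point of $P_2(B)$, the claim reduces to showing that $v$ is the unique corner point of $P_1(B)$ that is $w$-associated with $B$ and that $\widehat v$ is the unique corner point of $P_2(B)$ that is $\widehat w$-associated with $B$. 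The two statements are entirely symmetric under swapping $D$, $B$, $w$ with $\widehat D$, $\widehat B$, $\widehat w$, so I would treat only the first and remark that the second follows by the same argument.

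For the $P_1(B)$ statement, I would retrace the argument from Proposition~\ref{prop:onecorner}. Writing a generic point of $P_1(B)$ as $z = \sum_{i=1}^r c_i \colB i$ with $c_i \in [0,1]$ and expanding $w = \sum_{i=1}^r a_{k_i}\colB i$, we get $z+\epsilon w = \sum_{i=1}^r (c_i + \epsilon a_{k_i})\colB i$. The condition that $z+\epsilon w \in P_1(B)$ for all sufficiently small $\epsilon>0$ then forces, in each coordinate, a specific half-open range for $c_i$ dictated by the sign of $a_{k_i}$ (the dichotomy is legitimate since $a_{k_i}\neq 0$ for all $i$ by the shifting-vector hypothesis together with Lemma~\ref{lem:shiftnothyp}). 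Demanding in addition that $z$ be a corner point, i.e. $c_i \in \{0,1\}$, pins down each $c_i$ uniquely from the sign of $a_{k_i}$. Summing the columns selected by that sign rule gives exactly the explicit expression for $v$ in the lemma, and uniqueness is immediate from the case analysis.

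Integrality of $v$ is automatic since each $\colB i$ is a column of the integer matrix $D$, and similarly $\widehat v\in\Z^{n-r}$, so the concatenation $\tilde p_{(B,(w,\widehat w))}$ indeed lies in $\Z^n$. I do not expect a substantive obstacle: the real content of the lemma is already embedded in Proposition~\ref{prop:onecorner}, and what remains is bookkeeping, namely reading off the unique corner in coordinates and checking that the splitting in Lemma~\ref{lem:splitassoc} lets one treat $w$ and $\widehat w$ independently. The only point requiring care is matching the direction in which the shifting vector pushes the corner inward to the correct sign convention on $a_{k_i}$, but this is exactly the content of the case analysis already carried out in Proposition~\ref{prop:onecorner}.
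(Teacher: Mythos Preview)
Your approach is correct and is exactly the paper's: the paper's entire proof is the single sentence ``immediate from the proof of Proposition~\ref{prop:onecorner},'' and you have unpacked precisely that, using Lemma~\ref{lem:splitassoc} to decouple the $P_1$ and $P_2$ factors and then reading off the unique admissible corner from the sign analysis in Proposition~\ref{prop:onecorner}.

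One word of caution on the sign bookkeeping you flag at the end. The case analysis in Proposition~\ref{prop:onecorner} yields $c_i=1$ when the coefficient of $\colB i$ in $w$ is \emph{negative} and $c_i=0$ when it is positive, so the $w$-associated corner of $P_1(B)$ is $\sum_{a_{k_i}<0}\colB i$. The displayed formula for $v$ just above the lemma sums instead over $a_{k_i}>0$; this appears to be a sign slip in the paper (note that the companion formula for $p_{(B,(w,\widehat w))}$ immediately afterward uses the opposite, correct, convention, and Example~\ref{ex:fave2} also confirms the ``$<0$'' reading). So if your careful sign matching disagrees with the printed sum, trust your analysis rather than the displayed inequality.
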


We also construct the following point in $\{0,1\}^n$ which we call $p_{(B,(w,\what w))}$. 
\[\text{The } i^{th} \text{ entry of }p_{(B,(w,\what w))}= \begin{cases}   0 & \text{if } i \in B \text{ and } a_i > 0,\\
& \text{or if } i \not\in B \text{ and } \widehat a_i > 0.\\
1 & \text{if } i \in B \text{ and } a_i < 0,\\
& \text{or if } i \not\in B \text{ and } \widehat a_i < 0.
\end{cases}\]

\begin{prop}\label{prop:cornerzeroone}
$p_{(B,(w,\what w))}$ and $\tilde p_{(B,(w,\what w))}$ are in the same sandpile equivalence class. 
\end{prop}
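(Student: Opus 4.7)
The plan is to apply Lemma~\ref{lem:whenequiv}, which reduces the claim to finding an integer vector $s \in \Z^n$ with $\mathbf D^T s = p_{(B,(w,\widehat w))} - \tilde p_{(B,(w,\widehat w))}$; exhibiting such an $s$ shows that the difference lies in $\Ima_\Z(\mathbf D^T)$, so the two vectors determine the same class in $\Sand(D) = \Z^n/\Ima_\Z(\mathbf D^T)$.

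I would first rewrite both points compactly in terms of indicator vectors. Let $\alpha \in \{0,1\}^n$ be the indicator of $\{i \in B : a_i < 0\}$ and $\widehat\alpha \in \{0,1\}^n$ the indicator of $\{i \in \widehat B : \widehat a_i < 0\}$. Then by construction $p_{(B,(w,\widehat w))} = \alpha + \widehat\alpha$, and the block concatenation defining $\tilde p_{(B,(w,\widehat w))}$ becomes $(D\alpha,\ \widehat D\widehat\alpha)\in\Z^r\oplus\Z^{n-r}$, because the sums defining $v$ and $\widehat v$ are precisely $D$ (respectively $\widehat D$) applied to the supports of $\alpha$ (respectively $\widehat\alpha$) viewed inside $\Z^n$.

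The proposed witness is the vector $s \in \Z^n$ whose first $r$ coordinates equal $\widehat\alpha_{[r]}$ and whose last $n-r$ coordinates equal $\alpha_{[r+1:n]}$; this is integer because $\alpha$ and $\widehat\alpha$ are both $\{0,1\}$-vectors. Using the block decomposition $\mathbf D^T = (D^T \mid \widehat D^T)$ together with the explicit forms $D = (I_r \mid M)$ and $\widehat D = (-M^T \mid I_{n-r})$, one computes directly that
\[
\mathbf D^T s = \bigl(\, s_{[r]} - M\, s_{[r+1:n]},\ M^T s_{[r]} + s_{[r+1:n]}\,\bigr).
\]
For the first block I substitute the ansatz and use $D\alpha = \alpha_{[r]} + M\alpha_{[r+1:n]}$, so the desired first-block difference $(\alpha+\widehat\alpha)_{[r]} - D\alpha$ telescopes to $\widehat\alpha_{[r]} - M\alpha_{[r+1:n]}$, which is exactly $s_{[r]} - Ms_{[r+1:n]}$. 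A mirror-image calculation, using $\widehat D \widehat\alpha = -M^T \widehat\alpha_{[r]} + \widehat\alpha_{[r+1:n]}$, checks the second block.

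The reason this particularly simple choice of $s$ succeeds is that $\alpha$ and $\widehat\alpha$ have disjoint supports ($B$ versus $\widehat B = [n]\setminus B$): once everything is split into the $[r]$ and $[r+1:n]$ blocks, the would-be cross-terms coming from the "wrong" half of the shifting-vector data vanish on the nose. I expect the only real obstacle to be the combinatorial bookkeeping of the four index strata $(B/\widehat B)\times([r]/[r+1:n])$ and keeping signs correct through the $-M^T$ block of $\widehat D$; after the ansatz for $s$ is guessed, the remainder is a routine matrix identity.
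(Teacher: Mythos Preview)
Your proof is correct and takes essentially the same approach as the paper: both arguments show that $p_{(B,(w,\widehat w))}-\tilde p_{(B,(w,\widehat w))}\in\Ima_\Z(\mathbf D^T)$ by exhibiting the integer combination of rows of $\mathbf D$ needed. The paper does this procedurally (for each column $x_{k_i}$ with $k_i>r$ appearing in $v$, add the $k_i$th row of $\mathbf D$; dually for $\widehat v$), whereas you package the same row-additions into a single witness vector $s=(\widehat\alpha_{[r]},\alpha_{[r+1:n]})$ and verify $\mathbf D^Ts=p-\tilde p$ by block matrix algebra. Your $s$ is exactly the indicator of the rows the paper's procedure adds, so the two arguments are the same in content.

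One small caution: your identification $\tilde p=(D\alpha,\widehat D\widehat\alpha)$ with $\alpha$ the indicator of $\{i\in B:a_i<0\}$ agrees with the description of the $\w$-associated corner point in Proposition~\ref{prop:onecorner} and with the definition of $p_{(B,(w,\widehat w))}$, but the displayed formula for $v$ just above the proposition sums over $a_{k_i}>0$. This is a sign slip in the paper's setup rather than in your argument; with the convention that makes the lemma preceding Proposition~\ref{prop:cornerzeroone} true, your computation goes through verbatim.
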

\begin{proof}
In the construction of $\tilde p_{(B,(w,\widehat w))}$, when we add $\colB i$ for $i \le r$, this adds $1$ to the $i^{th}$ coordinate. When we add $\colB i$ for $i > r$, we can subsequently add the $i^{th}$ row of $\D$ without changing the equivalence class of $\Sand(D)$. The net effect is that we add $1$ to the $i^{th}$ coordinate. Similarly, when we add $\hcolB i$ for $i > r$, this adds $1$ to the $i^{th}$ coordinate. When we add $\hcolB i$ for $i\le r$, we can subsequently add the $i^{th}$ row of $\D$ and the net effect is that we add $1$ to the $i^{th}$ coordinate. This procedure adds rows of $\D$ to $\tilde p_{(B,(w,\what w))}$ and produces the point $p_{(B,(w,\widehat w))}$.
\end{proof}

\begin{coro}
Let $B \in \B(D)$ and $z$ be a $\{0,1\}^n$ vector. There is a choice of shifting vector $\w$ such that the corner point of $P(B)$ that is $\w$-associated to $B$ is equivalent to $z$ with respect to $\Sand(D)$. 
\end{coro}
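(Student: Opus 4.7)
The plan is to reverse-engineer the shifting vector $\w$ from the target vector $z$. By Proposition~\ref{prop:cornerzeroone}, the corner point $\tilde p_{(B,\w)}$ that is $\w$-associated to $B$ is equivalent in $\Sand(D)$ to the $\{0,1\}^n$ vector $p_{(B,\w)}$. So it suffices to construct a shifting vector $\w = (w,\widehat w)$ for which $p_{(B,\w)} = z$.

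Unpacking the definition of $p_{(B,\w)}$, its $i$-th coordinate equals $0$ exactly when the relevant coefficient (either $a_i$ for $i \in B$, or $\widehat a_i$ for $i \notin B$) is positive, and equals $1$ exactly when that coefficient is negative. Hence matching $p_{(B,\w)}$ to $z$ is equivalent to prescribing a specific nonzero sign for each $a_i$ (for $i\in B$) and each $\widehat a_i$ (for $i \notin B$), where the sign is $+$ when $z_i=0$ and $-$ when $z_i=1$.

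The task then reduces to realizing any collection of signs by some $w$ and $\widehat w$. Here we use that $\CB = \{\colB 1,\dots,\colB r\}$ is a basis of $\R^r$ (because $B \in \B(D)$) and $\hCB = \{\hcolB 1,\dots,\hcolB{n-r}\}$ is a basis of $\R^{n-r}$ (because $P_2(B)$ has positive volume $m(B)$ by Lemma~\ref{lem:pipedvol}). Thus the maps $a \mapsto \sum_i a_{k_i}\colB i$ and $\widehat a \mapsto \sum_i \widehat a_{\widehat k_i}\hcolB i$ are linear isomorphisms, and the set of $w$ whose $\CB$-coordinates have any prescribed signs is a nonempty open ``orthant'' in $\R^r$; likewise for $\widehat w$ in $\R^{n-r}$.

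The only subtlety, and the main potential obstacle, is ensuring that the resulting $\w$ is actually a shifting vector. By Lemma~\ref{lem:shiftnothyp}, this requires $w \notin \bigcup \mathcal H(D)$ and $\widehat w \notin \bigcup \mathcal H(\widehat D)$. But each of these unions is a finite union of hyperplanes and hence a set of Lebesgue measure zero, while the open orthants described above have positive measure. Picking $a$ and $\widehat a$ sufficiently generically (among vectors with the prescribed signs) produces $w$ and $\widehat w$ avoiding the respective hyperplane arrangements. For such a generic $\w$, the construction gives $p_{(B,\w)} = z$ by design, and Proposition~\ref{prop:cornerzeroone} completes the argument.
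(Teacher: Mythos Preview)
Your argument is correct and follows essentially the same route as the paper: choose the sign pattern of the coefficient vectors $a$ and $\widehat a$ to force $p_{(B,\w)} = z$, then perturb within that open orthant to avoid the measure-zero hyperplane arrangement so that $\w$ is a genuine shifting vector, and conclude via Proposition~\ref{prop:cornerzeroone}. Your write-up is in fact more explicit than the paper's sketch (e.g.\ invoking Lemma~\ref{lem:pipedvol} to justify that $\hCB$ is a basis of $\R^{n-r}$ and Lemma~\ref{lem:shiftnothyp} for the shifting-vector condition).
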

\begin{proof}
We can choose almost any $a\in \R^r$ and $\hat a \in \R^{n-r}$ that satisfy the correct sign pattern such that $p_{(B,\w)} = z$. The only restriction is that we need to make sure that $\w$ is not in the span of any facet, but these exceptions form a set of measure $0$. We can always convert to a shifting vector without affecting the sign pattern of $a$ or $\hat a$ (since we already require these vectors to contain no zeros). 
\end{proof}

\begin{rema}
Consider the $r$-dimensional \textit{zonotope} $Z_D$ formed by the \textit{Minkowski sum} of the columns of $D$. Every $\{0,1\}^n$ vector $z$ is associated with the vertex $D\cdot z^T$. It follows that for every $B \in \mathcal B(D)$, the point $D\cdot p_{(B,(w,\what w))}^T$ is inside of $Z_D$. In \cite{BBY}, the authors use a \textit{zonotopal tiling} argument to show that each $p_{(B,(w,\what w))}$ is in a different equivalence class of $\Sand(D)$. Proposition~\ref{prop:cornerzeroone} (along with results from Section~\ref{sec:multi}) gives an alternative proof of this fact. 
\end{rema}

\section{Further Questions}\label{sec:quest}

The main purpose of our map was to associate each equivalence class of the sandpile group to a basis. However, in constructing this map, we also give a representative for each equivalence class. In particular, this is the set of $\w$-representatives.

\begin{question}
What are some properties of the $\w$-representatives that we get from different choices of distinguished basis or shifting vector? Are they generalizations of any known sets of representatives of the graphical sandpile group (such as superstable or critical configurations)? What about the lower dimensional representatives from Section~\ref{sec:Lower}?
\end{question}

In~\cite[Chapter 9]{mythesis}, the multijections in this paper are generalized to a larger class of objects. However, the sandpile group must be replaced with its \textit{Pontryagin dual}. Note that the Pontryagin dual of the cokernel of a lattice generated by the rows of a matrix is the cokernel of the lattice generated by its columns.  In the case of standard representative matrices, the sandpile group is canonically isomorphic to its Pontryagin dual. In general, the the groups are isomorphic, but these isomorphisms are non-canonical. 

\begin{question}\label{q:canonlattice}
What are some properties of this Pontryagin dual sandpile group and why does it allow for more natural multijections?
\end{question}

In this paper, we focus on standard representative matrices, but the ideas can naturally be restated in terms of \textit{representable arithmetic matroids} (more precisely \textit{orientable arithmetic matroids with the strong GCD property}) which is the framework used in~\cite{mythesis}. However, it is essential for our definition that these matroids are \textit{representable}.
\begin{question}
Is there a reasonable way to define the sandpile group of some class of non-representable matroids?
\end{question}

\longthanks{
The author would like to thank Matthew Baker, Seth Chaiken, Galen Dorpalen-Barry, Caroline Klivans, Giovanni Inchiostro, Chi Ho Yuen, and the anonymous reviewers for useful conversation, comments, and suggestions. }

\bibliographystyle{amsplain-ac}
\bibliography{sample}
\end{document}